\documentclass{amsart}
\usepackage [latin1]{inputenc}
\usepackage{categorytheory,amssymb,graphicx, comment}
\usepackage{todonotes}
\usepackage{amsaddr}
\usepackage[all]{xy}
\newcommand{\dotp}{{{\raisebox{.2ex}{\scalebox{.5}{$\bullet$}}}}}

\fullpage

\newcommand{\Var}{\mathbf{Var}}
\newcommand{\B}{\mathcal{B}}
\newcommand{\A}{\mathcal{A}}
\DeclareMathOperator{\Ar}{Ar}

\newcommand{\F}{\mathcal{F}}
\newcommand{\mc}{\mathcal}
\newcommand{\W}{\mathcal{W}}
\newcommand{\sH}{\mathscr{H}}

\newcommand{\Cat}{\mathbf{Cat}}
\newcommand{\Sch}{\mathbf{Sch}}
\newcommand{\bs}{\backslash}
\newcommand{\mbf}{\mathbf}
\newcommand{\I}{\mathcal{I}}
\newcommand{\J}{\mathcal{J}}

\DeclareMathOperator{\Mod}{Mod}
\DeclareMathOperator{\SC}{SC}

\renewcommand{\emph}{\textsl}

\usetikzlibrary{arrows.meta}
\usetikzlibrary{cd} 
\usetikzlibrary{decorations.markings, decorations.pathmorphing, shapes}

\tikzset{
  mmor/.style={angle 45 reversed->},
  backmmor/.style={<-angle 45 reversed},
  emor/.style={->, decoration={markings,%
      mark=at position 2pt with {\node[circle, draw=black, fill=white,%
        inner sep=1.2pt] {};}},%
    postaction=decorate},
  backemor/.style={<-,decoration={markings,%
      mark=at position -2pt with {\node[circle, draw=black, fill=white,%
        inner sep=1.2pt] {};}},%
    postaction=decorate},
  Dmor/.style={decoration={markings,%
      mark=at position 0.5 with {\node[circle, draw=black, fill=black,%
        inner sep=1pt] {};}},%
    postaction=decorate},
  smor/.style={densely dotted
  },
}

\def\mrto{\inlineArrow{mmor}}
\def\mlto{\inlineArrow{backmmor}}
\def\erto{\inlineArrow{emor}}
\def\elto{\inlineArrow{backemor}}
\def\eDlto{\inlineArrow{{backemor,Dmor}}}
\def\eDrto{\inlineArrow{{emor,Dmor}}}
\def\mDlto{\inlineArrow{{backmmor,Dmor}}}
\def\msrto{\inlineArrow{{mmor,smor}}}
\def\srto{\inlineArrow{{smor,->}}}
\def\esrto{\inlineArrow{{smor,emor}}}

\def\mDrto{\inlineArrow{{mmor,Dmor}}}
\def\mto{\diagArrow{mmor}}
\def\mDrto{\inlineArrow{{mmor,Dmor}}}
\def\eto{\diagArrow{emor}}
\def\eDto{\diagArrow{emor,Dmor}}
\def\mDto{\diagArrow{mmor,Dmor}}
\newcommand{\labar}[3]{\diagArrow{-,color=white}{#1}{#2}!{\textcolor{black}{#3}}}
\newcommand{\dist}[2]{\labar{#1}{#2}{\square}}
\newcommand{\comm}[2]{\labar{#1}{#2}{\circlearrowleft}}

\newcommand{\defeq}{\stackrel{\mathrm{def}}=}

\shownotes

\newcommand{\dsqinline}[8]{\begin{inline-diagram}{#1 \pgfmatrixnextcell #2 \\ #3%
      \pgfmatrixnextcell #4 \\};%
    \dist{1-1}{2-2} %
    \mto{1-1}{1-2}^{#5} \mto{2-1}{2-2}^{#8} %
    \eto{1-1}{2-1}_{#6} \eto{1-2}{2-2}^{#7} %
  \end{inline-diagram}}
\newcommand{\dsqdisplay}[8]{\begin{tikzpicture}[baseline]%
      \matrix (m) [matrix of math nodes,row sep=2.6em, column sep=2.8em] { %
        #1 \pgfmatrixnextcell #2 \\ #3 \pgfmatrixnextcell #4 \\}; %
      \dist{1-1}{2-2} %
      \mto{1-1}{1-2}^{#5} %
      \mto{2-1}{2-2}^{#8} \eto{1-1}{2-1}_{#6} \eto{1-2}{2-2}^{#7}%
    \end{tikzpicture}}
\newcommand{\dsq}[8]{\mathchoice%
  {\dsqdisplay{#1}{#2}{#3}{#4}{#5}{#6}{#7}{#8}}%
  { \dsqinline{#1}{#2}{#3}{#4}{#5}{#6}{#7}{#8}}%
  { \dsqinline{#1}{#2}{#3}{#4}{#5}{#6}{#7}{#8}}%
  { \dsqinline{#1}{#2}{#3}{#4}{#5}{#6}{#7}{#8}}%
}
\newcommand{\csqinline}[8]{\begin{inline-diagram}{#1 \pgfmatrixnextcell #2 \\ #3%
      \pgfmatrixnextcell #4 \\};%
    \comm{1-1}{2-2}
    \mto{1-1}{1-2}^{#5} \mto{2-1}{2-2}^{#8} %
    \eto{1-1}{2-1}_{#6} \eto{1-2}{2-2}^{#7} %
  \end{inline-diagram}}
\newcommand{\csqdisplay}[8]{\begin{tikzpicture}[baseline]%
      \matrix (m) [matrix of math nodes,row sep=2.6em, column sep=2.8em] { %
        #1 \pgfmatrixnextcell #2 \\ #3 \pgfmatrixnextcell #4 \\}; %
      \comm{1-1}{2-2}
      \mto{1-1}{1-2}^{#5} %
      \mto{2-1}{2-2}^{#8} \eto{1-1}{2-1}_{#6} \eto{1-2}{2-2}^{#7}%
    \end{tikzpicture}}
\newcommand{\csq}[8]{\mathchoice%
  {\csqdisplay{#1}{#2}{#3}{#4}{#5}{#6}{#7}{#8}}%
  { \csqinline{#1}{#2}{#3}{#4}{#5}{#6}{#7}{#8}}%
  { \csqinline{#1}{#2}{#3}{#4}{#5}{#6}{#7}{#8}}%
  { \csqinline{#1}{#2}{#3}{#4}{#5}{#6}{#7}{#8}}%
}

\title{D\'evissage and Localization for the Grothendieck Spectrum of Varieties}
\author{Jonathan A. Campbell}
\address{Center for Communications Research, La Jolla}
\author{Inna Zakharevich}
\address{Cornell University} 

\begin{document}

\begin{abstract}
  We introduce a new perspective on the $K$-theory of exact categories via the
  notion of a \emph{CGW-category}.  CGW-categories are a generalization of
  exact categories that admit a Quillen $Q$-construction, but which also include
  examples such as finite sets and varieties.  By analyzing Quillen's proofs of
  d\'evissage and localization we define \emph{ACGW-categories}, an analogous
  generalization of abelian categories for which we prove theorems akin to
  d\'evissage and localization.  In particular, although the category of varieties is not
  quite ACGW, the category of reduced schemes of finite type is; applying
  d\'evissage and localization allows us to calculate a filtration on the
  $K$-theory of schemes of finite type.  As an application of this theory we
  construct a comparison map showing that the two authors' definitions of the
  Grothendieck spectrum of varieties are equivalent. 
\end{abstract}
\maketitle

\tableofcontents


\section{Introduction}


On August 16, 1964, Grothendieck wrote to Serre of a conjectured category of
motives.  Such a category (called $\mathbf{M}(k)$) would encode schemes up to
decomposition (by cutting out subvarieties), but would itself be an abelian
category capturing the \emph{cohomological} structures involved.
\begin{quotation}
  The sad truth is that for the moment I do not know how to define the abelian
  category of motives, even though I am beginning to have a rather precise yoga
  for this category. For example, for any prime $\ell\neq p$, there is an exact
  functor $T_\ell$ from $\mathbf{M}(k)$ into the category of finite-dimensional
  vector spaces over $\Q$ on which the pro-group $\mathrm{Gal}(\bar k_i/k_i)_i$
  acts, where $k_i$ runs over subextensions of finite type of $k$ and $\bar k_i$
  is the algebraic closure of $k_i$ in $\bar k$; this functor is faithful but
  not, of course, fully faithful\ldots I will not venture to make any general
  conjecture on the above homomorphism; I simply hope to arrive at an actual
  construction of the category of motives via this kind of heuristic
  considerations, and this seems to me to be an essential part of my ``long run
  program.'' \cite[p 174-175]{grothendieckserre}
\end{quotation}
Grothendieck's letter proposes several other properties of this conjectured
category, and discusses his attempts at the construction.  Since then, there
have been many other approximations to construct this category---for an overview
see, for example, \cite{milne_motives}---but all fall short of the ideal.  

Grothendieck's approach begins with the construction of a ``$K$-group'' of
varieties.  These days, this is known as the Grothendieck ring of varieties,
denoted $K_0 (\Var_k)$.  It is generated by isomorphism classes of
$k$-varieties, $[X]$, subject to the relations that
$[X] = [Z] + [X \setminus Z]$ for closed inclusions $Z \hookrightarrow
X$. Kontsevich, following Drinfeld \cite{kontsevich}, calls this the ring of
``poor man's motives.'' He notes that any reasonable abelian category of
motives, $\mc{M}_k$, will have a map $K_0 (\Var_k) \to K_0 (\mc{M}_k)$. For
example, in \cite[Thm. 4]{gillet_soule}, Gillet and Soul\'e show that there is a
group homomorphism $K_0 (\Var_k) \to K_0 (\mbf{M}_{\sim})$ where $\mbf{M}_{~}$
is the category of (pure) motives associated to the equivalence relation
$\sim$. It is thus useful to understand $K_0 (\Var_k)$ in a deep way in order to
learn more about how motives should work. It is even better to understand how it
behaves in relation to abelian categories.

We move toward such an understanding in this paper. Before doing so, we rephrase
the question. The Grothendieck group of an abelian category is a shadow of the
much richer structure of Quillen's higher algebraic $K$-theory
\cite{quillen}. Thus there should in fact exist a map on higher algebraic
$K$-theory spectra $K(\Var_k) \to K(\mc{M}_k)$ provided that one can define the
objects in the map. It is currently far beyond the state of the art to attempt
to understand the right-hand side. However, the authors separately have come up
with models for the left \cite{campbell, zakharevich_assembler}.  Under these
constructions the category of varieties behaves very similarly to an abelian
category, and one may be tempted to conjecture that from some novel perspective
the category of varieties would ``become'' abelian.

Our goal in this paper is to construct such a perspective. This has the added
benefit of putting all objects of interest on the same footing.  Our perspective
begins with thinking of sequences $Z \hookrightarrow X \leftarrow X\setminus Z$
as our ``exact sequences.''  It turns out that with this perspective one can
execute nearly all constructions that one enjoys in abelian categories: kernels,
cokernels, localizations, etc. The main insight is that we should not think of
these constructions algebraically, but in a kind of diagrammatic calculus, where
one of the arrows points the opposite way that one would expect. Such
diagrammatic calculi are, of course, the foundation of Grothendieck's seminal
Tohoku paper \cite{tohoku}.

While we do not develop the general theory of homological algebra of these types
of categories,\footnote{We hope to develop this in future work; it appears to be
  that homological algebra ends up working almost identically to the classical
  theory.}  one can ask which $K$-theoretic theorems hold. Pondering the
fundamental theorems of Quillen's algebraic $K$-theory, we come to the following
desiderata for the construction of $K$-theories of geometric and algebraic
objects:
\begin{enumerate}
\item The categorical machinery should somehow encompass \textit{both} the
  category of varieties with its ``exact sequences'' defined above, and
  Quillen's exact categories \cite[p.92]{quillen}.
\item D\'evissage should hold: Given an inclusion of categories
  $\mc{A} \subset \mc{B}$ such that everything in $\mc{B}$ can be ``broken up''
  into objects in $\mc{A}$, there should be an equivalence
  $K(\mc{A}) \simeq K(\mc{B})$.
\item Localization should hold: given two such categories
  $\mc{A} \subset \mc{B}$, one should be able to produce a localized category
  $\mc{B} / \mc{A}$ as one can with abelian categories. One would also like a
  localization sequence
  \[
    K(\mc{A}) \to K(\mc{B}) \to K(\mc{B}/\mc{A})
  \]
  as in \cite[Thm. 5]{quillen}.
\end{enumerate}

In this paper we show that there \textit{is} such a categorical structure, and
we are able to satisfy the requirements listed above. Moreover, it has the
correct ``yoga'': we are able to not only make the theorems work, but also
Quillen's original proofs.  Although this does not get us much closer to
understanding the conjectural category of motives, it does provide us with a new
perspective and concrete technical tools. The perspective could be summarized as
follows: varieties, together with the exact sequences above, behave
\textit{almost} like abelian categories and one should work with this structure
for as long as possible before passing to abelian categories. As will be shown
below, this perspective is extremely fruitful when discussing algebraic
$K$-theory, and we expect it to be more useful generally.

The fundamental notion introduced in this paper is that of a
\emph{CGW-category}. It is essentially a category equipped with two subclasses
of maps, $\M$ and $\E$ (to be thought of as analogous to admissible
monomorphisms and admissible epimorphisms in exact categories), together with
distinguished squares that tell us how objects are built. In all examples we
know, the horizontal and vertical morphisms need not compose in the category,
and therefore we situate the classes $\M$ and $\E$ in a double category. With
this minimal amount of data we define $K$-theory following the classical constructions due to Quillen (Sect.~\ref{sect:q_construction}) or Waldhausen
(Sect.~\ref{subdivision}). We show that the resulting $K$-theory spaces have the
correct group of components in Thm.~\ref{thm:pi_0}.  CGW-categories satisfy requirement (1) above: they encompass varieties \emph{and} exact categories.  

Of course, as in the case of exact categories, additional structure is required
to prove these theorems. To this end we introduce the definition of an
ACGW-category, which is meant to be a sort of ``abelian'' version of a
CGW-category.  The category of reduced schemes of finite type is such a
category, with the category of varieties sitting inside it as a full
subcategory.  Roughly, an ACGW-category is a category that formally satisfies
all of the properties that open and closed sets do (the complement of a closed
set is open, you can intersect closed sets and union open sets, etc). Using this
definition we prove the first main theorem of the paper:

\begin{theorem}[D\'evissage]
  Let $\mathcal{A}, \mathcal{B}$ be ACGW-categories with
  $\mathcal{A} \subset \mathcal{B}$ satisfying certain technical
  conditions. Suppose every $B \in \mathcal{B}$ has a finite filtration $B_i$
  such that the difference between $B_i$ and $B_{i-1}$ lies in
  $\mathcal{A}$. Then $K(\mathcal{A}) \simeq K(\mathcal{B})$.
\end{theorem}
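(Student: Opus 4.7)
The plan is to follow Quillen's devissage strategy \cite{quillen} in the ACGW setting. Since CGW-categories admit a $Q$-construction, the inclusion $\mathcal{A} \subset \mathcal{B}$ induces a map $Q\mathcal{A} \to Q\mathcal{B}$, and we must prove this is a weak equivalence; because $K(\mathcal{C}) \simeq \Omega|Q\mathcal{C}|$, such an equivalence yields $K(\mathcal{A}) \simeq K(\mathcal{B})$.

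First I would apply Quillen's Theorem A to this functor: it suffices to show that for every $B \in \mathcal{B}$, the comma category $Q\mathcal{A}/B$, whose objects are spans $A \twoheadleftarrow A' \hookrightarrow B$ with $A, A' \in \mathcal{A}$, is contractible. As in Quillen's proof, I would reduce this to showing that the poset $\mathcal{F}(B)$ of admissible sub-objects $B' \hookrightarrow B$ with $B' \in \mathcal{A}$ (in the ACGW sense) is contractible: the comma category should deformation-retract onto $\mathcal{F}(B)$ via the projection $(A \twoheadleftarrow A' \hookrightarrow B) \mapsto A'$.

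The contractibility of $\mathcal{F}(B)$ then proceeds by induction on the length of the filtration $B_0, B_1, \ldots, B_n = B$ postulated in the statement. The base case is $B \in \mathcal{A}$, where $B$ itself is a maximal element of $\mathcal{F}(B)$ and the poset is immediately contractible. For the inductive step one needs the ACGW analogue of the abelian-category fact that a sub-object of an extension fits into an exact sequence with sub-objects of the sub and of the quotient: an admissible $B' \hookrightarrow B$ must decompose, via a distinguished square, into $B' \cap B_{n-1}$ and the image of $B'$ in the complement $B \setminus B_{n-1}$. Combined with the inductive hypothesis applied to $B_{n-1}$, this lets one contract $\mathcal{F}(B)$ by deforming each $B'$ first to $B' \cap B_{n-1}$ and then to $B_{n-1}$ itself.

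The main obstacle I expect is precisely this inductive step. In an abelian category the required decomposition is an immediate consequence of kernels and cokernels, but in an ACGW-category it must be deduced purely from the axioms on $\mathcal{M}$, $\mathcal{E}$, and the distinguished squares. This is almost certainly what the ``certain technical conditions'' encode: closure of $\mathcal{A}$ under the intersections, complements, and unions implicit in the double-categorical structure of $\mathcal{B}$, together with enough compatibility between the two classes of morphisms to form the relevant pushouts and pullbacks. A secondary technical issue is that the proposed deformation-retraction onto $\mathcal{F}(B)$ involves a zig-zag of functors whose natural transformations must respect both $\mathcal{M}$- and $\mathcal{E}$-morphisms simultaneously; this compatibility is automatic in ordinary category theory but has to be built by hand from the distinguished-square data here.
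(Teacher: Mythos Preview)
Your overall strategy---apply Quillen's Theorem~A to $Q\iota\colon Q\mathcal{A}\to Q\mathcal{B}$ and then induct on the length of the filtration---matches both Quillen's original argument and the paper's. The gap is in your identification of the comma category.

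An object of $Q\iota_{/B}$ is a span $A\leftarrow X\hookrightarrow B$ in $Q\mathcal{B}$ with $A\in\mathcal{A}$; the middle object $X$ lies only in $\mathcal{B}$, not in $\mathcal{A}$. Your projection $(A\leftarrow X\hookrightarrow B)\mapsto X$ therefore does not land in a poset of $\mathcal{A}$-subobjects of $B$, and there is no retraction onto such a poset. The correct reduction---already in Quillen's abelian proof and carried out in the paper via the equivalence $Q\mathcal{B}_{/B}\simeq L_B\mathcal{B}$ of Lemma~\ref{lem:preorder}---is to the preorder $L_B^{\mathcal{A}}\mathcal{B}$ of \emph{layers}: pairs $B_0\hookrightarrow B_1\hookrightarrow B$ whose complement $B_0^{c/B_1}$ lies in $\mathcal{A}$. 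This is all the filtration hypothesis gives you: it says nothing about which subobjects of $B$ lie in $\mathcal{A}$, only that certain successive complements do.

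The error propagates to your inductive step. You want to contract toward $B_{n-1}$, but $B_{n-1}$ is generally not in $\mathcal{A}$ and so is not a point of your $\mathcal{F}(B)$. In the layer picture the step is instead: postcomposition with $h\colon B\hookrightarrow B'$ (with $B^{c}\in\mathcal{A}$) gives $\iota\colon L_{B}^{\mathcal{A}}\mathcal{B}\to L_{B'}^{\mathcal{A}}\mathcal{B}$, and a homotopy inverse $r$ is built by pulling a layer $(B_0,B_1)$ back to $(B_0\times_{B'} B,\,B_1\times_{B'} B)$. The genuinely delicate point---where axioms~(U) and~(S) and the hypothesis that $\mathcal{A}\cap\mathcal{E}\hookrightarrow\mathcal{E}$ creates pushouts are used---is verifying that the intermediate functor $s\colon(B_0,B_1)\mapsto(B_0\times_{B'} B,\,B_1)$ still has complement in $\mathcal{A}$. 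Your instinct that the obstacle is a decomposition across a distinguished square is correct in spirit, but the object being decomposed is a layer, not a single subobject.
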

Here ``difference between'' could mean a quotient or a complement; for the
precise statement see Thm~\ref{thm:devissage}.  The definition of ACGW-category
has a number of requirements, but these requirements are satisfied by the
motivating examples of the category of reduced schemes of finite type, polytopes
\cite{zakharevich_assembler}, finite sets, and abelian categories.

The formal similarities between ACGW-categories and abelian categories suggest
that other theorems in algebraic $K$-theory can be extended to the CGW
case. Quillen's other major tool in algebraic $K$-theory is the localization
theorem, which relates the K-theories of two abelian categories $\mc{A}, \mc{B}$
with the $K$-theory of their quotient category $\mc{B} / \mc{A}$. A very similar
theorem holds for ACGW-categories:
\begin{theorem}[Localization]
  Let $\mc{C}$ be an ACGW category and $\mc{A}$ a sub-ACGW-category of $\mc{C}$
  satisfying certain technical conditions. Then there is a localization
  ACGW-category $\mc{C} \bs \mc{A}$ such that
  \begin{diagram}
    {K(\mc{A}) & K(\mc{C}) & K(\mc{C}\bs \mc{A})\\};
    \to{1-1}{1-2} \to{1-2}{1-3}
  \end{diagram}
  is a homotopy fiber sequence. 
\end{theorem}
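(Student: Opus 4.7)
The plan is to follow the outline of Quillen's classical proof of the localization theorem for abelian categories, adapted to the double-categorical setting of ACGW-categories. First I would construct the quotient $\mc{C}\bs\mc{A}$. Since objects of $\mc{C}$ equipped with the ACGW-structure play the role of objects of an abelian category, I would take $\mathrm{Ob}(\mc{C}\bs\mc{A}) = \mathrm{Ob}(\mc{C})$, and define morphisms in the classes $\M$ and $\E$ as equivalence classes of spans of morphisms in $\mc{C}$ in which the relevant ``complement'' or ``quotient'' piece lies in $\mc{A}$. Concretely, an $\M$-morphism $X \to Y$ in $\mc{C}\bs\mc{A}$ should be represented by a span $X \leftarrow X' \rightarrowtail Y$ where $X' \to X$ is an $\E$-map whose complement is in $\mc{A}$, and dually for $\E$-morphisms. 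I would then define distinguished squares in $\mc{C}\bs\mc{A}$ as images of distinguished squares in $\mc{C}$ and verify closure under the operations that the ACGW axioms demand.

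Next I would verify that $\mc{C}\bs\mc{A}$ is genuinely an ACGW-category, which reduces to checking the existence of pullbacks, pushouts, unions, and intersections in the quotient. The technical conditions on $\mc{A} \subset \mc{C}$ should precisely be what is needed to make these constructions well-defined on equivalence classes of spans, analogous to the Serre subcategory condition in Quillen's setting. At this point the composite $\mc{A} \to \mc{C} \to \mc{C}\bs\mc{A}$ is canonically trivial on $K_0$ because every object of $\mc{A}$ is equivalent to $\emptyset$ in $\mc{C}\bs\mc{A}$.

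The core homotopical step is to identify the homotopy fiber of $K(\mc{C}) \to K(\mc{C}\bs\mc{A})$ with $K(\mc{A})$. Working with the $Q$-construction developed in Section~\ref{sect:q_construction}, I would apply a Theorem-B style argument to the functor $Q\mc{C} \to Q(\mc{C}\bs\mc{A})$. The strategy is to analyze, for each object $X \in \mc{C}\bs\mc{A}$, the comma category (or its ACGW analogue) of objects of $Q\mc{C}$ lying over $X$, and to show two things: that these comma categories are all homotopy equivalent (so Theorem~B applies), and that the fiber over the zero object is naturally equivalent to $Q\mc{A}$. Invariance of the fibers under the maps in $Q(\mc{C}\bs\mc{A})$ should follow from the fact that an $\M$- or $\E$-morphism in the quotient can be represented by a span whose ``defect'' is in $\mc{A}$, yielding an equivalence of fibers by pulling back along that defect.

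The hardest step, which I expect to absorb most of the work, is this last Theorem-B verification. In Quillen's setting the fiber identification uses that a subobject in $\mc{B}/\mc{A}$ can be lifted to a genuine subobject of $\mc{B}$ modulo something in $\mc{A}$; the ACGW analogue requires simultaneously lifting the $\M$-part and the $\E$-part of the data and showing that the distinguished squares lift as well. Making this lifting functorial, and checking that the resulting natural transformation of comma categories is a homotopy equivalence, will rely heavily on the devissage-type filtration hypotheses already used in Thm~\ref{thm:devissage}. Once fiber invariance and fiber identification are both established, Theorem~B delivers the homotopy fiber sequence.
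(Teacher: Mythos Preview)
Your high-level strategy --- apply Quillen's Theorem~B to the functor $Q\C \to Q(\C\bs\A)$, verify fiber invariance, and identify the fiber over $\initial$ with $Q\A$ --- is exactly the paper's approach. However, several of the details you sketch are genuinely off, and some would not work as written.

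First, your description of morphisms in $\C\bs\A$ as spans $X \leftarrow X' \rightarrowtail Y$ is too short. The paper defines an m-morphism as an equivalence class of four-term zigzags $A \eDlto A' \mDlto X \eDrto B' \mrto B$, not two-term spans. Both an $\E$-defect and an $\M$-defect must be absorbed, and the equivalence relation is correspondingly more elaborate (see Appendix~\ref{app:C-A}). A two-term span calculus does not suffice to make composition well-defined.

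Second, you propose to verify that $\C\bs\A$ is an ACGW-category. The paper does not do this and does not claim it: $\C\bs\A$ is only required to be a CGW-category, and even that is taken as a \emph{hypothesis} (condition~(CGW) in Theorem~\ref{thm:qLoc}), verified by hand in the examples rather than proved in general. The paper explicitly remarks that proving this would require a calculus of fractions for double categories, which is out of scope.

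Third, your expectation that the fiber identification ``will rely heavily on the devissage-type filtration hypotheses'' is incorrect. D\'evissage plays no role in the localization proof. The actual technical conditions are (W) ($\A$ is m- and e- well-represented or negligible), (CGW), and (E); these govern how representatives in $\C$ of isomorphisms in $\C\bs\A$ can be chosen compatibly. The fiber analysis then proceeds via an auxiliary category $\sH_N$ for each representative $N$ of $V$, with functors $k_N:\sH_N\to Q\A$ and $P_{(N,\phi)}:\sH_N\to\F_V$, and the proof that $P_{(N,\phi)}$ is an equivalence splits into two separate arguments depending on whether $\A$ is m-negligible or m-well-represented. This machinery is substantially more intricate than ``lifting subobjects modulo something in $\A$,'' and the well-represented case in particular requires showing $\F_V \cong \colim_{\I^m_V} \sH_{(-)}$ over a filtered indexing category.
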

For a more precise statement of this theorem, see Theorem~\ref{thm:qLoc}.  

An interesting observation about the proofs of these theorems is how closely
they follow Quillen's original proofs.  The category of varieties really does
``behave like'' an exact category, in the sense that many of the motions that
are necessary to prove theorems have direct analogs in the category of
varieties.  (In fact, the category of varieties lacks only ``pushouts'' to
behave like an abelian category; this is why switching to reduced schemes of
finite type is necessary.  For more detail on this, see Section~\ref{sec:acgw}.)

We expect there to be substantial applications of the d\'evissage and
localization theorems. The main application that we discuss in this paper is a
comparison of models for the K-theory of varieties that both authors have
constructed. Surprisingly, this theorem seems to use every bit of K-theoretic
machinery the authors have developed: assemblers, cofiber sequences in K-theory,
and the d\'evissage and localization theorems. All combine to give the following
theorem.
\begin{theorem}[Comparison]
  Let $K^C (\Var^n)$ denote the $K$-theory of the SW-category $\Var^n$ defined
  in \cite{campbell}, and let $K^Z(\Var^n)$ denote the $K$-theory of the assembler
  $\Var^n$ defined in \cite{zakharevich_assembler}. Then there is a zig-zag of
  weak equivalences
  \begin{diagram}
    {K^C(\Var^n) & \dotp &  K^Z(\Var^n).\\};
    \we{1-1.mid east}{1-2.mid west} \we{1-3.mid west}{1-2.mid east}
  \end{diagram}
\end{theorem}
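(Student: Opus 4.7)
The plan is to exhibit an intermediate $K$-theory $K(\mc{V}^n_{\mathrm{CGW}})$ built from a CGW-category structure on varieties of dimension at most $n$, obtained by restricting the ACGW-category of reduced schemes of finite type from Section~\ref{sec:acgw}, and to show both $K^C(\Var^n)$ and $K^Z(\V^n)$ are weakly equivalent to it. The zig-zag is then the composition $K^C(\Var^n) \xleftarrow{\sim} K(\mc{V}^n_{\mathrm{CGW}}) \xrightarrow{\sim} K^Z(\V^n)$ (or with the middle equivalences pointing the other way), and the content of the theorem is the construction of those two equivalences.

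For the comparison with Campbell's $K^C$, I would show that the SW-category structure on $\Var^n$ used in \cite{campbell} is extracted from the CGW-category structure: closed immersions, open complements, and the distinguished squares encode precisely the cofiber-sequence data of an SW-category. A structure-preserving comparison functor produces a natural map of simplicial objects, and showing it induces a weak equivalence on $K$-theory reduces to identifying two Waldhausen-style subdivision constructions (the one of Section~\ref{subdivision} and the nerve underlying $K^C$). Once the structures are aligned this should be a formal comparison at the level of the defining simplicial sets.

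For the comparison with Zakharevich's $K^Z$, I would argue by induction on $n$ using the dimension filtration $\Var^0 \subset \Var^1 \subset \cdots \subset \Var^n$. At each stage the localization theorem (Theorem~\ref{thm:qLoc}) produces a fiber sequence
\[
K(\Var^{k-1}) \to K(\Var^k) \to K(\Var^k \bs \Var^{k-1}),
\]
and devissage (Theorem~\ref{thm:devissage}) identifies the rightmost term with the $K$-theory of an ACGW-category supported on purely $k$-dimensional opens. The assembler $\V^n$ of \cite{zakharevich_assembler} admits an analogous dimension-filtration fiber sequence, and matching associated graded pieces at each stage --- with base case $\Var^0$ being essentially finite sets, where all three $K$-theories agree for classical reasons --- completes the induction.

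The main obstacle will be this last comparison: assemblers and ACGW-categories organize open--closed decompositions in superficially quite different combinatorial ways, and aligning the simplicial models at each inductive step is delicate. The comparison is tractable only because both the devissage/localization theorems proved in this paper and their assembler analogues produce the \emph{same} fiber sequences; the genuine technical work is to construct explicit maps between the simplicial models at each filtration layer and then invoke a five-lemma argument on the long exact sequences of homotopy groups to upgrade the comparison at associated graded pieces to one on the full $K$-theory spectra.
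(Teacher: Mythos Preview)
Your overall architecture—induction on dimension, matching fiber sequences, five lemma—is close to the paper's, but there are two genuine gaps.

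First, your proposed intermediate $K(\mc{V}^n_{\mathrm{CGW}})$ is not actually intermediate.  By Theorem~\ref{thm:S.=Q} and Remark~\ref{agreement}, the CGW $Q$-construction on $\Var^n$ already agrees with $K^C(\Var^n)$; so your ``easy'' comparison with $K^C$ collapses to an identification, and the whole zig-zag reduces to constructing a direct comparison $K^C(\Var^n)\simeq K^Z(\V^n)$.  For the five-lemma argument you then need \emph{maps} between the SW/CGW fiber sequences and the assembler fiber sequences, not just the knowledge that both exist.  You flag this as ``the main obstacle,'' but you do not say where such maps come from, and there is no evident functor from the assembler model to the CGW model (or conversely) at the level of simplicial objects.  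The paper's solution is to introduce a genuinely new intermediate: the SW-category $\Var_w$ of Definition~\ref{def:Varw}, whose weak equivalences are the piecewise isomorphisms.  These weak equivalences are exactly what turns an assembler cover into an honest equivalence, so there is a direct functor $\W(\V^{\vee\bullet})\to wS_\bullet\Var_w$ with a levelwise right adjoint (Proposition~\ref{prop:ZtoC}).  The inductive five-lemma argument is then used for the \emph{other} leg, $K^C(\Var^n)\to K^C(\Var_w^n)$, where both sides are SW-categories and the comparison map is the obvious inclusion.

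Second, you propose applying Theorem~\ref{thm:qLoc} to $\Var^{k-1}\subset \Var^k$, but $\Var$ is only pre-ACGW (Axiom~(PP) fails: pushouts of varieties along closed immersions need not be varieties).  The paper applies localization to $\Sch_{rf}^{k-1}\subset \Sch_{rf}^k$ instead, and uses d\'evissage (Example~\ref{ex:var->sch}) to transport the conclusion back to varieties; this is the content of Proposition~\ref{prop:lambda}.  Your sketch mentions both $\Sch_{rf}$ and d\'evissage, but in the wrong roles: d\'evissage is used to pass between $\Var$ and $\Sch_{rf}$, not to simplify the cofiber of the localization sequence.
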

For a more detailed statement of this theorem, see Theorem~\ref{thm:equiv}.

Each of the models constructed has their own strengths, and this theorem allows
us to pass between models to exploit these. We expect a more general theorem
relating Waldhausen-style $K$-theory to assembler style $K$-theory to be true,
but we leave that for future work.

Whether this new perspective leads to a new theory of motives or not is unclear;
however, the striking behavioral similarities between varieties and abelian
categories was too beautiful to leave unexplored.

\medskip
\noindent\textbf{Acknowledgements.}
The authors would like to thank Pierre Deligne, Andr\'{e} Joyal, Andrew
Blumberg, and Charles Weibel for interesting conversations related to this work.
They also thank Daniel Grayson and Karl Schwede for their patience with our
annoying technical questions.  Lastly, the authors would like to thank the
anonymous referee whose comments greatly improved the readability of this
paper. 

During the writing of this work, Campbell was supported by Vanderbilt University, Duke University, and is now employed at the Center for Communications Research, La Jolla. Zakharevich is supported by Cornell University,  NSF DMS-1654522 and NSF CAREER-1846767.

\section{CGW-Categories} \label{sec:cgw}

This section contains the main definition of the paper: the definition of a
CGW-category.  Because exact categories all embed into abelian categories, the
data of exact sequences is defined using universal properties in this abelian
category: if
\begin{diagram}
  {X & Y & Z\\};
  \to{1-1}{1-2}\to{1-2}{1-3}
\end{diagram}
is an exact sequence, $X$ is the kernel of $Y \rto Z$ and $Z$ is the cokernel of
$X \rto Y$.  However, if we instead discard this ``ambient'' abelian category,
and think of the exact sequences as simply data to be manipulated (as per
Quillen's original definition \cite{quillen}), a simple observation comes to
light: there is no intrinsic reason why admissible monomorphisms and admissible
epimorphisms must compose.  It is simply necessary that we encode their
relationships to one another.

An efficient way to encode this kind of structure is using the formalism of double categories. We thus begin by recalling the definition of a double category, as well as
establishing some notation for working with double categories. The notion of
double categories goes back to \cite{ehresmann63}.  We do not include the
complete definition; for the reader interested in a more in-depth introduction,
see for example \cite[Section II.6]{leinster}.


\begin{definition}
  A \emph{double category} $\C$ is an internal category in $\Cat$.  More
  concretely, a double category consists of a pair of categories, denoted
  $\E_\C$ and $\M_\C$, which have the same objects.  We denote morphisms in
  $\M_\C$ by $\mrto$ and morphisms in $\E_\C$ by $\erto$.  This pair is endowed
  with a collection of squares, called \emph{distinguished squares}.  These are
  denoted
  \[\dsq ABCD{f'}{g'}{g}{f}.\]
  In each distinguished square, $f,f'\in \M_\C$ and $g,g'\in \E_\C$.  The squares
  satisfy compositional axioms, which say in effect that gluing two squares
  horizontally or vertically gives another distinguished square.  In addition,
  if $f$ and $f'$ are both isomorphisms then for any $g,g'$ either both of the
  following squares exist, or neither does: 
  \[\dsq{A}{B}{C}{D}{f}{g}{g'}{f'} \qquad \dsq{B}{A}{D}{C}{f^{-1}}{g'}{g}{{f'}^{-1}}\]


  We  sometimes write $\C = (\E_\C,\M_\C)$.  When $\C$ is clear from context
  we  omit the subscripts from the notation.
\end{definition}

\begin{example} \label{ex:ambient}
  Let $\A$ be any category, and $\E$ and $\M$ two subcategories containing all
  isomorphisms in $\A$.  We can define
  a double category structure $(\E,\M)$ by letting the objects be the objects of
  $\C$, the horizontal morphisms be given by $\M$ and the vertical morphisms by
  $\E$.  We let distinguished squares be any subset of the commutative squares
  in $\A$ which satisfies appropriate closure conditions.
\end{example}

In most cases of interest, the double categories we work with arise as in Example~\ref{ex:ambient}, so it is useful to introduce language for these categories. 

\begin{definition}
  If a double category $(\E,\M)$ arises from a situation as in
  Example~\ref{ex:ambient}, we say that $\A$ is an \emph{ambient} category for
  $(\E,\M)$.  In such a case the identity functor gives a natural isomorphism of
  categories $\iso \E \rto \iso \M$.
\end{definition}

CGW-categories will be double categories equipped with extra data. Most of the data involves the specification of the existence of certain distinguished squares. We define certain categories that come up repeatedly in these specifications. 

\begin{definition}
  Let $\C = (\E, \M)$ be a double category.  We write $\Ar_\square\E$ for the
  category whose objects are morphisms $A \erto B$ in $\E$, and where 
  \[\Hom_{\Ar_\square\E}(\makeshort{A \erto^{g} B, A'\erto^{g'} B'}) =
  \left\{\begin{tabular}{c}
    \hbox{distinguished} \\  \hbox{squares}
    \end{tabular}
    \hbox{$\dsq{A}{A'}{B}{B'}{}{g}{g'}{}$}\right\}\] We have an analogous
category $\Ar_\square\M$.  Every $2$-cell in $\C$ appears uniquely as a morphism
in $\Ar_\square \E$ and $\Ar_\square\M$.

  Now let $\D$ be any ordinary category.  We write $\Ar_\triangle\D$ for the
  category whose objects are morphisms $A \rto B$ in $\D$, and where 
  \[\Hom_{\Ar_\triangle\D}(\makeshort{A \rto^f B, A' \rto^{f'} B'})  =
    \left\{
      \begin{tabular}{c}
        commutative \\ squares
      \end{tabular}
      \begin{inline-diagram}
        {A & A' \\ B & B'\\};
        \arrowsquare{{\cong}}{f}{f'}{}
      \end{inline-diagram}
      \right\}.\]
\end{definition}

We now come to the definition of a CGW-category. 

\begin{definition}
  A \emph{CGW-category} $(\C,\phi,c,k)$ is a double category $\C = (\E,\M)$, an
  isomorphism of categories $\phi:\iso \M \rto \iso \E$ which is the identity on
  objects, and equivalences of categories
  \[k:\Ar_\square \E \rto \Ar_\triangle \M \qqand c:\Ar_\square \M \rto
  \Ar_\triangle \E\] which satisfy:
  \begin{itemize}
  \item[(Z)] $\C$ contains an object $\initial$ which is initial in both $\E$
    and $\M$.
  \item[(I)] If $f: A \mrto B$ is any isomorphism in $\M$ then all four of the
    following squares are distinguished:
    \[\dsq ABBBf{\phi(f)}{1_B}{1_B} \qquad \dsq AAAB{1_A}{1_A}{\phi(f)}f \qquad 
      \dsq ABAAf{1_A}{\phi(f^{-1})}{1_A} \qquad \dsq AABA{1_A}{\phi(f)}{1_A}{f^{-1}}.\]
  \item[(M)] Every morphism in the categories $\E$ and $\M$ is monic.
  \item[(K)] For every $g:A \erto B$ in $\E$, the codomains of $g$ and $k(g)$
    are equal.  We write $k(g)$ as $A^{k/g} \mrto^{g^k} B$.  There exists a
    (unique up to unique isomorphism) distinguished square
    \[\dsq{\initial}{A}{A^{k/g}}{B}{}{}{g}{g^k}.\]
    Dually, for every $f:A \mrto B$ in $\M$ the codomains of $f$ and $c(f)$ are
    equal; we write  $c(\makeshort{A \mrto^f B}) =
    A^{c/f} \erto^{f^c} B$. There exists a (unique up to unique isomorphism)
    distinguished square
    \[\dsq{\initial}{A^{c/f}}{A}{B}{}{}{f^c}{f}.\]
  \item[(A)] For any objects $A$ and $B$ there exist distinguished squares
    \[\dsq{\initial}{A}{B}{X}{}{}{}{} \qqand
      \dsq{\initial}{B}{A}{X}{}{}{}{}.\] 
  \end{itemize}
  As isomorphisms can be considered to be ``both e-morphisms and m-morphisms''
  we will generally draw them as plain arrows.

  When it is clear from context, we write $A^{k/B}$ or $A^k$ instead of
  $A^{k/g}$ (and analogously for $c$).  When $\phi$, $c$ and $k$ are clear from
  context we omit them from the notation.  When $\C$ has an ambient category
  $\A$ and $\phi$ is the identity functor, we omit $\phi$ from the notation.
\end{definition}

The definition of a CGW-category is symmetric with respect to m-morphisms and
e-morphisms.  This duality is highly versatile and allows us to get symmetric
results about e-morphisms and m-morphisms with no extra work. 

\begin{remark}
  Axiom (A) is used only to show that $K_0(\C)$ is an abelian group.  Thus if in
  some case such a property is not necessary this axiom can be dropped and the
  rest of the analysis will still hold.
\end{remark}

Functors of CGW-categories must preserve all structure in sight. 

\begin{definition}
  A \emph{CGW-functor} of CGW-categories is a double functor $F:(\E,\M) \rto (\E',\M')$ which
  commutes with $c$ and $k$.  More concretely, $F$ is a CGW-functor if the
  following two diagrams commute:
  \begin{diagram}
    { \Ar_\square \E & \Ar_\triangle \M  & \qquad\qquad & \Ar_\square \M &
      \Ar_\triangle \E \\
      \Ar_\square \E' & \Ar_\triangle \M' && \Ar_\square \M' & \Ar_\triangle
      \E'\\};
    \to{1-1}{1-2}^k \to{1-4}{1-5}^c 
    \to{2-1}{2-2}^{k'} \to{2-4}{2-5}^{c'}
    \to{1-1}{2-1}_{\Ar_\square F} \to{1-2}{2-2}^{\Ar_\triangle F} 
    \to{1-4}{2-4}_{\Ar_\square F} \to{1-5}{2-5}^{\Ar_\triangle F}
  \end{diagram}
\end{definition}

The fact that $c$ and $k$ take distinguished squares to commutative triangles
means that distinguished squares are \emph{equifibered} (the vertical arrows
have equal ``kernels'' given by $k$) and \emph{equicofibered} (the horizontal
arrows have equal ``cokernels'' given by $c$).  By Axiom (K), $c$ and $k$ are
mutual inverses on objects.

We now prove some technical consequences of the axioms.

\begin{lemma}
  For any $A$, the morphism $f: \initial \mrto A$ has $f^c = 1_A$.
  Dually, the morphism $f:\initial \erto A$ has $f^k = 1_A$. 
\end{lemma}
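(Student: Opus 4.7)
The plan is to invoke the uniqueness clause of axiom (K) against the horizontal (respectively vertical) identity $2$-cell provided by the double-category structure of $\C$.

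For the first statement, let $f : \initial \mrto A$. In any double category, every horizontal $1$-morphism carries a distinguished horizontal identity $2$-cell with vertical identities on both sides. For $f$ this is the distinguished square
\[\dsq{\initial}{A}{\initial}{A}{f}{1_\initial}{1_A}{f},\]
in which $1_\initial$ and $1_A$ are read as e-morphisms via the convention that identities (being isomorphisms) belong to both $\M$ and $\E$. This square has exactly the shape of the distinguished square that axiom (K) attaches to the m-morphism $f$: the top-left corner is $\initial$, the bottom edge is $f$, and the right edge is an e-morphism to the target of $f$. Axiom (K) guarantees such a square is unique up to unique isomorphism; comparing the two squares therefore forces $\initial^{c/f}$ to be canonically isomorphic to $A$ with $f^c$ corresponding to $1_A$ under that identification.

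The dual statement for $f : \initial \erto A$ is proved identically, using the vertical identity $2$-cell
\[\dsq{\initial}{\initial}{A}{A}{1_\initial}{f}{f}{1_A}\]
together with the $k$-half of axiom (K), which is a distinguished square of precisely the shape demanded there: top-left $\initial$, right edge $f$, bottom-right $A$.

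I do not foresee a real obstacle. The only thing that is not completely automatic is the observation that the identity $2$-cells above qualify as distinguished squares, and this is built into the definition of $\C$ as an internal category in $\Cat$: every $1$-morphism is assigned a canonical identity $2$-cell, and these are among the squares of $\C$. Once that is granted, the entire argument is an application of the ``unique up to unique isomorphism'' clause of axiom (K).
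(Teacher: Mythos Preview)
Your argument is correct, and in fact the paper omits the proof entirely, so you have supplied what the authors left implicit. The key observation---that the vertical identity $2$-cell on $f$ is a distinguished square because $\C$ is an internal category in $\Cat$ (so the arrow category $C_1$ has identities)---is exactly what is needed, and matching it against the uniqueness clause of axiom~(K) finishes the job.

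One small clarification worth making explicit: the ``unique up to unique isomorphism'' in axiom~(K) should be read as uniqueness among distinguished squares with prescribed top-left corner $\initial$, bottom row $f$, and left edge (here the unique e-morphism $\initial \erto \initial$). You are using it this way, but it is worth saying so, since a reader might momentarily wonder whether the uniqueness is only asserting that the particular square produced by $c$ is well-defined rather than characterizing it among all candidates.
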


The following lemma is the most important of the technical results.  It states
that e-morphisms and m-morphisms can be \emph{commuted past} one another using
distinguished squares.  This is what will allow the $Q$-construction in
Section~\ref{sect:q_construction} to work.

\begin{lemma} \label{lem:axiomC}
  For any diagram $A \mrto^f B \erto^g C$ there is a unique (up to unique
  isomorphism) distinguished square
  \[\dsq{A}{B}{D}{C}{f}{}{g}{}.\]
  The analogous statement holds for any diagram $A \erto^f B \mrto^g C$.
\end{lemma}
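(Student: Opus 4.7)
The plan is to use the equivalence $c : \Ar_\square\M \rto \Ar_\triangle\E$ to reduce the construction of the desired distinguished square to the construction of an easy commutative square in $\E$. Given $f : A \mrto B$ and $g : B \erto C$, first apply $c$ to $f$ to obtain the e-morphism $f^c : A^{c/f} \erto B$, and then compose with $g$ inside $\E$ to obtain $\gamma \defeq g \circ f^c : A^{c/f} \erto C$. The quadruple consisting of the identity on $A^{c/f}$ (top), $f^c$ (left), $\gamma$ (right), and $g$ (bottom) is a trivially commutative square and therefore defines a morphism $\phi : f^c \rto \gamma$ in $\Ar_\triangle\E$.

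Now invoke the fact that $c$ is an equivalence of categories. By essential surjectivity there exists an m-morphism $f' : D \mrto C$, unique up to unique isomorphism, with $c(f') \cong \gamma$; because $c$ is fully faithful there is a unique morphism $f \rto f'$ in $\Ar_\square\M$ whose image under $c$ is $\phi$. By the definition of $\Ar_\square\M$ this morphism is precisely a distinguished square $\dsq{A}{B}{D}{C}{f}{h}{g}{f'}$, and its right e-morphism must be $g$ because $g$ is the bottom of $\phi$. Existence and uniqueness of $(D, h, f')$ up to unique isomorphism both follow from the fact that $c$ is an equivalence.

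The analogous statement for $A \erto^f B \mrto^g C$ is proved by the dual argument, using $k : \Ar_\square\E \rto \Ar_\triangle\M$ in place of $c$, $f^k$ in place of $f^c$, and composition in $\M$ in place of composition in $\E$. The main subtle point in either direction is the identification of the right e-morphism of a distinguished square with the bottom of its image in $\Ar_\triangle\E$ under $c$ (and dually for m-morphisms and $k$); this correspondence is built into the structure of the functors $c$ and $k$, and can be checked on the canonical distinguished squares supplied by axiom (K).
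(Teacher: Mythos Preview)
Your proof is correct and follows essentially the same approach as the paper: both arguments apply $c$ to $f$, form the morphism $(f^c) \rto (gf^c)$ in $\Ar_\triangle\E$ via the identity on $A^{c/f}$ and the bottom map $g$, and then invert the equivalence $c$ to produce the desired distinguished square. The paper phrases this last step as ``applying $c^{-1}$'' and names the resulting object explicitly as $(A^{c/f})^{k/gf^c}$, whereas you invoke essential surjectivity and full faithfulness separately; these are the same maneuver.
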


\begin{proof}
  As the categories $\M$ and $\E$ are symmetric in the definition of a
  CGW-category it suffices to check the first part.  Given a diagram as in the
  statement of the lemma, we can apply $c$ to the first morphism to obtain a
  diagram
  \[A^{c/f} \erto^{f^c} B \erto^g C.\]
  This diagram represents a morphism $(A^{c/f} \erto^{f^c} B) \erto^g (A^{c/f}
  \erto^{gf^c} C)$ in $\Ar_\triangle\E$.  Applying $c^{-1}$ to this morphism produces
  a distinguished square
  \[\dsq{A}{B}{(A^{c/f})^{k/gf^c}}{C}{f}{}{g}{},\]
  where we have used that $c$ and $k$ are inverses on objects.

  To check that this distinguished square is unique, suppose we are given any
  other such square
  \[\dsq{A}{B}{D}{C}{f}{}{g}{f'}.\]
  Applying $c$ to this square produces a morphism
  \[\makeshort{(A^{c/f} \erto^{f^c} B)} \makeshort[4em]{\erto^g}
    \makeshort{(D^{c/f'} \erto^{{f'}^c} C)} \in \Ar_\triangle \E.\]
  Since the square is distinguished, we must have $A^{c/f} \cong D^{c/f'}$; if
  we choose $D^{c/f'} = A^{c/f}$ the codomain of the above morphism becomes
  $A^{c/f} \erto^{gf^c} C$.  Thus any such distinguished square is mapped by $c$
  to the original diagram; since $c$ is an equivalence of categories, the square
  must be canonically isomorphic to the square produced above.
\end{proof}

\begin{lemma} \label{lem:opensquare}
  Given any composition 
  \[C \mrto B \mrto A\]
  there is an induced map $B^{c/A} \erto C^{c/A}$ such that the triangle
  \begin{diagram}
    { B^{c/A} && C^{c/A} \\
      & A \\};
    \eto{1-1}{1-3} \eto{1-1}{2-2} \eto{1-3}{2-2}
  \end{diagram}
  commutes.
\end{lemma}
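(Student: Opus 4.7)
The plan is to produce the desired e-morphism by constructing a single distinguished square that packages the composition, and then reading off the required map from the action of the equivalence $c: \Ar_\square\M \rto \Ar_\triangle\E$. Write $f: C \mrto B$ and $g: B \mrto A$ for the two m-morphisms. Axiom~(K) applied to $f$ produces the e-morphism $f^c: C^{c/B} \erto B$, yielding a composable e-m pair $C^{c/B} \erto^{f^c} B \mrto^g A$ to which I would apply the e-m analog of Lemma~\ref{lem:axiomC}. This outputs a distinguished square
\[\dsq{C^{c/B}}{D}{B}{A}{u}{f^c}{d}{g}\]
for some object $D$ and morphisms $u: C^{c/B} \mrto D$ and $d: D \erto A$.

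The first substantive step is to identify the top-right object and right column. The proof of the e-m analog of Lemma~\ref{lem:axiomC} constructs this square by applying $k$ to $f^c$, composing with $g$, and then applying $k^{-1}$; since $c$ and $k$ are mutually inverse on arrows we have $(f^c)^k = f$, so the composite is $gf$, whose $k^{-1}$-image is the e-morphism $(gf)^c: C^{c/A} \erto A$. Hence $D = C^{c/A}$ and $d = (gf)^c$. I expect this bookkeeping to be the main obstacle.

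The rest is formal. Viewing the distinguished square as a morphism in $\Ar_\square\M$ from $u: C^{c/B} \mrto C^{c/A}$ to $g: B \mrto A$ and applying $c$ yields a morphism in $\Ar_\triangle\E$ from $u^c$ to $g^c$, that is, a commutative square whose top is an isomorphism $\alpha: (C^{c/B})^{c/u} \cong B^{c/A}$ (the equicofibered property noted in the paper) and whose bottom is $(gf)^c$. The induced e-morphism is then $u^c \circ \alpha^{-1}: B^{c/A} \erto C^{c/A}$, and the triangle in the statement commutes because $(gf)^c \circ u^c \circ \alpha^{-1} = g^c \circ \alpha \circ \alpha^{-1} = g^c$ by commutativity of the $\Ar_\triangle\E$-square.
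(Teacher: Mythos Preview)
Your proof is correct and follows essentially the same approach as the paper: both construct the distinguished square $\dsq{C^{c/B}}{C^{c/A}}{B}{A}{}{f^c}{(gf)^c}{g}$ and then apply $c$ to extract the commutative triangle in $\E$. The only difference is presentational---you build the square via Lemma~\ref{lem:axiomC} and then identify the corners by unwinding its proof, whereas the paper obtains it in one step by applying $k^{-1}$ directly to the morphism $(C\mrto B)\to(C\mrto A)$ in $\Ar_\triangle\M$.
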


\begin{proof}
  We begin by applying the equivalence of categories given by $k^{-1}$ from Axiom
  (K).  Since $k^{-1} = c$ on objects, we have the induced diagram
  \begin{diagram}
    {C^{c/B} & C^{c/A} \\
     B & A \\};
   \eto{1-2}{2-2} \eto{1-1}{2-1} \mto{1-1}{1-2}^h \mto{2-1}{2-2}
   \dist{1-1}{2-2}
  \end{diagram}
  We now apply the equivalence given by $c$ to produce the diagram
  \[B^{c/A} = (C^{c/B})^{c/h} \erto C^{c/A} \erto A.\]
\end{proof}

We conclude this section with a pair of definitions that will be useful
in later sections.

\begin{definition}
  Let $\C = (\E,\M,\phi,c,k)$ be a CGW-category.  A \emph{CGW-subcategory} is a
  sub-double category $\A \subseteq \C$ such that $(\A,\phi|_{\A},c|_\A,k|_\A)$
  is also a CGW-category.
\end{definition}

\begin{definition} \label{def:closure}
  We say that a CGW-subcategory $\A$ of a CGW-category $(\C,\phi,c,k)$
  is \emph{closed under subobjects} if for any morphism $B \mrto C\in \M$,
  if $C\in \A$ then $B\in \A$.  We say that $\A$ is \emph{closed under
    quotients} if for any morphism $B \erto C\in \E$, if $C\in \A$ then
  $B\in \A$.  We say that $\A$ is \emph{closed under extensions} if for every
  distinguished square
  \[\dsq ABCD{}{}{}{}\]
  if $A$, $B$ and $C$ are in $\A$ then so is $D$.
\end{definition}

\section{Examples} \label{sec:ex}

In this section we give several motivating examples of CGW-categories.

\begin{example} \label{ex:exact}
  Let $\A$ be an exact category.  Let $(\C,c,k)$ be given by
  \[\E = \{\mathrm{admissible\
      epimorphisms}\}^\op \qqand \M = \{\mathrm{admissible\ monomorphisms}\};\]
  Define $\phi$ to be the identity on objects and \emph{inversion} on morphisms.
  The distinguished squares are stable squares: those squares that are both
  pushouts and pullbacks in $\A$.  The equivalence $k$ is given by mapping every
  admissible epimorphism to its kernel; the equivalence $c$ is given by taking
  every admissible monomorphism to its cokernel.

  We check the axioms explicitly.
  \begin{itemize}
  \item[(Z)] The zero object is initial in $\M$ and terminal in $\E$, so it is
    initial in both $\M$ and $\E$.
  \item[(I)] This follows directly from the definition.
  \item[(M)] This holds by definition.
  \item[(K)] $k$ and $c$ give the correct equivalences, since distinguished
    squares are both equifibered (since they are pullbacks) and equicofibered
    (since they are pushouts).
  \item[(A)] This holds with $X = A\oplus B$.
  \end{itemize}
\end{example}

Thus an exact category gives rise to a CGW-category.
However, there are examples of CGW-categories which are not exact.  

\begin{example} \label{ex:finset*}
  Consider the category $\FinSet_*$ of based finite sets.  We define a
  CGW-category $(\C,c,k)$ by setting
  \[\M = \{\mathrm{injections}\}\qqand 
    \E = \left\{f:\makeshort{A \rto B}\,\Big|\,
      f|_{f^{-1}(B\backslash\{*\})}\hbox{ is a bijection} \right\}^\op.\] The
  distinguished squares are the pushout squares; these are also all
  pullback squares.  The equivalence $\phi$ is defined, as in the previous
  example, by taking inverses.  Define $k$ by taking $f:A \rfib B$ to
  $f^{-1}(*) \rcofib A$.  Define $c$ by taking $g:A \rcofib B$ to
  $B \rfib B/g(A)$, with the elements not in the image of $g$ mapping
  to themselves, and everything else mapping to the basepoint.

  That axioms (Z), (I), (M), and (A) are satisfied is direct from the
  definition.  
  The distinguished squares are pullback squares in the underlying category;
  therefore in a distinguished square the preimages of the basepoint of the two
  vertical maps are isomorphic.  This proves half of (K).  Dually, the
  complements of the two injections horizontally are also isomorphic, since $g$
  is injective away from the basepoint.
\end{example}

One of the advantages of CGW-categories is the observation that the
contravariance in the $\E$-direction is not necessary. All of the following
examples come equipped with an ambient category, so we omit mention of $\phi$.

\begin{example} \label{ex:finset}
  Consider the category $\FinSet$.  We define a CGW-category $(\C,c,k)$ by
  setting
  \[\E = \M = \{\mathrm{injections}\}.\]
  The distinguished squares are the pushout squares; since all morphisms are
  injections, they are also pullback squares.  The equivalences $c$ and $k$ are
  given by taking any injection $A \rcofib B$ to the inclusion
  $B \backslash A \rcofib B$.

  That axioms (Z), (I), (M), and (A) are satisfied is direct from the
  definition.
  Since distinguished squares are pushouts, the complements of the images in the
  horizontal maps are isomorphic; the same holds dually for the vertical maps.
  Thus (K) holds.
\end{example}

We can also improve the intuition from the finite sets example to get a
CGW-category structure on the category of varieties.

\begin{example} \label{ex:var}
  Let $\C = \Var$
  \[\E = \{\mathrm{open\ immersions}\} \qqand\M =
  \{\mathrm{closed\ immersions}\}.\] We let both $c$ and $k$ take a morphism to
  the inclusion of the complement.  The distinguished squares 
  \[\dsq ABCD{}{}{f}{g}\] are the pullback
  squares in which $\im f \cup \im g = D$. Axiom (Z) is satisfied by the empty
  variety. Axiom (I) holds by definition. Axiom (M) is verified by noting that
  open and closed immersions satisfy base change in the category of
  varieties. 
  Axiom (A) holds by setting $X = A \amalg B$.
  To see that Axiom (K) holds, consider a distinguished square 
  \[\dsq ABCD{}{}{}{}.\]
  By definition, $D \smallsetminus C \cong B \smallsetminus A$, since the image
  of $B$ in $D$ contains the complement of the image of $C$.  The dual statement
  for e-morphisms holds as well.  
\end{example}

The CGW-category of varieties includes into the larger category of reduced
schemes of finite type via a CGW-functor:

\begin{example}
  Let $\Sch_{rf}$ be the category of reduced schemes of finite type, with
  morphisms the compositions of open and closed immersions.  We define the
  $\E$-morphisms to be the open immersions and the $\M$-morphisms to be the
  closed immersions.  The distinguished squares are those squares
  \begin{diagram}
    { A & B \\ C & D \\};
    \dist{1-1}{2-2} \mto{1-1}{1-2} \mto{2-1}{2-2}^f \eto{1-1}{2-1} \eto{1-2}{2-2}^g
  \end{diagram}
  for which $D = \im f \cup \im g$ and which are pullbacks in the category of schemes.
\end{example}

We can also restrict attention just to \textsl{smooth} varieties.

\begin{example}
  The category $\Var_{/k}^{sm}$ of smooth varieties can be given a CGW-structure.
  We set the m-morphisms to be closed immersions with smooth complements, and
  the e-morphisms to be open immersions with smooth complements.  Thus
  $\Var_{/k}^{sm}$ is a sub-CGW-category (but not a full sub-CGW-category) of
  $\Var_{/k}$.  
\end{example}

\section{The $K$-theory of a CGW-category}\label{sect:q_construction}

We are now ready to define the $K$-theory of a CGW-category. The construction exactly follows Quillen's $Q$-construction \cite{quillen} for exact categories. After the introduction of the definition, the rest of the section is taken up by noting some useful technical results and providing the standard presentation for the group $K_0(\C)$. 

\begin{definition}
  For a CGW-category $(\C,\phi,c,k)$ we define 
  \[K(\C) = \Omega |Q\C|,\]
  where $Q\C$ is the category with 
  \begin{description}
  \item[objects] the objects of $\C$,
  \item[morphisms] morphisms $A \rto B$ are equivalence classes of diagrams 
    \[A \erto^f X \mrto^g B,\] where $f\in \E$ and $g\in \M$.  Two diagrams
    \[A \erto^f X \mrto^g B \qqand A \erto^{f'} X' \mrto^{g'} B\]
    are considered equivalent if there exists a diagram
    \begin{squisheddiagram}
      { & X \\
        A && B \\
        & X' \\};
      \mto{2-1}{1-2}^f \mto{2-1}{3-2}_{f'}
      \eto{1-2}{2-3}^g \eto{3-2}{2-3}_{g'}
      \to{1-2}{3-2}^{\cong}
    \end{squisheddiagram}
    where the left-hand triangle commutes in $\E$ and the right-hand triangle
    commutes in $\M$.  The functor $\phi$ is implicitly being used to place the
    vertical isomorphism in both $\E$ and $\M$ simultaneously.
  \item[composition] defined using Lemma~\ref{lem:axiomC}.  More concretely, given two
    equivalence classes of diagrams represented by
    \[A \erto^f X \mrto^g B \qqand B \erto^{f'} Y \mrto^{g'} C\]
    there exists a unique (up to unique isomorphism) distinguished square
    \[\dsq XBZY{g}{f''}{f'}{g''}.\]
    The composition of the two diagrams is defined to be the class of diagrams
    represented by
    \[A \erto^{f''f} Z \mrto^{g'g''} C.\]
  \end{description}
  The basepoint is taken to be $\initial$.
\end{definition}

\begin{remark}
  Although we have defined $K$-theory for CGW categories, the $K$-theory of a
  double category is defined for any double category satisfying Lemma~\ref{lem:axiomC}. 
\end{remark}

As with any definition of $K$-theory, the first step is to check that it gives
the desired group on $K_0$.

\begin{theorem}\label{thm:pi_0}
  $K_0(\C)$ is the free abelian group generated by objects of $\C$, modulo the
  relation that for any distinguished square \[\dsq{A}{B}{D}{C}{}{}{g}{f}\] we
  have $[D] + [B] = [A] + [C]$. 
\end{theorem}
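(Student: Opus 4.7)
The plan is to follow Quillen's original computation of $K_0$ for exact categories (\cite{quillen}, \S 2), adapting each step to the CGW setting using the axioms of Section~\ref{sec:cgw} and Lemma~\ref{lem:axiomC}. First, $|Q\C|$ is path-connected: by Axiom (Z), for every object $A$ there is a canonical edge $e_A:\initial\to A$ in $Q\C$ represented by $\initial \erto A \mrto^{1_A} A$. Hence $K_0(\C)=\pi_1(|Q\C|,\initial)$, and I will compute this fundamental group by the standard presentation coming from the 2-skeleton of the nerve, using $\{e_A\}$ as the spanning tree.

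For each object $A$ there is a second canonical edge $e'_A:\initial\to A$ represented by $\initial \erto^{1_\initial}\initial \mrto A$ (since $\initial$ is also initial in $\M$). Write $\langle A\rangle:=e_A^{-1}\cdot e'_A\in\pi_1(|Q\C|,\initial)$. I will show these generate $\pi_1$ by factoring an arbitrary edge $\alpha=[A\erto^f X\mrto^g B]$ as a composition $i_g\circ q_f$, where $q_f=[A\erto^f X\mrto^{1_X} X]$ and $i_g=[X\erto^{1_X} X\mrto^g B]$. Applying Axiom (I) to recognize the relevant distinguished squares, one computes in $Q\C$ that $q_f\circ e_A=e_X$ and $i_g\circ e'_X=e'_B$. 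The tree relations then give that the loop of $q_f$ is trivial and the loop of $i_g$ equals $\langle B\rangle\cdot\langle X\rangle^{-1}$. Consequently the loop of $\alpha$ is $\langle B\rangle\cdot\langle X\rangle^{-1}$.

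The 2-cells of the nerve, i.e., commuting triangles $\gamma=\beta\alpha$ in $Q\C$, supply the relations. Applied to the composition law of Lemma~\ref{lem:axiomC} for $\alpha:A\to B$ and $\beta:B\to C$, whose composition is controlled by a distinguished square with corners $X,B,Z,Y$ in top-left, top-right, bottom-left, bottom-right positions, the resulting identity in $\pi_1$ specializes (after cancellation) to $\langle Y\rangle\langle X\rangle=\langle B\rangle\langle Z\rangle$; abelianized, this is exactly the relation $[X]+[Y]=[Z]+[B]$ asserted by the theorem for that square. Conversely, every distinguished square arises as such a composition square (by taking its top edge and right edge as the input for the composition of an $i$-morphism with a $q$-morphism), so these relations exhaust what is imposed. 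Finally, Axiom (A) provides, for each pair $A,B$, two distinguished squares with corners $\{\initial,A,B,X\}$ in opposite orderings, whose associated relations are $\langle X\rangle=\langle A\rangle\langle B\rangle$ and $\langle X\rangle=\langle B\rangle\langle A\rangle$, forcing $\pi_1$ to be abelian on generators and hence abelian.

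The main technical obstacle is the second step: verifying, using Axioms (I) and (K) together with the uniqueness clause of Lemma~\ref{lem:axiomC}, that the compositions $q_f\circ e_A$ and $i_g\circ e'_X$ evaluate in $Q\C$ to $e_X$ and $e'_B$ respectively, so that the loop of a general edge acquires the clean form $\langle B\rangle\cdot\langle X\rangle^{-1}$. Once this is established, matching the 2-cell relations against the theorem's distinguished-square relation is bookkeeping, and mutually inverse homomorphisms between $\pi_1(|Q\C|,\initial)$ and the abelian group presented in the theorem's statement can be read off directly from the generators and relations derived above.
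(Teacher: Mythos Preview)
Your proposal is correct and follows essentially the same approach as the paper's proof: both compute $\pi_1(|Q\C|,\initial)$ via a maximal tree and reduce the composition relations to the distinguished-square relation, then invoke Axiom~(A) for commutativity. The only difference is a dual choice of tree---you take the $e$-morphisms $e_A=[\initial\erto A]$ as the tree and let the $m$-morphisms $e'_A=[\initial\mrto A]$ carry the generators $\langle A\rangle$, whereas the paper takes the $m$-morphisms as the tree and uses $[A]=[\initial\erto A]$ as generators; the verifications $q_f\circ e_A=e_X$ and $i_g\circ e'_X=e'_B$ you flag as the technical obstacle are exactly dual to the paper's one-line observations that pure $m$-morphisms and pure $e$-morphisms compose in $Q\C$ as they do in $\M$ and $\E$.
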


\begin{proof}
  There are two ways to proceed. One could prove this by showing that $K(\C)$ is
  equivalent to some variant of the $S_\dotp$ construction, and proceeding
  from there, or one could mimic Quillen's original proof that $\pi_1 (BQ\C) =
  K_0 (\C)$ for exact categories. We opt for the latter, again to emphasize the
  analogy with exact categories.

  We follow a more modern version of the proof (see, e.g. \cite[Proposition
  IV.6.2]{weibel_kbook}).

  The morphisms $\initial \erto^= \initial \mrto A$ form a maximal tree in $BQ\C$.  By
  \cite[Lemma IV.3.4]{weibel_kbook}, the fundamental group $\pi_1 (B Q \C)$ is
  generated by the morphisms of $BQ\C$, modulo the relations
  $[\initial \erto^= \initial\mrto A] = 1$ and $[f] \cdot [g] = [f \circ g]$ for
  composable morphisms in $Q \C$. We proceed by a series of reductions to get
  the set of generators and relations in the theorem. In what follows we let
  $[A \erto X \mrto B]$ denote the equivalence class of a morphism $A \rto B$ in
  $\pi_1 (B Q \C)$. The notation $[A \mrto B]$ corresponds to the morphism
  $[A \erto^= A \mrto B ]$ and similarly $[A \erto B]$ corresponds to
  $[A \erto B \mrto^= B]$.  

  From the definition of $Q$ we have $[B \mrto C][A \mrto B] = [A \mrto C]$.  In
  particular, since $[\initial \mrto X]= 1$ in $\pi_1(BQ\C)$ for all objects $X$,  $[A
  \mrto B] = 1$ for all m-morphisms.

  We begin by noting that by definition
  \[[A \mrto B][D \erto A] = [D \erto A \mrto B].\]
  Now consider $[B \erto C][A \mrto B]$.  By Lemmma~\ref{lem:axiomC} there exists a
  distinguished square
  \[\dsq{A}{B}{D}{C}{}{}{}{}\]
  which implies the relation 
  \[[B \erto C][A\mrto B] = [D \mrto C][A \erto D]\]
  via the composition relation.
  Each distinguished square produces such a relation.  Since all
  morphisms in $\M$ are equal to the identity, this reduces to the equation
  \[[B \erto C] = [A \erto D]\]
  for all distinguished squares.  We have now shown that $\pi_1(BQ\C)$ 
  has as generators the morphisms of $\E$, with relations induced by composition
  and distinguished squares.

  Since
  \begin{equation}\label{main_relation}
  [\initial \erto A_1][A_1 \erto A_2] = [\initial \erto A_2]. 
  \end{equation}
  $\pi_1(BQ\C)$ is generated by the elements $[\initial\erto A]$, which we
  abbreviate to $[A]$.  This expression also eliminates the composition
  relation.  We can substitute for both sides in the relations induced by the
  distinguished squares to get
  \[[B]^{-1}[C] = [A]^{-1}[D].\] 
  This gives the desired presentation of $K_0(\C)$.

  It remains to check that $K_0(\C)$ is abelian; in other words, that $[A][B] =
  [B][A]$.  The relations imposed by the squares in Axiom~(A) state that
  \[[A][B] = [X] = [B][A],\]
  as desired.
\end{proof}

The rest of this section is devoted to some technical lemmas exploring the
properties of this $Q$-construction.  The first identifies the isomorphisms in
$Q\C$ via their components.

\begin{lemma} \label{lem:iso}
  If $\alpha:A \rto B$ is an isomorphism inside $Q\C$ for a CGW-category $\C$
  represented by
  \[A \erto^f X \mrto^g B\]
  then both $f$ and $g$ are isomorphisms in $\C$.
\end{lemma}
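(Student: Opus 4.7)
The plan is to use the inverse of $\alpha$ together with the equivalence relation defining morphisms in $Q\C$, and then to leverage Axiom (M) to upgrade split monos/epis to isomorphisms.

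Let $\beta: B \to A$ be the inverse of $\alpha$ in $Q\C$, and pick a representative $B \erto^{f'} Y \mrto^{g'} A$. To form $\beta \circ \alpha$, we apply Lemma~\ref{lem:axiomC} to the composable pair $X \mrto^g B \erto^{f'} Y$, obtaining a distinguished square with some apex $Z$ together with an e-morphism $\tilde{f'} : X \erto Z$ and an m-morphism $\tilde{g} : Z \mrto Y$. The composite $\beta\circ\alpha$ is then represented by the zig-zag $A \erto^{\tilde{f'}\circ f} Z \mrto^{g'\circ \tilde{g}} A$.

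Since $\beta\circ\alpha = \mathrm{id}_A$ in $Q\C$ and the identity is represented by $A \erto^{\mathrm{id}} A \mrto^{\mathrm{id}} A$, the definition of the equivalence relation furnishes an isomorphism $\psi: A \to Z$ making the two triangles commute. Writing out these commutativities in $\E$ and $\M$, one obtains the identities
\[\tilde{f'}\circ f = \psi \qquad\text{and}\qquad (g'\circ \tilde{g})\circ \psi = \mathrm{id}_A.\]
The first equation says $\tilde{f'}\circ f$ is an isomorphism, hence $\tilde{f'}$ is a split epimorphism; since $\tilde{f'}$ is also monic by Axiom~(M), $\tilde{f'}$ is an isomorphism, and then so is $f$. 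The second equation says $g'\circ \tilde{g}$ is a split epimorphism with section $\psi$; since it is monic by Axiom~(M), it is an isomorphism, and since $\tilde{g}$ is monic, the same argument shows $g'$ is an isomorphism.

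Now we run the symmetric argument on $\alpha \circ \beta = \mathrm{id}_B$: applying Lemma~\ref{lem:axiomC} to $Y \mrto^{g'} A \erto^{f} X$ and unwinding the equivalence with the identity zig-zag at $B$ shows that $f'$ and $g$ are also isomorphisms. In particular both components $f$ and $g$ of $\alpha$ are isomorphisms, as claimed. The only subtle point is the bookkeeping: one must track which composition in the distinguished square plays the role of the e-leg and which plays the m-leg in the composite zig-zag, after which everything reduces to the elementary fact that in any category a monic split epimorphism is invertible.
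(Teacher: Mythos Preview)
Your proof is correct and follows essentially the same approach as the paper: represent the inverse, form the composite via the distinguished square from Lemma~\ref{lem:axiomC}, compare with the identity representative to see that the e-leg of the composite is an isomorphism, and then use Axiom~(M) (monic + split epi $\Rightarrow$ iso) to conclude that $f$ is an isomorphism; the other composition handles $g$. The only difference is cosmetic---you also record that $g'$ and $f'$ are isomorphisms, which is not needed for the statement but does no harm.
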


\begin{proof}
  Suppose that the inverse of $\alpha$ is represented by
  \[B \erto^{f'} Y \mrto^{g'} A.\]
  Then the composition is represented by a diagram
  \begin{diagram}
    { A & X & B \\
      & Z & Y \\ & & A \\};
    \eto{1-1}{1-2}^f \mto{1-2}{1-3}^g
    \eto{1-2}{2-2}_{f''} \eto{1-3}{2-3}^{f'}
    \mto{2-2}{2-3}^{g''} \mto{2-3}{3-3}^{g'}
    \dist{1-2}{2-3}
  \end{diagram}
  Since this is equivalent to $1_A$, $f''f$ is an isomorphism.  Since $f''$ is
  monic and $f$ is its right inverse, it must be an isomorphism; thus $f$ is an
  isomorphism.  Doing the composition in reverse, we see that $g$ has a right
  inverse and thus must also be an isomorphism. 
\end{proof}

The next lemma illustrates that we can think of a morphism in $Q\C$ as a set of
``layers'' inside $\M$.  This allows us to think about the $Q$-construction in
CGW-categories analogously to the way that Quillen originally thought about
exact categories in \cite{quillen}.

\begin{lemma} \label{lem:preorder}
  For any CGW-category $\B$ and any $B\in \B$, the category $Q\B_{/B}$ is
  equivalent to the category $L_B\B$ with
  \begin{description}
  \item[objects] diagrams $B_1 \mrto B_2 \mrto B$ in $\B$,
  \item[morphisms] commutative diagrams
    \begin{diagram}
      { B_1 & B_2 & B \\
        B_1' & B_2' & B\\};
      \mto{1-1}{1-2} \mto{1-2}{1-3}
      \mto{2-1}{2-2} \mto{2-2}{2-3} 
      \mto{2-1}{1-1} \mto{1-2}{2-2} \eq{1-3}{2-3}
    \end{diagram}
  \end{description}
  In particular, $Q\B_{/B}$ is a preorder for any $B$.
\end{lemma}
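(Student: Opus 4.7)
The plan is to construct a functor $\Phi\colon L_B\B \rto Q\B_{/B}$ and show it is an equivalence; the preorder assertion will then follow because $L_B\B$ is visibly a preorder---since all maps in $\M$ are monic (Axiom~(M)), the equations $b = b'\beta$ and $a' = \beta a \alpha$ uniquely determine $\beta$ and $\alpha$ when they exist. On objects, $\Phi$ sends $(B_1 \mrto^a B_2 \mrto^b B)$ to the morphism $A \rto B$ in $Q\B$ represented by $A \defeq B_1^{c/a} \erto^{a^c} B_2 \mrto^b B$. Essential surjectivity is immediate: a general object is a class of diagrams $A \erto^f X \mrto^g B$, and since $c$ and $k$ are mutually inverse equivalences on arrows (Axiom~(K)), this is the image of $(A^{k/f} \mrto^{f^k} X \mrto^g B)$.

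To define $\Phi$ on morphisms, given $(\alpha\colon B_1' \mrto B_1,\; \beta\colon B_2 \mrto B_2')$ in $L_B\B$ I would produce the morphism $A \rto A'$ in $Q\B$ represented by the class of $A \erto X \mrto A'$, where $X \defeq (B_1')^{c/a\alpha}$; the $\E$-morphism $A \erto X$ comes from Lemma~\ref{lem:opensquare} applied to the composition $B_1' \mrto B_1 \mrto B_2$, while the $\M$-morphism $X \mrto A'$ is produced by applying Lemma~\ref{lem:axiomC} to $X \erto B_2 \mrto^\beta B_2'$, with the top-right corner of the resulting distinguished square identified with $A'$ via equifiberedness and the relation $a' = \beta a \alpha$. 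The ``over $B$'' condition is checked by tracing through the composite $X \mrto A' \to B$.

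The fully faithful step is the main technical point. Given a morphism in $Q\B_{/B}$ represented by $A \erto^h Y \mrto^k A'$, the condition that it lies over $B$ says that composing with $A' \erto^{(a')^c} B_2' \mrto^{b'} B$ yields the same class in $Q\B$ as $A \erto^{a^c} B_2 \mrto^b B$. Unpacking this composite via Lemma~\ref{lem:axiomC} applied to $Y \mrto^k A' \erto^{(a')^c} B_2'$ yields a distinguished square whose new vertex $V$ must, by the equivalence relation defining $Q\B$-morphisms together with Lemma~\ref{lem:iso}, be canonically isomorphic to $B_2$; transporting the $\M$-morphism $V \mrto B_2'$ through this isomorphism gives $\beta$, and equifiberedness of the same square (combined with the dual of Lemma~\ref{lem:opensquare} applied to the composable $\E$-morphisms $A \erto Y \erto V$) extracts $\alpha\colon B_1' \mrto B_1$, with the commutativity $a' = \beta a \alpha$ arising from the triangle attached to the dual open-square lemma. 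The principal obstacle is verifying that this extraction is well-defined on the equivalence class of the $Q$-morphism, not merely on a choice of representative; this requires combining Lemma~\ref{lem:iso} with the uniqueness-up-to-unique-isomorphism clauses of Lemma~\ref{lem:axiomC} and Axiom~(K), so that any two representatives produce canonically matching pairs $(\alpha,\beta)$.
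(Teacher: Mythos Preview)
Your proof is correct and uses essentially the same ingredients as the paper, but you build the equivalence in the opposite direction. The paper defines $\kappa\colon Q\B_{/B} \to L_B\B$ directly---sending $B_1 \erto^g B_2 \mrto B$ to $B_1^k \mrto^{g^k} B_2 \mrto B$---and works out $\kappa$ on morphisms by applying $c^{-1}$ and then $k$ to decode a $Q$-morphism as a diagram purely in $\M$; it then simply notes that the inverse on objects is your $\Phi$ and invokes Axiom~(K). Your fully-faithful argument is effectively a reconstruction of the paper's $\kappa$ on morphisms, so the net content is the same. The paper's direction is marginally more economical: once $\kappa$ is written down and $L_B\B$ is seen to be a preorder, essential surjectivity plus fullness of $\kappa$ (both immediate from $c$ and $k$ being inverse equivalences) finish the job without a separate faithfulness check, whereas you carry both $\Phi$ on morphisms and the inverse extraction. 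One small remark: your appeal to Lemma~\ref{lem:iso} in the fully-faithful step is not needed---the identification of $V$ with $B_2$ comes straight from the equivalence relation defining $Q$-morphisms, not from analyzing an isomorphism in $Q\B$.
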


\begin{proof}
  It suffices to prove the first part of the lemma; the second follows from the
  definition of $L_B\B$ and axiom (M).

  We define a functor $\kappa: Q\B_{/B} \rto L_B\B$.  
  An object of $Q\B_{/B}$ is a diagram $B_1 \erto^{g} B_2 \mrto B$.  We
  send this to the diagram $B_1^k \mrto^{g^k} B_2 \mrto B$.  Seeing that this
  extends to a functor is a bit more complicated.
  Suppose that 
  \[B_1 \erto^{g} B_2 \mrto^{f} B \qqand B_1' \erto^{g'} B_2'
  \mrto^{f'} B\] are two objects of $Q\B_{/B}$, and suppose that we are given
  a morphism between them.  This morphism consists of an object $C\in \B$ and
  a diagram
  \begin{diagram}
    { B_1 & C & B_1'\\
      & B_2 & B_2' \\ 
      & & B \\};
    \eto{1-1}{2-2}_{g} \eto{1-1}{1-2}^{h'}
    \eto{1-2}{2-2}^{h} \mto{2-2}{2-3} \mto{2-2}{3-3}_{f}
    \mto{1-2}{1-3} \eto{1-3}{2-3}^{g'}
    \mto{2-3}{3-3}^{f'} 
    \dist{1-2}{2-3}
  \end{diagram}

  Applying $c^{-1}$ to the upper-left triangle, this diagram corresponds to a
  unique diagram
  \begin{diagram}
    { B_1^{k/C} & C & B_1'\\
      B_1^{k/B_2} & B_2 & B_2' \\ 
      & & B \\};
    \eto{1-1}{2-1}_{h''} \mto{1-1}{1-2}
    \eto{1-2}{2-2}^{h} \mto{2-2}{2-3} \mto{2-2}{3-3}_{f}
    \mto{1-2}{1-3} \eto{1-3}{2-3}^{g'}
    \mto{2-3}{3-3}^{f'}
    \mto{2-1}{2-2}
    \dist{1-1}{2-2} \dist{1-2}{2-3}
  \end{diagram}
  Applying $k$, this time to the two distinguished squares on the top, gives us a
  unique diagram
  \begin{diagram}
    {  {B_1'}^{k/B_2'} = (B_1^{k/B_2})^{k/h''} & {B_1}^{k/B_2} & B_2 & B_2' \\ 
      & & & B \\};
    \mto{1-1}{1-2} \mto{1-2}{1-3} \mto{1-3}{1-4}
    \mto{1-3}{2-4}_{f} \mto{1-4}{2-4}^{f'} 
  \end{diagram}
  This can be rearranged into a diagram
  \begin{diagram}
    { B_1^{k/B_2} & B_2 & B \\
      {B_1'}^{k/B_2'} & B_2' & B \\};
    \mto{1-1}{1-2} \mto{1-2}{1-3}^{f}
    \mto{2-1}{2-2} \mto{2-2}{2-3}^{f'}
    \eq{1-3}{2-3} \mto{2-1}{1-1} \mto{1-2}{2-2}
  \end{diagram}
  as desired.

  The inverse equivalence is given by sending a diagram $B_1 \mrto B_2 \mrto
  B$ to $B_1^{c/B_2} \erto B_2 \mrto B$. By Axiom~(K) these two functors give
  inverse equivalences.
\end{proof}

We now give several examples of $K$-theories of CGW-categories.

\begin{example}
  We consider the examples from Section~\ref{sec:ex}.

  \begin{description}
  \item[Example~\ref{ex:exact}] When $(\C,c,k)$ arises from an exact category
    $\A$, $BQ\C = BQ\A$, so $K(\C) = K(\A)$.
  \item[Example~\ref{ex:finset*}] The simplicial set $BQ\C$ is an edgewise
    subdivision of the $S_\dotp$-construction for the Waldhausen category
    $\FinSet_*$ with injections as the cofibrations (for a more in-depth
    discussion, see Theorem~\ref{thm:S.=Q}).  Thus
    \[
      K(\C):= \Omega B Q \FinSet_* \simeq K^{\mathrm{Wald}}(\FinSet_+) \simeq
      \Omega^\infty\Sigma^\infty S^0
    \]
    where the last equivalence is by Barrat-Priddy-Quillen
    \cite{barratt_priddy}.
  \item[Example~\ref{ex:finset}] In this case we also have $K(\C) \simeq
    \Omega^\infty \Sigma^\infty S^0$. Indeed, there is an equivalence of CGW-categories between
    $(\FinSet,c,k)$ and $(\FinSet_*,c,k)$ from Example~\ref{ex:finset*} given as
    follows. An injection $[i] \rcofib [j]$ considered as an element of
    $\E \subset \FinSet$ corresponds to an injection $[i]_+ \rcofib [j]_+$ in
    $\FinSet_*$. An injection $u: [i] \rcofib [j]$ considered as an element of
    $\M \subset \FinSet$ corresponds to a surjection $[j]_+ \rfib [i]_+$ by
    taking $m \in [j]$ to $u^{-1}(m)$ and the rest of $[j]$ to the distinguished
    basepoint.
  \item[Example~\ref{ex:var}] $K(\Var)$ is equivalent to the $K$-theory of
    varieties defined in \cite{campbell}; for a more detailed discussion, see
    Section~\ref{subdivision}.
  \end{description}
\end{example}

\section{ACGW-Categories} \label{sec:acgw}

A CGW-category behaves like an exact category.  In order to create categories
that are analogous to abelian categories (with the goal of proving Quillen's
d\'evissage and localization) we need to assume some extra conditions.  The
extra conditions amount to the requirement that certain ``pushout-like'' objects
exist and are compatible with $c$ and $k$; in geometric settings this
corresponds to certain gluings of objects.

\begin{definition}
  An \emph{enhanced} double category is a double category $\C$ with \emph{two}
  notions of $2$-cell, called the \emph{distinguished} and
  \emph{pseudo-commutative} squares.  These are required to satisfy the property
  that forgetting either of the sets of squares produces a double category, and
  all distinguished squares are pseudo-commutative.  We denote distinguished
  squares with $\square$ and pseudo-commutative squares with $\circlearrowleft$.

  We write $\Ar_\circlearrowleft \M$ for the category whose objects are
  morphisms in $\M$ and whose morphisms are pseudo-commutative squares in $\C$.
  We write $\Ar_\times\M$ for the category whose objects are morphisms in $\M$
  and whose morphisms are pullback squares in $\M$.  The category $\Ar_\square\M$
  is a subcategory of $\Ar_\circlearrowleft \M$ and $\Ar_\triangle \M$ is a
  subcategory of $\Ar_\times \M$ (since all morphisms in $\M$ are monic).
\end{definition}

\begin{remark}
  The term ``pseudo-commutative'' is inspired by the role that commutative
  squares play in the case when we are discussing abelian categories.  Consider
  an abelian category $\A$, and the associated CGW-category $\C$.  The
  distinguished squares in $\C$ are the stable squares.  However, the
  commutative squares in $\A$ also play a role in the following sense.  In an
  abelian category, every morphism $f:A \rto B$ can be factored as
  $A \rfib \im f \rcofib B$, an epic followed by a monic.  This means that in
  $\C$, any diagram of the form
  \[X \erto Z \mlto Y,\] which represents a monic followed by an epic, can be
  completed to a square in an essentially unique way.  This square will
  \emph{not} necessarily be distinguished, but it is still important.  This
  completion is the ``mixed pullback'' that we define in the next definition.
\end{remark}

Before we define a pre-ACGW-category we need one extra helper-definition; this
is necessary because, although monomorphisms always behave well with respect to
pullbacks, they do not always behave well with respect to pushouts.

\begin{definition}
  Let $\C$ be a category in which all morphisms are monic, and let
  \[C \lto A \rto B\] be a diagram in $\C$.  The \emph{restricted pushout} of
  this diagram is the initial object (if it exists) in the category of
  commutative squares
  \begin{diagram}
    { A & B \\
      C & X \\};
    \arrowsquare{}{}{}{}
  \end{diagram}
  which are also pullback squares; in other words, it is cones $X$ under the
  diagram such that $A \cong B \times_X C$.  As usual, a morphism between
  diagrams is a natural transformation in which all components are equal to the
  identity except at $X$.  We denote restricted pushouts by $B \star_A C$.
\end{definition}

The important intuition behind this definition lies in the following example:
\begin{example}
  Let $\C$ be the category of sets and injections.  Then $\C$ does not contain
  all pushouts, as for example the diagram
  \[A \lto \emptyset \rto A\]
  does not have a pushout for any nonempty set $A$; this is because the 
  map $A \amalg A \rto A$ is not a monomorphism.  However, the restricted
  pushout of this diagram exists and, as expected, will be isomorphic to $A
  \amalg A$.  
\end{example}

We are now ready to define pre-ACGW-categories:

\begin{definition}\label{defn:pre_acgw}
  A \emph{pre-ACGW-category $(\C,\phi,c,k)$} is an enhanced double category $\C$
  which is a CGW-category when the pseudo-commutative squares are forgotten, and in
  which the following extra axioms are satisfied:
  \begin{itemize}
  \item[(P)] $\M$ and $\E$ are closed under pullbacks.
  \item[(U)] The functors $c$ and $k$ extend to equivalences of categories
    \[c:\Ar_\circlearrowleft \M \rto \Ar_\times\E \qqand 
    k: \Ar_\circlearrowleft \E \rto \Ar_\times \M.\]
    These are compatible in the sense that for any diagram $A \mrto C \elto B$
    there exists a unique isomorphism     \[\varphi: (A\times_C B^k)^{c/A} \rto
    (A^c\times_C B)^{k/B}\]
    such that the square
    \begin{diagram}
      { (A\times_C B^k)^{c/A} & (A^c\times_C B)^{k/B} & B \\
        A && C \\};
      \to{1-1}{1-2}^\varphi \mto{1-2}{1-3} \eto{1-1}{2-1} \mto{2-1}{2-3}
      \eto{1-3}{2-3} 
      \comm{1-1}{2-3}
    \end{diagram}
    is a pseudo-commutative square.

    We write $A\oslash_CB \defeq (A^c\times_C B)^{k/B} \cong (A\times_C
    B^k)^{c/A}$, so that we have a ``mixed pullback square''
    \begin{diagram}
      { A\oslash_C B & B \\ A & C \\};
      \mto{1-1}{1-2} \mto{2-1}{2-2} \eto{1-1}{2-1} \eto{1-2}{2-2}
      \comm{1-1}{2-2}
    \end{diagram}
  \item[(S)]   Suppose that we are given a pullback square
    \begin{diagram}
      { A \times_C B & A \\ B & C \\};
      \mto{1-1}{1-2} \mto{1-2}{2-2} \mto{1-1}{2-1} \mto{2-1}{2-2}
      \diagArrow{densely dotted,mmor}{1-1}{2-2}!\ell
    \end{diagram}
    in $\M$.  Then $X \defeq A \star_{A\times_C B} B$ exists.  The induced
    commuting square
    \begin{diagram}
      {X^{c/C}  & B^{c/C} \\ A^{c/C} & (A\times_CB)^{c/C} \\};
      \eto{1-1}{1-2} \eto{1-1}{2-1} \eto{2-1}{2-2} \eto{1-2}{2-2}
    \end{diagram}
    (constructed using Lemma~\ref{lem:opensquare}) is a restricted pushout.

    The dual of this statement also holds.
  \end{itemize}

  Given a pre-ACGW-category $(\C,\phi,c,k)$, a \emph{pre-ACGW-subcategory} $\D$
  is a sub-double category $\D$ of $\C$ (under \emph{both} double category
  structures in $\C$) such that $(\D,\phi|_\D,c|_\D,k|_\D)$ is also a
  pre-ACGW-category.  We say that $\D$ is \emph{full} if the vertical
  (resp. horizontal) category of $\D$ is a full subcategory of the vertical
  (resp. horizontal) category of $\C$.
\end{definition}

\begin{definition}
  An \emph{ACGW-category} is a pre-ACGW-category $(C,\phi,c,k)$ such that
  the following condition holds: 
  \begin{itemize}
  \item[(PP)] Restricted pushouts exist in $\M$.  These are compatible with
    cokernels, in the sense that a restricted pushout square
    \begin{diagram}
      { A & B \\ C & B \star_A C \\};
      \mto{1-1}{1-2}^f \mto{1-2}{2-2}^{g'} \mto{1-1}{2-1}_g \mto{2-1}{2-2}^{f'}
    \end{diagram}
    induces an isomorphism $A^{c/B} \rto^{\cong} C^{c/(B\star_A C)}$.  In
    addition, restricted pushouts are compatible with distinguished squares in
    the sense that given a diagram
    \begin{diagram}
      { C & A & B \\
        C' & A' & B' \\};
      \mto{1-2}{1-3} \mto{1-2}{1-1} \mto{2-2}{2-3} \mto{2-2}{2-1}
      \eto{1-1}{2-1} \eto{1-2}{2-2} \eto{1-3}{2-3}
      \dist{1-1}{2-2} \dist{1-2}{2-3}
    \end{diagram}
    there is an induced map $B\star_A C \rto B'\star_{A'} C'$ such that the two
    induced squares are distinguished.  These maps are compatible with
    compositions of distinguished squares.

    The dual statement for e-morphisms holds as well.
  \end{itemize}
\end{definition}

The definition of $\star$ implies that it behaves functorially like a pushout,
in the sense that given a diagram
\[C \mlto A \mrto^{f_1} B \mrto^{f_2} B'\] it follows that $(f_2f_1)' = f_2'f_1'$.

\begin{example}
  Let $\A$ be an abelian category.  Then $\A$ defines an ACGW-category for which
  $\M$ is the category of monomorphisms, $\E$ is the opposite category of the
  epimorphisms, distinguished squares are stable squares and pseudo-commutative squares
  are commutative squares.  Here, the ``mixed pullback'' of a diagram 
  \[A \mrto B \elto C\]
  is the factorizarion of the morphism $A \rto C$ into an epic followed by a
  monic.

  Axiom (S) translates to the following observation.  Assuming that we are
  working in $\Mod_R$, let $C$ be an $R$-module, and $A$ and $B$ be submodules
  of $C$.  Then $A\times_C B$ is $A \cap B$.  Then $X = A+B$, and the square
  \begin{diagram}
    { C/(A\cap B) & C/B \\ C/A & C/X \\};
    \fib{1-1}{1-2} \fib{1-2}{2-2} \fib{1-1}{2-1} \fib{2-1}{2-2}
  \end{diagram}
  is a pullback square.

  (PP) corresponds to the fact that an abelian category has all pushouts of
  monics, and such pushouts preserve cokernels.
\end{example}

\begin{example} 
  The category $\Var$ is a pre-ACGW-category.  Here we define the pseudo-commutative
  squares to be the pullback squares.  

  We check the axioms in turn.  Axiom (P) holds because varieties are closed
  under pullbacks.  In order to check Axiom (U) it suffices to check that given
  a variety $X$ and an open subvariety $U$ and a closed subvariety $Z$, we have
  \[Z \bs (Z \cap (X \bs U)) \cong U \cap ((X \bs Z)\cap U).\] This is true
  because it is true in the underlying topological spaces, where each one is
  simply $Z \times_X U$. Axiom (S) holds because it holds in the underlying
  topological spaces.  
\end{example}

\begin{non-example} 
  The CGW-category $\Var_{/k}^{sm}$ is \textsl{not} a pre-ACGW-category, since it
  is possible that the intersection of smooth subvarieties is not smooth.  This
  means that the m-morphisms are not closed under pullbacks.  
\end{non-example}

\begin{example} \label{ex:sch} The category $\Sch_{rf}$ is an ACGW-category,
  with the pseudo-commutative squares being pullback squares.  With this definition we
  can consider $\Var$ a pre-ACGW-subcategory of the ACGW-category $\Sch_{rf}$.
  That Axioms (P), (U), and (S) hold follows identically as for the case of
  varieties.

  Thus it remains to check Axiom~(PP), in particular that $\star$-products
  exist.  The pushout of schemes along open immersions produces a square of open
  immersions by the definition of a scheme; the pushout of schemes along closed
  immersion produces a square of closed immersions of schemes by \cite[Corollary
  3.9]{schwede}.  These are \emph{not} pushouts in the categories of closed/open
  immersions; these are pushouts in the entire category of schemes.  That this
  satisfies the conditions of (PP) follows from the universal property of
  pushouts.
\end{example}

We now consider an example that will be used in Section~\ref{sec:comparison}.

\begin{example} \label{ex:wr}
  Let $G$ be a discrete group, and consider the category $\FinSet \wr G$, with
  \begin{description}
  \item[objects] finite sets, and
  \item[morphisms] $S \rto T$ is a pair of functions $(f:S \rto T, f': S \rto G)$.
  \end{description}
  A composition of morphisms $(f,f'):S \rto T$ and $(g,g'): T \rto U$ is given
  by the pair consisting of $g\circ f$ and the composition
  \[S \rto \Gamma_f \subseteq S \times T \cong T \times S \rto^{g'\times f'} G \times G \rto^\mu
    G,\]
  where $\mu$ is the composition in $G$.

  More informally, we think of a morphism $S \rto T$ in $\FinSet \wr G$ as a map
  of finite sets $S \rto T$ together with a decoration by elements of $G$ on
  each element of $S$.  When we compose two such morphisms, we decorate each
  element by the multiplication of the two elements that it was decorated with
  in the composition: the decoration of the original element in the first
  morphism, and the decoration of its image in the second morphism.  The swap in
  the definition is necessary because composition of morphisms acts on the left,
  rather than the right.

  This can be demonstrated with the following picture:
  \begin{center}
    \begin{tikzpicture}
      \node (m-1-1) at (150:0.4) {$1$};
      \node (m-1-2) at (30:0.4) {$2$};
      \node (m-1-3) at (-90:0.4) {$3$};
      \draw (0,0) circle (0.8);
      \node (m-1-100) at (-90:1.8) {$A$};

      \begin{scope}[xshift=12em]
        \node (m-2-1) at (150:0.4) {$1$};
        \node (m-2-2) at (30:0.4) {$2$};
        \node (m-2-3) at (-90:0.4) {$3$};
        \draw (0,0) circle (0.8);
        \node (m-2-100) at (-90:1.8) {$B$};
      \end{scope}

      \begin{scope}[xshift=24em]
        \node (m-3-1) at (-0.375,0) {$1$};
        \node (m-3-2) at (0.375,0) {$2$};
        \draw (0,0) circle (0.65);
        \node (m-3-100) at (-90:1.8) {$C$};
      \end{scope}

      \diagArrow{->}{1-100}{2-100}^f \diagArrow{->}{2-100}{3-100}^g

      \diagArrow{densely dashed, ->, bend left}{1-1}{2-1}!{g_1}
      \diagArrow{densely dashed, ->}{1-2}{2-1}!{g_2}
      \diagArrow{densely dashed, ->, out=-10, in=-170}{1-3}{2-2}!{g_3}

      \diagArrow{densely dashed, ->, bend left}{2-1}{3-1}!{h_1}
      \diagArrow{densely dashed, ->}{2-2}{3-2}!{h_2}
      \diagArrow{densely dashed, ->, bend right}{2-3}{3-2}!{h_3}

      \diagArrow{densely dotted, ->, out=40,in=140}{1-1}{3-1}!{h_1g_1}
      \diagArrow{densely dotted, ->, bend left}{1-2}{3-1}!{h_1g_2}
      \diagArrow{densely dotted, ->, out=-20, in=-160}{1-3}{3-2}!{h_2g_3}
    \end{tikzpicture}
  \end{center}
  Here, $A,B,C\in \FinSet\wr G$ are sets, illustrated by the elements in the
  circles.  The dashed lines above morphisms $f$ and $g$ illustrate where
  elements map under $f$ and $g$, together with decorations.  The labeled dotted
  lines are the data of $gf$.
  
  When $G$ is trivial $\FinSet \wr G \cong \FinSet$.  By forgetting the
  decoration we get a functor $\FinSet \wr G \rto \FinSet$.

  We define an ACGW-structure on $\FinSet\wr G$ by declaring the e-morphisms and
  m-morphisms to both be all maps which are injective on the underlying sets,
  and declare a square to be distinguished if it commutes in the ambient
  category and if it is distinguished when mapped down to $\FinSet$.  This makes
  $\FinSet\wr G$ an ACGW-category.
\end{example}

We finish this section with a couple of technical lemmas which will be useful
later.

\begin{lemma} \label{lem:(B)}
  Let $\C$ be a pre-ACGW category. Given a diagram 
  \begin{diagram-numbered}{eq:pullingback}
    { C & B & A \\
      C' & B' & A' \\};
    \eto{1-1}{2-1} \eto{1-2}{2-2} \eto{1-3}{2-3}
    \mto{1-2}{1-3} \mto{2-2}{2-3} \eto{1-1}{1-2} \eto{2-1}{2-2}
    \comm{1-2}{2-3}
  \end{diagram-numbered}
  where $C \cong C'\times_{B'} B$ there exists a cube
  \begin{diagram}
    { C&& D\\
      & B&& A\\
      C' && D' \\
      & B' && A' \\};
    \mto{1-1}{1-3}
    \mto{3-1}{3-3} \mto{4-2}{4-4}
    \eto{1-1}{3-1} \eto{1-3}{3-3}  \eto{2-4}{4-4}
    \over{2-2}{2-4}  \mto{2-2}{2-4}
    \over{2-2}{4-2} \eto{2-2}{4-2}
    \eto{1-1}{2-2} \eto{1-3}{2-4} \eto{3-1}{4-2} \eto{3-3}{4-4}
  \end{diagram}
  where the top and bottom squares are distinguished, the left and right squares
  are pullbacks, and the front and back face are pseudo-commutative.

  The statement with the roles of e-morphisms and m-morphisms swapped also holds.
\end{lemma}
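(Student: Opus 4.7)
The plan is to construct the cube in three stages: apply Lemma~\ref{lem:axiomC} to fill in the top and bottom distinguished squares with new vertices $D, D'$; use Axiom~(P) to form a candidate for $D$ as a pullback in $\E$ and identify it with the $D$ from Lemma~\ref{lem:axiomC}; and verify the back face is a pullback using the mixed-pullback construction $\oslash$ of Axiom~(U).

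For the first stage, apply Lemma~\ref{lem:axiomC} to $C\erto B\mrto A$ and to $C'\erto B'\mrto A'$ to obtain $D$, $D'$, and the top and bottom distinguished squares. Recall that the construction realizes $D$ as $c$ applied to the composite m-morphism $B^{k/C}\mrto B\mrto A$, where $B^{k/C}\mrto B$ is $k(C\erto B)$; similarly for $D'$.

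For the second stage, since $D'\erto A'$ and $A\erto A'$ are both e-morphisms, Axiom~(P) produces a pullback $P\defeq D'\times_{A'}A$ in $\E$, equipped with e-morphisms $P\erto A$ and $P\erto D'$. To show $P\cong D$, exhibit an m-morphism $C\mrto P$ making the square with $B\mrto A$ distinguished; by uniqueness in Lemma~\ref{lem:axiomC} this forces $P\cong D$, the right face is automatically a pullback in $\E$, and the desired $D\erto D'$ is the pullback projection. The m-morphism $C\mrto P$ is realized as the mixed pullback $B\oslash_A P$ of Axiom~(U), whose explicit formula $(B^{c/A}\times_A P)^{k/pr_P}$ reduces to $C$ after unwinding the definition of $P$, using the front pullback $B\cong B'\times_{A'}A$, the given left pullback $C\cong C'\times_{B'}B$, and the bottom distinguished square identification $C'^{c/D'}\cong B'^{c/A'}$.

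Finally, for the back face $\dsq{C}{D}{C'}{D'}{}{}{}{}$, commutativity follows from the identifications of the second stage together with monicity of $D'\erto A'$ (Axiom~(M)): both composites $C\to D'$ agree after further composition with $D'\erto A'$, hence agree already at $D'$. The pullback property is the identification $C\cong C'\oslash_{D'}D$, proved analogously to the previous stage using the top distinguished square identification $C^{c/D}\cong B^{c/A}$. The dual statement (with $\M$ and $\E$ swapped) follows symmetrically by interchanging $c$ and $k$ throughout. The principal obstacle is the $\oslash$-formula manipulation in the second and third stages, which requires careful pasting of the given pullbacks with the distinguished-square cokernel identifications to land at $C$.
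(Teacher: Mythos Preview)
Your approach has a substantive gap. You write ``using the front pullback $B\cong B'\times_{A'}A$,'' but this is not a hypothesis: the front square is only assumed to be a \emph{commutative} square in the enhanced double category, which via Axiom~(U) means precisely that $B^{c/A}\cong (B')^{c/A'}\times_{A'}A$ in $\E$, \emph{not} that $B$ itself is a pullback. In an abelian category, for instance, the commutative-square hypothesis is just ordinary commutativity, and $B$ need not be $B'\times_{A'}A$. Your reduction of $B\oslash_A P$ to $C$ therefore rests on an unjustified identification. The computation can be repaired by working with $B^{c/A}$ rather than $B$, but you must then also verify that the resulting square with vertices $C,P,B,A$ is \emph{distinguished}, not merely commutative: the mixed pullback of Axiom~(U) only guarantees the latter, whereas uniqueness in Lemma~\ref{lem:axiomC} requires the former.

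A second problem is your back-face argument: ``both composites $C\to D'$ agree after further composition with $D'\erto A'$'' presupposes an ambient category in which an e-morphism and an m-morphism can be composed and the results compared. In a general double category no such composites exist, so monicity of $D'\erto A'$ cannot be invoked this way.

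The paper's argument avoids both issues by working entirely through the equivalence $c:\Ar_\circlearrowleft\M\rto\Ar_\times\E$ of Axiom~(U). It applies $c^{-1}$ to the left pullback square in $\E$ to obtain a commutative square with m-morphisms $C^{k/B}\mrto B$ and $(C')^{k/B'}\mrto B'$; pastes this horizontally with the given front commutative square; and then applies $c$ to the composite to obtain the right face directly as a pullback in $\E$, together with the map $D\erto D'$, in one step---no separate construction of $P$ and identification $P\cong D$ is needed. The back face is handled by the same translation: apply $c$ and verify a pullback condition purely inside $\E$ by pasting, which is a diagram chase with monics in a single category.
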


\begin{proof}
  Let
  \[D = (C^{k/B})^{c/A} \qqand D' = ((C')^{k/B'})^{c/A'}\]
  Applying $c^{-1}$ to the left-hand square in (\ref{eq:pullingback}) produces a diagram
  \begin{diagram}
    { C^{k/B} & B & A \\
      (C')^{k/B'} & B' & A'\\};
    \mto{1-2}{1-3} \mto{2-2}{2-3} \eto{1-2}{2-2} \eto{1-3}{2-3}
    \eto{1-1}{2-1} \mto{1-1}{1-2} \mto{2-1}{2-2}
    \comm{1-2}{2-3} \comm{1-1}{2-2}
  \end{diagram}
  which corresponds, under $c$, to the pullback square on the right of the cube.
  Lemma~\ref{lem:axiomC} shows that the squares on the top and bottom of the
  cube must be distinguished.  To finish the proof of the lemma it remains to
  check that the back face of the cube is distinguished.  To prove this it
  suffices to check that, after applying $c$ to the m-morphisms in the diagram,
  it corresponds to a pullback square.  This is a straightforward diagram chase
  using the fact that all morphisms are monic.
\end{proof}

\begin{lemma} \label{lem:pbeq} Let $\C$ be a pre-ACGW category.  In any
  pseudo-commutative square
  \[\csq{A}{B}{C}{D}{f}{}{}{f'}\]
  if $f'$ is an isomorphism, so is $f$.
\end{lemma}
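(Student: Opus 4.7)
The plan is to apply the equivalence $c:\Ar_\circlearrowleft\M \rto \Ar_\times\E$ provided by axiom~(U) to the given commutative square, producing a pullback square in $\E$ with corners $A^{c/f}$, $B$, $C^{c/f'}$, and $D$.  The proof then reduces to showing that $C^{c/f'}\cong\initial$ (which follows since $f'$ is iso), that this forces $A^{c/f}\cong\initial$, and finally that $A^{c/f}\cong\initial$ forces $f$ itself to be an isomorphism.

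The first reduction would proceed by pasting two distinguished squares vertically: the square $\dsq{C}{C}{C}{D}{1_C}{1_C}{f'}{f'}$ produced by axiom~(I) for the isomorphism $f'$, placed underneath the square $\dsq{\initial}{\initial}{C}{C}{}{}{}{1_C}$ produced by axiom~(K) applied to the e-morphism $\initial\erto C$ (whose $k$-image is $1_C$ by the unlabelled technical lemma immediately preceding Lemma~\ref{lem:axiomC}).  This yields the distinguished square $\dsq{\initial}{\initial}{C}{D}{}{}{}{f'}$, and the uniqueness clause of axiom~(K) identifies $C^{c/f'}$ with $\initial$.  Substituting into the pullback square exhibits $A^{c/f}$ as the pullback of $B\erto D\elto\initial$ in $\E$; the comparison map $A^{c/f}\erto\initial$ is monic by axiom~(M) and is split by the unique e-morphism $\initial\erto A^{c/f}$, so it is an isomorphism.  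Hence $A^{c/f}\cong\initial$, and axiom~(K) applied to $f$ yields a distinguished square $\dsq{\initial}{\initial}{A}{B}{}{}{}{f}$.

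The last step is to apply $k$ to this distinguished square, viewed as a morphism in $\Ar_\square\E$ from $\initial\erto A$ to $\initial\erto B$.  Its image in $\Ar_\triangle\M$ is a morphism from $1_A$ to $1_B$---by definition of $\Ar_\triangle\M$, a commutative square whose top is an isomorphism $A\rto B$, whose sides are $1_A$ and $1_B$, and whose bottom is inherited from the bottom m-morphism of the distinguished square, namely $f$.  Commutativity of the square then forces $f$ to coincide with the top isomorphism, so $f$ is an isomorphism.  The main obstacle is executing this last step rigorously: one has to track through the functoriality of $k$ as an equivalence of categories and verify that it does indeed send the bottom m-morphism of a distinguished square to the bottom of the resulting $\Ar_\triangle\M$-square.
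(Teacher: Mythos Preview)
Your argument is correct and the acknowledged gap can indeed be filled: the equivalence $k$ sends an object $g:X\erto Y$ to an m-morphism with codomain $Y$, so on morphisms it must carry the bottom m-morphism of a distinguished square to the bottom of the resulting square in $\Ar_\triangle\M$; commutativity then forces $f$ to equal the top isomorphism.

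However, the paper takes a shorter route that avoids all three of your reductions.  Rather than applying $c:\Ar_\circlearrowleft\M\to\Ar_\times\E$ (viewing the square as a morphism between the horizontal m-morphisms $f$ and $f'$), the paper applies $k:\Ar_\circlearrowleft\E\to\Ar_\times\M$ (viewing the same square as a morphism between the \emph{vertical} e-morphisms $A\erto C$ and $B\erto D$).  The point is that $k$, like $c$, fixes codomains on objects, so the bottom of the resulting pullback square in $\M$ is $f'$ itself.  Since $f'$ is an isomorphism and the square is a pullback, the top map is also an isomorphism, hence the image of our square is an isomorphism in $\Ar_\times\M$.  As $k$ is an equivalence of categories it reflects isomorphisms, so the original commutative square is already an isomorphism in $\Ar_\circlearrowleft\E$, i.e.\ both horizontal components $f,f'$ are isomorphisms.

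The contrast is instructive: your choice of $c$ carries the e-morphism $B\erto D$ (not known to be an iso) into the image square, forcing you to argue indirectly via $A^{c/f}\cong\initial$; the paper's choice of $k$ carries $f'$ (the given iso) into the image square, and the conclusion drops out immediately from ``equivalences reflect isomorphisms.''  Your detour through $A^{c/f}\cong\initial$ is a perfectly valid alternative, but it trades one clean categorical step for three.
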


\begin{proof}
  Apply $k$ vertically.  This produces a pullback square
  \begin{diagram}
    { A^k & B^k \\ C & D \\};
    \arrowsquare[mmor]{(f')^k}{}{}{f}
  \end{diagram}
  Since $f'$ is an isomorphism, $(f')^k$ must be, as well.  Thus the pseudo-commutative
  square is mapped to an isomorphism inside $\Ar_\times \M$; in particular, both
  horizontal morphisms in the pseudo-commutative square must be isomorphisms.  Thus
  $f$ is an isomorphism, as desired.
\end{proof}

\section{D\'evissage} \label{sec:devissage}

We can now prove a direct analog to Quillen's d\'evissage \cite[Theorem
5.4]{quillen}.  Analogously to the case of exact and abelian categories, the
$K$-theory of an ACGW-category is defined to be the $K$-theory of the underlying
CGW-category.

As the definition of ``creation of colimits'' appears to differ from context to
context we include the definition needed for the next theorem here:
\begin{definition}
  A functor $F: \C \rto \D$ \emph{creates restricted pushouts} if for every
  diagram
  \[B \lto A \rto C\]
  in $\C$, if
  \[F(B) \lto F(A) \rto F(C)\] has a restricted pushout in $\D$, then there
  exists a $D\in\C$ such that $D$ is the pushout of the original diagram, and
  $F(D)$ is the pushout of its image under $F$.
\end{definition}

\begin{theorem} \label{thm:devissage} Let $\A$ be a full pre-ACGW-subcategory of
  the pre-ACGW-category $(\B,\phi,c,k)$, closed under subobjects and quotients
  (see Definition~\ref{def:closure}), such that the inclusion
  $\A \cap \E \rto \E$ creates restricted pushouts.  Suppose that for all
  objects $B \in \B$ there is a sequence
  \[
  \emptyset = B_0 \mrto B_1 \mrto \cdots \mrto
  B_n = B
  \]
  such that $B_{i-1}^{c/B_i}$ is in $\A$ for all $i
  =1,\ldots n$. Then the inclusion functor $\A \rcofib \B$ induces an
  equivalence $K(\A) \rto K(\B)$.
\end{theorem}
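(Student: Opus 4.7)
The plan is to mirror Quillen's classical proof of d\'evissage, using the $Q$-construction for CGW-categories developed in Section~\ref{sect:q_construction}. Specifically, I will apply Quillen's Theorem A to the inclusion $F \colon Q\A \to Q\B$, reducing the statement to showing that each comma category $F/B$ is contractible for $B \in \B$.

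First I translate. By Lemma~\ref{lem:preorder}, $Q\B_{/B}$ is equivalent to the preorder $L_B\B$ of triples $B_1 \mrto B_2 \mrto B$, in which the source of the corresponding morphism in $Q\B$ is the ``difference'' $B_1^{c/B_2}$. Since $F$ is the inclusion of a full subcategory, $F/B$ identifies with the full sub-preorder $L_B^\A \subseteq L_B\B$ consisting of those triples with $B_1^{c/B_2} \in \A$. Contractibility of $F/B$ now reduces to showing that $L_B^\A$ has contractible nerve.

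I then induct on the length $n$ of the filtration of $B$. If $n \le 1$, then combining the hypothesis $B_0^{c/B_1} \in \A$ with the observation that $f \colon \initial \mrto B$ has $f^c = 1_B$ forces $B \in \A$, so the triple $(\initial \mrto B \mrto B)$ is terminal in $L_B^\A$ and the preorder is contractible. For the inductive step, set $B' = B_{n-1}$; by hypothesis $(B')^{c/B} \in \A$ and $B'$ admits a filtration of length $n-1$. I construct a retraction $\rho \colon L_B^\A \to L_{B'}^\A$ by pullback along $B' \mrto B$, sending $(B_1 \mrto B_2 \mrto B)$ to $(B_1 \times_B B' \mrto B_2 \times_B B' \mrto B')$, using Axiom~(P) to form the pullbacks. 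Lemma~\ref{lem:(B)} together with Axiom~(U) implies that $(B_1 \times_B B')^{c/(B_2 \times_B B')}$ is canonically an $\M$-subobject of $B_1^{c/B_2}$, so $\rho$ lands in $L_{B'}^\A$ by closure under subobjects. The section $\iota \colon L_{B'}^\A \to L_B^\A$ post-composes the final morphism with $B' \mrto B$. A zigzag of natural transformations between $\iota\rho$ and $\id_{L_B^\A}$ is furnished by the pullback projections $B_i \times_B B' \mrto B_i$ combined with the $\E$-pushout gluing $B_2$ and $B_2 \times_B B'$ that is provided by Axiom~(S); by induction $L_{B'}^\A$ is contractible, and hence so is $L_B^\A$.

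The main obstacle is the bookkeeping in the inductive step: one must verify that every intermediate triple appearing in the zigzag of natural transformations still lies in $L_B^\A$, i.e.\ that the differences of the glued layers remain in $\A$. This is precisely the role of the hypothesis that $\A\cap\E \to \E$ creates pushouts, substituting for closure under extensions in Quillen's classical argument: it guarantees that the $\E$-pushouts used to compare layers produce objects whose differences stay within $\A$. Once this bookkeeping is in hand, Theorem A delivers $BQ\A \simeq BQ\B$, giving the desired equivalence $K(\A) \simeq K(\B)$.
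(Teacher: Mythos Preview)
Your proposal is correct and follows essentially the same route as the paper: Quillen's Theorem~A applied to $Q\A \to Q\B$, the translation via Lemma~\ref{lem:preorder} to the preorders $L_B^\A$, induction on filtration length, and the homotopy equivalence in the inductive step via a pullback retraction and a zigzag of natural transformations. The paper makes the intermediate functor in the zigzag explicit, namely $s(B_1 \mrto B_2 \mrto B) = (B_1 \times_B B' \mrto B_2 \mrto B)$, and proves $s$ lands in $L_B^\A$ using Axiom~(S) together with the pushout-creation hypothesis; your closing paragraph correctly anticipates exactly this check. One minor difference: you argue directly that $(B_1\times_B B')^{c/(B_2\times_B B')}$ is a subobject of $B_1^{c/B_2}$ to show $\rho$ is well-defined, whereas the paper deduces well-definedness of $r$ from that of $s$.
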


\begin{proof}
  The proof proceeds exactly as in \cite{quillen}. Let $\iota:\A \rto \B$ be the
  inclusion of $\A$ into $\B$.  We would like $\iota$ to give a homotopy
  equivalence
  \[B Q \A \rto^{BQ\iota} B Q \B. 
  \] By Quillen's Theorem $A$ it is enough to show that $Q \iota_{/B}$ is
  contractible for any $B \in \B$.  Since $\A$ is closed under subobjects,
  $Q \iota_{/B}$ is the full subcategory of $Q\B_{/B}$ of those objects
  \[A_1 \erto B_2 \mrto B \]
  where $A_1 \in \A$.  By Lemma~\ref{lem:preorder}, $Q\B_{/B}$ is a
  preorder, and thus $Q \iota_{/B}$ is also a preorder.

  By the hypothesis of the theorem, there exists a sequence
  \[
    \emptyset = B_0 \mrto B_1 \mrto \cdots
    \mrto B_n = B
  \]
  with $B_{i-1}^{c/{B_i}} \in \A$ for all $i=1,\ldots,n$.  We prove that $Q\iota_{/B_n}$
  is contractible by induction on $n$.

  We have $B_1 \in \A$; in this case $Q\iota_{/B_1}$ is contractible,
  since it has the terminal object $B_1 \erto^{=} B_1 \mrto^{=} B_1$.

  To prove the inductive step it suffices to show that for any $h:B \mrto B'$ with
  $B^c \in \A$  the map $Q\iota_{/B} \rto Q\iota_{/B'}$ induced by
  postcomposition is a homotopy equivalence.  Let $L_B^\A\B$ be the full
  subcategory of $L_B\B$ containing those objects $B_1 \mrto B_2 \mrto
  B$ where $B_1^{c/B_2} \in\A$.  By Lemma~\ref{lem:preorder} it
  suffices to check that the functor $\iota:L_B^\A\B \rto L_{B'}^\A\B$ induced by
  postcomposition with $h$ is a homotopy equivalence.  

  Let $B_1 \mrto B_2 \mrto B'$ be any object of $L_{B'}^\A\B$.  We have the
  diagram
  \begin{diagram}
    { B_1\times_{B'} B & B_2\times_{B'} B & B \\
      B_1 & B_2 & B'\\};b
    \mto{1-1}{1-2}^{g'} \mto{1-2}{1-3}
    \mto{2-1}{2-2}
    \mto{2-2}{2-3}
    \mto{1-1}{2-1}
    \mto{1-2}{2-2}
    \mto{1-3}{2-3}^{h}
  \end{diagram}
  where both squares are pullback squares.  We define functors
  \begin{align*}
    r:L_{B'}^\A\B\rto L_B^\A\B&\qquad \makeshort{ r(B_1\mrto B_2\mrto B') =
      B_1\times_{B'}B \mrto^{g'} B_2\times_{B'} B\mrto B}. \\
    s:L_{B'}^\A\B \rto L_{B'}^\A\B&\qquad \makeshort{ s(B_1 \mrto B_2 \mrto B')
      = B_1\times_{B'}B \mrto B_2 \mrto B'.}
  \end{align*}
  If $s$ is well-defined (so $(B_1\times_{B'}B)^{c/B_2} \in \A$) then so is $r$,
  because $(B_1\times_{B'}B)^{c/g'}$ is a subobject of
  $(B_1\times_{B'}B)^{c/B_2}$. Thus we just need to check that $s$ is
  well-defined.

  First, by Axiom (U) 
  there exists a map $(B_2\times_{B'}B)^{c/B_2} \mrto B^{c/B'}$; since
  $B^{c/B'}\in \A$, it follows that $(B_2\times_{B'}B)^{c/B_2}$ must be, as
  well.  Now by Axiom (S),
  $(B_1\times_{B'}B)^{c/B_2} \cong B_1^{c/B_2} \star_{Y^{c/B_2}}
  (B_2\times_{B'}B)^{c/B_2}$, where
  $Y = B_1\star_{B_1\times_{B'}B} (B_2\times B' B)$, which exists by Axiom (S);
  since the inclusion $\A\cap \E \rto \E$ creates restricted pushouts, if each
  component of this pushout is in $\A$, then so is $(B_1\times_{B'}B)^{c/B_2}$.
  By assumption $B_1^{c/B_2}\in \A$ and by the above
  $(B_2\times_{B'}B)^{c/B_2}\in \A$, so $Y^{c/B_2}$ is also in $\A$ (as $\A$ is
  closed under subobjects).  Thus $(B_1\times_{B'}B)^{c/B_2}\in \A$, and $s$ is
  well-defined, as desired.

  Redrawing the above diagram, we have the following diagram:
  \begin{diagram}
    { B_1 & B_2 & B' & 1_{L^\A_{B'}\B}\\
      B_1\times_{B'} B & B_2 & B' &s\\
      B_1\times_{B'} B & B_2\times_{B'}B & B'& \iota r\\
    };
    \mto{1-1}{1-2} \mto{1-2}{1-3}
    \mto{2-1}{2-2} \mto{2-2}{2-3}
    \mto{3-1}{3-2} \mto{3-2}{3-3}
    \mto{2-1}{1-1} \eq{1-2}{2-2} \eq{1-3}{2-3}
    \eq{2-1}{3-1} \mto{3-2}{2-2} \eq{2-3}{3-3}

    \To{1-4}{2-4} \To{3-4}{2-4}
  \end{diagram}
  The upper row of squares gives a natural transformation $1_{L^\A_{B'}}
  \B \Rto s$; the lower row gives a natural transformation $\iota r \Rto
  s$.  Since natural transformations realize to homotopies, we see that $\iota r$
  is homotopic to the identity on $L_{B'}^\A\B$.  On the other hand, $r\iota$ is
  equal to the identity on $L_B^\A\B$, so these produce a homotopy equivalence
  of spaces, as desired.
\end{proof}

We can now apply this theorem to compare the $K$-theory of varieties to the
$K$-theory of reduced schemes of finite type.

\begin{example} \label{ex:var->sch} 
  We use the dual of Theorem~\ref{thm:devissage} to prove that $K(\Var) \simeq
  K(\Sch_{rf})$.  

   $\Var$ is a sucategory of $\Sch_{rf}$ closed under subobjects and
  quotients; the inclusion $\Var\cap \M \rto \M$ creates pushouts since the
  pushout of varieties along closed immersions is a variety \cite[Cor.~3.9]{schwede}.  To apply the
  theorem we must show that for every reduced scheme of finite type $X$ there
  exists a filtration
  \[X_0 \erto X_1 \erto \cdots \erto X_n = X\]
  such that $X_i \smallsetminus X_{i-1}$ is a variety for all $i$.  Since $X$ is
  of finite type there exists a finite cover of $X$ by affine opens
  $U_1,\ldots,U_n$; each of these is reduced since $X$ is and separated because
  each is affine.  
  We then define
  \[X_i = \bigcup_{j=1}^i U_i.\]
  This gives a finite open filtration of $X$; it remains to show that $X_i
  \smallsetminus X_{i-1}$ is a variety for all $i$.  Note that $X_i
  \smallsetminus X_{i-1} = U_i \smallsetminus \bigcup_{j=1}^{i-1} (U_j \cap
  U_i)$.  This is reduced, separated and of finite type, and is thus a variety,
  as desired. 
\end{example}

\section{Relationship with the $S_\bullet$-construction}\label{subdivision}

In this section we relate our $Q$-construction to a variation of the
$S_\dotp$-construction of Waldhausen \cite{waldhausen}.  We will show that the
$Q$-construction is equivalent to the construction defined for $\Var_{/k}$ in
\cite{campbell}.  As the $S_\dotp$-construction applied to an abelian category
is not abelian, it is unreasonable to expect that in all cases it will be
possible to iterate the construction.  However, as the $S_\dotp$-construction
for ACGW-categories produces a CGW-category, it is possible to iterate it
twice.  It turns out that this is sufficient to prove a cofiber sequence and the
relationship to the $Q$-construction.

\begin{remark}
  In the interest of keeping this section short and readable, we do not state
  definitions or results in the full generality that would be analogous to
  Waldhausen's exposition.  Instead, we restrict attention to the special cases
  of interest to us.  
\end{remark}

We begin by presenting the definition of the $S_\dotp$ construction for CGW-categories. 

\begin{definition}
  Let $\C$ be a CGW-category. Define $S_\dotp \C$ to be the simplicial set with $n$ simplices $S_n \C$ given by diagrams in the double category $\C$
  \begin{diagram}
    {C_{00} & C_{01} & C_{02} & \cdots & C_{0(n-1)} & C_{0n}\\
       & C_{11} & C_{12} & \cdots & C_{1(n-1)} & C_{1n}\\
       &  &  &  &  & \vdots\\
     &  &  &  &  & C_{nn}\\};
    \mto{1-1}{1-2} \mto{1-2}{1-3} \mto{1-3}{1-4} \mto {1-4}{1-5} \mto{1-5}{1-6}
     \mto{2-2}{2-3} \mto{2-3}{2-4} \mto {2-4}{2-5} \mto{2-5}{2-6}
    \eto{2-2}{1-2} \eto{2-3}{1-3} \eto {2-5}{1-5} \eto{2-6}{1-6}
    \eto{3-6}{2-6}
    \eto{4-6}{3-6}
  \end{diagram}
  such that
  \begin{enumerate}
  \item $C_{ii} = \initial$ for all $i$, and 
  \item Every subdiagram
    \begin{diagram}
      {C_{ki} & C_{kl} \\
        C_{ji} & C_{jl}\\};
      \mto{1-1}{1-2} \mto{2-1}{2-2} \eto{2-1}{1-1} \eto{2-2}{1-2}\dist{1-1}{2-2}
    \end{diagram}
    for  $k < j$ and $i < l$ is distinguished. 
  \end{enumerate}
  The face and degeneracies are defined as in the usual $S_\dotp$-construction:
  the $i$th face map is deleting the $i$th row and $i$th column, and the
  degeneracies are given by repetition.  (For more on the traditional
  $S_\dotp$-construction, see \cite[Section 1.3]{waldhausen}; for a more
  explicit description of how this works in the case of varieties, see the
  $\tilde S_\dotp$-construction in \cite[Definition 3.31]{campbell}.)
\end{definition}

\begin{remark}
The arrow directions in the diagram are chosen to agree with existing examples.
\end{remark}

\begin{example}
  When $\C = \Var$, then $S_\dotp \C$ is exactly the $\widetilde{S}_\dotp$ construction of \cite[Def. 3.31]{campbell}.
\end{example}

\begin{definition}
  Given a CGW category $(\C,\M,\E)$ define
  \[
  K^S (\C) := \Omega |S_\dotp \C|
  \]
\end{definition}

\begin{remark}
When $\C$ is, for example, an exact category this agrees with Waldhausen's $S_\dotp$-construction by Corollary 2 following \cite[Lem. 1.4.1]{waldhausen}. 
\end{remark}

\begin{remark}\label{agreement}
  In \cite{campbell}, the author introduced the $\widetilde{S}_\dotp$
  construction, which is a version of the Waldhausen construction that works on
  $SW$-categories \cite[Defn~3.23]{campbell}. These categories are meant to
  encode cutting and pasting, just as CGW categories do. In fact, in that paper
  there are three notions of such categories that appear: 1. pre-subtractive
  category 2. subtractive categories and 3. SW-categories. Pre-subtractive are
  closely related to CGW-categories; they are categories where one can define a
  higher geometric object that encodes cutting and pasting. Subtractive
  categories correspond to ACGW-categories: certain pushouts and pullbacks are
  required to exist. Finally, $SW$-categories, like Waldhausen categories, are
  allowed to have weak equivalences other than isomorphisms. Subtractive
  categories satisfy the axioms for ACGW-categories, and in this case the
  corresponding $S_\dotp$ constructions are equivalent and, in fact, equal; in
  such situations we will say that the ACGW-category \emph{arises from a
    subtractive category}. An ACGW-category where the distinguished squares are
  \emph{cartesian} in the underlying category $\mathcal{A}$, is an
  $SW$-category, and we may use the full machinery of $SW$-categories. This is
  true, for example, for $\mathbf{Sch}_{rf,/k}$ and $\mathbf{FinSet}_\ast$.
\end{remark}

As expected, this new definition of $K$-theory is equivalent to the original one.

\begin{theorem} \label{thm:S.=Q}
  Let $(\C, \M, \E)$ be a CGW category. Then there is a weak equivalence of topological spaces
  \[
  K^S(\C) \rwe K(\C)
  \]
  induced by a map of simplicial sets $S_\dotp\mc{C} \rto Q \mc{C}$. 
\end{theorem}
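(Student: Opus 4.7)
The plan is to use Waldhausen's edgewise-subdivision trick.  Since $|X_\bullet| \cong |\mathbf{sd}^\ast X_\bullet|$ by Segal's theorem cited above, it suffices to produce a map of simplicial sets $\mathbf{sd}^\ast \ob S_\bullet \C \rto N_\bullet Q\C$ that is a weak equivalence on geometric realization, and then to apply $\Omega$ to the composite $|\ob S_\bullet \C| \cong |\mathbf{sd}^\ast \ob S_\bullet \C| \simeq |Q\C|$.  The map $S_\bullet \C \rto Q\C$ of the statement is to be obtained from this composite.

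I would define the subdivision map on $n$-simplices as follows.  An $n$-simplex of $\mathbf{sd}^\ast \ob S_\bullet \C$ is a $(2n+1)$-simplex of $\ob S_\bullet \C$, consisting of objects $\{C_{ab}\}_{0 \le a \le b \le 2n+1}$ with $C_{aa} = \initial$ and all sub-squares distinguished.  Relabel $[2n+1] \cong [n]^{\op} \ast [n]$ as $\underline{n}, \ldots, \underline{0}, \overline{0}, \ldots, \overline{n}$ and set $A_i := C_{\underline{i}, \overline{i}}$; then send the $(2n+1)$-simplex to the chain $A_0 \rto A_1 \rto \cdots \rto A_n$ in which the $i$th morphism is represented by the zig-zag $A_{i-1} \elto C_{\underline{i}, \overline{i-1}} \mrto A_i$ extracted from the appropriate faces of the $S_\bullet$-diagram.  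The further off-diagonal entries $C_{\underline{i}, \overline{j}}$ with $j < i-1$ supply the $X$-objects needed to witness the higher composites in $Q\C$, and Lemma~\ref{lem:axiomC} guarantees that these agree with the canonical composites.  Functoriality under face and degeneracy maps follows by direct inspection from the definition of $\mathbf{sd}$.  To see the map is a weak equivalence, I would reconstruct a $(2n+1)$-simplex of $\ob S_\bullet \C$ from an $n$-simplex of $N_\bullet Q\C$ by iterated application of Axiom~(K) and Lemma~\ref{lem:axiomC}: the diagonal entries come from the objects of the chain, the sub-diagonal entries from the chosen representatives of each morphism in $Q\C$, and the remaining entries are filled in by repeated $c$- and $k$-equivalences, with all required distinguished squares automatically in place.

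The main obstacle is that the reconstructed off-diagonal objects are only determined up to unique isomorphism, so the map above is not a strict bijection of simplicial sets but only a bijection up to coherent choices.  Handling this cleanly will require either making a uniform functorial choice of canonical representatives for $c$ and $k$ (and checking compatibility with the simplicial structure maps), or arguing directly that the fiber of the reconstruction over each $n$-simplex of $N_\bullet Q\C$ is a contractible groupoid of isomorphisms, so the map is a levelwise weak equivalence and hence induces an equivalence on geometric realization.  With this in hand, composing with Segal's homeomorphism and looping yields the desired weak equivalence $K^S(\C) \simeq K(\C)$.
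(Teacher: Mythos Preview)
Your strategy is the same as the paper's: use Segal's edgewise subdivision and compare $\mathbf{sd}\, S_\bullet\C$ with the nerve of $Q\C$ by extracting the ``diagonal'' objects $A_i = C_{\underline{i},\overline{i}}$ and the zig-zags $A_{i-1}\elto C_{\underline{i},\overline{i-1}}\mrto A_i$.  You have also correctly isolated the one genuine difficulty, namely that the inverse reconstruction is only determined up to unique isomorphism, so one does not get a strict bijection of simplicial \emph{sets}.

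The paper's resolution of this point is exactly your second proposed fix, made precise in Waldhausen's language.  Rather than working with the simplicial set $\ob S_\bullet\C$ and the nerve $N_\bullet Q\C$, one thickens both sides to simplicial \emph{categories}: replace $S_\bullet\C$ by $iS_\bullet\C$ (objects are $S_n$-diagrams, morphisms are isomorphisms of diagrams) and replace $Q\C$ by $iQ_\bullet\C$ (strings in $Q\C$ with vertical isomorphisms).  The extraction map $\mathbf{sd}\, iS_\bullet\C \to iQ_\bullet\C$ is then a levelwise equivalence of categories, so a weak equivalence on realization; and one recovers $|iQ_\bullet\C|\simeq |Q\C|$ by Waldhausen's Swallowing Lemma \cite[Lem.~1.6.5]{waldhausen}, while $|iS_\bullet\C|\simeq |\ob S_\bullet\C|$ since each level has contractible isomorphism groupoid.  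A final $2$-of-$3$ argument in a small commutative square of equivalences then gives the strict simplicial-set map $S_\bullet\C\to Q\C$ of the statement.  Your outline is correct; the missing ingredient is precisely the passage to the $i$-versions together with the Swallowing Lemma, which is the standard way to formalize the ``contractible groupoid of choices'' idea you describe.
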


The equivalence above is one of topological spaces, not of infinite loop spaces
or spectra. While in many cases the equivalences are equivalences of infinite
loop spaces, that statement is not true in this generality (for example, smooth varieties
cannot be delooped in the way described in \cite[Sec.~5]{campbell} since it relies on
the existence of pushouts). We hope to address deloopings in future work.

In order to make the proof of Thm.~\ref{thm:S.=Q} as formally similar to the classical ``$S_\dotp = Q$'' theorem due to Waldhausen (\cite[Sect~1.9]{waldhausen}) we introduce the following definition.

\begin{definition}
  Let $(\C,\M,\E)$ be a CGW-category. We define $iS_n \C$ to be a category with
  \begin{description}
  \item[objects] Elements of $S_n \C$ 
  \item[morphisms] A collection of isomorphisms $f_{ij}: C_{ij} \to C'_{ij}$ in $\M$ such that the diagrams
    \begin{diagram}
      {C_{ik} & C_{lk} &\qquad& C_{ij} & C_{ik}\\
        C'_{ik} & C'_{lk} && C'_{ij} & C'_{ik} \\};
      \mto{1-1}{1-2} \mto{2-1}{2-2}  \mto{1-1}{2-1}_{f_{ik}} \mto{1-2}{2-2}^{f_{lk}}
      \eto{1-4}{1-5} \eto{2-4}{2-5} \eto{1-4}{2-4}_{\phi(f_{ij})} \eto{1-5}{2-5}^{\phi(f_{ik})} 
    \end{diagram}
commute in $\M$ and $\E$, respectively. 
  \end{description}
\end{definition}

\begin{remark}
We could have also used the isomorphisms in $\E$ in the above definition. The isomorphism $\phi$ guarantees the resulting definition is categorically equivalent to the one above. 
\end{remark}

\begin{proof}[Proof of Theorem~\ref{thm:S.=Q}]
  The definitions are designed to make this statement work exactly as in Waldhausen \cite[Sect~1.9]{waldhausen}. Let $i Q \mathcal{C}$ be the double category where vertical morphisms are isomorphisms in $Q \mathcal{C}$ and horizontal morphisms are morphisms in $Q \mathcal{C}$. Taking the nerve in the horizontal direction, we obtain a simplicial category $i Q_\dotp \mathcal{C}$. There is an equivalence $|Q \mathcal{C}| \xrightarrow{\simeq} |iQ_\dotp \mathcal{C}|$ given by Waldhausen's Swallowing Lemma \cite[Lem.~1.6.5]{waldhausen}.

Similarly, let $\mathbf{sd}\ i S_\dotp \mathcal{C}$ be the simplicial category we obtain from edgewise subdividing the $S_{\dotp}$-construction (for an introduction and proof of the properties of edgewise subdivision see \cite[App. 1]{segal_config}). There is now a functor
\[
\mathbf{sd}\ i S_\dotp \mathcal{C} \rto i Q_\dotp \mathcal{C}
\]
defined as in \cite[Sect.~1.9]{waldhausen}. It is a level-wise categorical equivalence, and thus induces a weak equivalence of bisimplicial sets, by the usual realization lemma (see, e.g. \cite[Lem.~5.1]{waldhausen_free}). 

Altogether we have
\[
|i S_\dotp \mathcal{C}| \lto^\cong |\mathbf{sd}\ i S_\dotp \mathcal{C}| \to^\simeq |i Q_\dotp \mathcal{C}| \inlineArrow{<-}^\simeq |Q \mathcal{C}|
\]
where the first map, a homeomorphism, is given by \cite[Prop.~A.1]{segal_config}.

Finally, we have the commutative diagram
  \begin{diagram}
    {{} |S_\dotp \mc{C}| & {} |\mbf{sd} S_\dotp \mc{C}| & {} |Q \mc{C}| \\
     {} |i S_\dotp \mc{C}| & {} |\mbf{sd} i S_\dotp \mc{C}| & {} |i Q \mc{C}| \\};
    \to{1-2}{1-1}_{\cong} \we{1-1}{2-1} \we{2-2}{2-1} \we{1-2}{2-2} \we{2-2}{2-3} \to{1-2}{1-3}  \we{1-3}{2-3}
  \end{diagram}
where we know that all of the indicated arrows are weak equivalences, and so the
remaining arrow is a weak equivalence. The composite across the top $|S_\dotp
\mc{C}| \rto |Q\mc{C}|$ is thus a weak equivalence. Upon taking loop spaces this gives the statement of the theorem. 
\end{proof}

As a corollary we can now show that D\'evissage works for SW-categories that are
the ambient categories of pre-ACGW-categories.

\begin{corollary} \label{cor:var-sch}
  Let $\A$ and $\C$ be pre-ACGW-categories satisfying the conditions of
  Theorem~\ref{thm:devissage}.  Then the map
  \[K^S(\A) \to K^S(\C)\]
  is an equivalence.  In particular, if $\A$ and $\C$ are constructed from
  SW-categories \cite{campbell} then the induced maps on $K$-theories of the
  SW-categories is also an equivalence.
\end{corollary}

We now use Waldhausen's approach to define relative $K$-theory
(Definition~\ref{defn:relative_K}) and prove a homotopy fiber sequence between
the relative $K$-theory and ordinary $K$-theories (Proposition~\ref{prop:fiber},
analogous to \cite[Prop. 1.5.5]{waldhausen}).  These will be needed in
Section~\ref{sec:comparison} to prove that the previous constructions of the
$K$-theory of varieties are equivalent.

\begin{defn}\label{S._cgw_defn}
  Let $\mc{A}$ be an ACGW-category. We define a CGW-structure on $S_n\mc{A}$. We give $S_n \mc{A}$ distinguished families of $\M$ and $\E$ morphisms as follows. 
  \begin{description}
  \item[$\M$-morphisms] A collection of maps $f_{ij}\colon C_{ij} \mrto D_{ij}$  in $\M$ such that
    \begin{diagram}
      {C_{ij} & D_{ij} & C_{ik}  & D_{ik}\\ 
        C_{ik} & D_{ik} & C_{lk} & D_{lk}  \\};
      \mto{1-1}{1-2} \mto{1-1}{2-1} \mto{1-2}{2-2} \mto{2-1}{2-2}
      \mto{1-3}{1-4} \mto{2-3}{2-4} \eto{1-3}{2-3} \eto{1-4}{2-4} 
    \end{diagram}
    are in $\Ar_\times \M$  and $\Ar_{\circlearrowleft} \mc{M}$, respectively. We visualize these as cubes
    \begin{diagram}
      {C_{ij} & & C_{ik} &\\
      & D_{ij} & & D_{ik} \\
      C_{lj} & & C_{lk} & \\
      & D_{lj} & & D_{lk} \\};
      \mto{1-1}{1-3} \eto{1-3}{3-3} \eto{1-1}{3-1} \mto{3-1}{3-3}
      \mto{1-1}{2-2} \mto{1-3}{2-4} \mto{3-1}{4-2} \mto {3-3}{4-4}
      \over{2-2}{2-4} \mto{2-2}{2-4} \over{2-2}{4-2} \eto{2-2}{4-2} \eto{2-4}{4-4} \mto{4-2}{4-4}
    \end{diagram}
  \item[$\E$-morphisms] A collection of maps $g_{ij}\colon C_{ij} \to D_{ij}$ in $\E$ such that
    \begin{diagram}
       {C_{ij} & D_{ij} & C_{ik}  & D_{ik}\\ 
         C_{ik} & D_{ik} & C_{lk} & D_{lk}  \\};
       \mto{1-1}{2-1} \mto{1-2}{2-2} \eto{1-1}{1-2} \eto{2-1}{2-2}
       \eto{1-3}{2-3} \eto{1-4}{2-4} \eto{1-3}{1-4} \eto{2-3}{2-4}
    \end{diagram}
    are in $\Ar_{\circlearrowleft} \E$ and $\Ar_{\times} \E$, respectively. We visualize these as cubes
        \begin{diagram}
      {C_{ij} & & C_{ik} &\\
      & D_{ij} & & D_{ik} \\
      C_{lj} & & C_{lk} & \\
      & D_{lj} & & D_{lk} \\};
      \mto{1-1}{1-3} \eto{1-3}{3-3} \eto{1-1}{3-1} \mto{3-1}{3-3}
      \eto{1-1}{2-2} \eto{1-3}{2-4} \eto{3-1}{4-2} \eto {3-3}{4-4}
      \over{2-2}{2-4} \mto{2-2}{2-4} \over{2-2}{4-2} \eto{2-2}{4-2} \eto{2-4}{4-4} \mto{4-2}{4-4}
    \end{diagram}
  \item[Distinguished squares] Let $C_{\cdot\cdot}, D_{\cdot\cdot}, E_{\cdot\cdot}, F_{\cdot \cdot}$ denote objects in $S_n \A$. A distinguished square consists of $\M$-morphisms $\C_{\cdot\cdot} \mrto D_{\cdot\cdot}$, $E_{\cdot\cdot} \mrto F_{\cdot\cdot}$ and $\E$-morphisms $\C_{\dot\dot} \erto E_{\dot\dot}$, $\D_{\dot\dot} \erto E_{\dot\dot}$ such that each
    \begin{diagram}
      {C_{ij} & D_{ij} \\
        E_{ij} & F_{ij}\\};
      \mto{1-1}{1-2}\mto{2-1}{2-2} \eto{1-1}{2-1} \eto{1-2}{2-2}
    \end{diagram}
    is distinguished
  \item[The functors $\phi,c, k$] The isomorphism $\phi$ is induced from the isomorphisms on $\A$. The functors $c, k$ are defined pointwise. The fact that the resulting squares are as described is guaranteed by Definition~\ref{defn:pre_acgw}, Axiom (U). 
  \end{description}
\end{defn}

We now describe the enhanced double category structure on $S_n \mc{A}$.

\begin{description}
\item[Enhanced Structure]  The enhanced double category structure on $S_n \mc{A}$, we define pseudo-commutative squares pointwise. That is, let $C_{\cdot\cdot}, D_{\cdot\cdot}, E_{\cdot\cdot}, F_{\cdot \cdot}$ denote objects in $S_n \A$. An element of $\Ar_{\circlearrowleft} S_n \mc{A}$ is given by $C_{\cdot\cdot} \mrto D_{\cdot\cdot}$ and $E_{\cdot\cdot} \erto F_{\cdot\cdot}$ and $C_{\cdot\cdot} \erto E_{\cdot\cdot}$ and $D_{\cdot\cdot} \mrto F_{\cdot\cdot}$ such that each
  \begin{diagram}
      {C_{ij} & D_{ij} \\
        E_{ij} & F_{ij}\\};
      \mto{1-1}{1-2}\mto{2-1}{2-2} \eto{1-1}{2-1} \eto{1-2}{2-2}
  \end{diagram}
    is in $\Ar_{\circlearrowleft} \M$. The 2-cells $\Ar_{\times} \M_{S_n \A}$, $\Ar_{\circlearrowleft} \E_{S_n \A}$ and $\Ar_{\times} \E_{S_n \A}$ are defined similarly. 
\end{description}

With the definitions above, the following is tedious, but straightforward. Indeed, the definitions were chosen to make this lemma true. 

\begin{lemma}\label{S._is_cgw}
  $S_n \mc{A}$, with the structure from Definition~\ref{S._cgw_defn}, satisfies
  all of the axioms of a CGW-category except for Axiom (A).  In particular, the
  $S_\dotp$-construction can be applied to $S_\dotp\mc{A}$. 
\end{lemma}

Using this we can define the relative $S_\dotp$-construction. 

\begin{definition}\label{relative_cgw} \label{defn:relative_K}
  A pair $(\mc{B}, \mc{A})$ of an ACGW-category $\mc{B}$ and a sub-ACGW-category
  $\mc{A}$ is \emph{good} if $\mc{A}$ is full and if for every isomorphism
  $B \rto^{\cong} B'$ in $\mc{B}$, $B$ is in $\mc{A}$ if and only if $B'$ is.
  For a good pair $(\mc{B}, \mc{A})$, define $S_n (\mc{B}, \mc{A})$ via the
  pullback
  \begin{diagram}
    {S_n (\mc{B}, \mc{A}) & S_{n+1} \mc{B}\\
      S_n \mc{A}  & S_n \mc{B}.\\};
    \cofib{1-1}{1-2} \to{1-2}{2-2}^{d_0} \to {1-1}{2-1} \cofib{2-1}{2-2}
  \end{diagram}
  In other words, $S_n(\mc{B},\mc{A})$ is the full subcategory of those objects $C_{\dotp\dotp}$
  in $S_{n+1}\mc{B}$ in which $C_{ij}\in \A$ for all $i > 0$.

  The category $S_n (\mc{B}, \mc{A})$ inherits the structure of a
  CGW-category. 
  The \emph{relative $K$-theory} of $(\mc{B}, \mc{A})$ is defined to be
  \[
    K(\mc{B}, \mc{A}) := \Omega | S_\dotp (\mc{B},\mc{A})|
  \]
\end{definition}

To conclude the section we prove an analog of additivity for the
$Q$-construction and use it to construct a homotopy fiber sequence relating
relative $K$-theory to the $K$-theory of the component categories.

\begin{proposition}[Additivity and Cofiber
  sequence] \label{lem:additivity} \label{prop:fiber} Let $(\mc{B}, \mc{A})$ be
  a good pair which arises from a subtractive category and a full subtractive
  subcategory. Then there exists a weak equivalence
  \begin{diagram}
    {QS_n (\mc{B}, \mc{A}) & Q\mc{B} \times QS_n \mc{A}.\\};
    \we{1-1}{1-2}
  \end{diagram}
  Moreover, the following is a homotopy fiber sequence after geometric realization:
  \begin{diagram}
    {Q\mc{B} & Q S_\dotp (\mc{B}, \mc{A}) & QS_\dotp \mc{A}.\\};
    \to{1-1}{1-2} \to {1-2}{1-3}
  \end{diagram}
\end{proposition}

\begin{proof}
  For any object $C_{\dotp,\dotp}$ in $S_n\mc{B}$, write $C_{\dotp- 1,
    -1}$ for the object in $S_{n-1}\mc{B}$ containing all elements with positive
  indices.  
  When $C_{\dotp,\dotp}\in S_n(\mc{B},\mc{A})$, $C_{\dotp-1,\dotp-1}$ can be
  considered to lie in $S_n\mc{A}$.    
  
  There are functors 
  \begin{general-diagram}{0em}{2.5em}
    { S_n(\B,\A) & \B & \qqand & S_n(\B,\A) & S_n \A \\
      C_{\dotp,\dotp} & C_{0,0} & & C_{\dotp,\dotp} & C_{\dotp-1,\dotp-1} \\};
    \to{1-1}{1-2}^{F'} \to{1-4}{1-5}^{F''}
    \goesto{2-1}{2-2} \goesto{2-4}{2-5}
  \end{general-diagram}
  which induce a map $f:QS_n(\mc{B},\mc{A}) \rto Q\mc{B}\times QS_n\A$.  This
  map is a coretraction, where the reverse map is constructed using the
  subtractive structure of $\mc{B}$.  These fit into a commutative diagram
  \begin{diagram}
    {Q \mc{B} & Q S_n (\mc{B}, \mc{A}) & Q S_n \mc{A}\\
      Q \mc{B} & Q \mc{B} \times Q S_n \mc{A} & Q S_n \mc{A}\\};
    \to{1-1}{1-2} \to{1-2}{1-3} \to{2-1}{2-2} \to{2-2}{2-3} \eq{1-1}{2-1} \to{1-2}{2-2}^f \eq {1-3}{2-3} 
  \end{diagram}
  in which the bottom row is a homotopy fiber sequence (in fact a trivial fiber
  sequence), $Q \mc{B} \rto QS_n \mc{A}$ is constant, and $QS_n \mc{A}$ is
  connected.  Thus, by \cite[Prop.~5.2]{waldhausen_free}, to prove that the
  geometric realization of
  \begin{diagram}
    {Q \mc{B} & Q S_n (\mc{B}, \mc{A}) & QS_n \mc{A}\\};
    \to{1-1}{1-2} \to {1-2}{1-3}
  \end{diagram}
  is a homotopy fiber sequence it suffices to check that $f$ is a weak
  equivalence; thus the second part of the proposition follows from the first.
  
  Consider the following commutative diagram:
  \begin{diagram}
    { Q S_n (\mc{B}, \mc{A}) & Q \mc{B} \times Q S_n \mc{A} \\
      S_\dotp S_n (\mc{B}, \mc{A}) & S_\dotp \mc{B} \times S_\dotp S_n \mc{A}\\};
    \to{1-1}{1-2}^f \we{2-1}{1-1} \we{2-2}{1-2} \to{2-1}{2-2}^{f'}
  \end{diagram}
  The vertical arrows are weak equivalences by Theorem~\ref{thm:S.=Q} and
  \cite[Lemma.~5.1]{waldhausen_free}. Thus $f$ is a weak equivalence if and only
  if $f'$ is.  By assumption, $\mc{A}$ and $\mc{B}$ arise from $SW$-categories,
  and as $S_\dotp$-constructions for ACGW-categories that arise from
  $SW$-categories agree by definition, $f'$ is a weak equivalence by
  \cite[Proposition 5.3]{campbell}.  Thus $f'$ must also be a weak equivalence,
  and the proposition follows.
\end{proof}

\begin{remark}
  In fact, the assumption that $\mc{A}$ and $\mc{B}$ arise from $SW$-categories
  can be significantly weakened; the only assumption necessary is that axioms
  (A) and (PP) hold sufficiently functorially.  In order to check this it is
  necessary to check that all steps in the proofs of \cite[Theorem 4.5,
  Proposition 5.3]{campbell} work analogously in ACGW-categories.  However, as
  this would significantly disrupt the flow of this paper (and not add
  significantly to understanding) we omit this more general result here;
  instead, we restrict solely to the case in which it is needed later in the
  paper. 
\end{remark}

\section{Localization of ACGW-categories} \label{sec:localization}

In this section we state the new definition necessary to state the localization
theorem.  The goal of a localization theorem is to identify the homotopy cofiber
of the map $K(\A) \rto K(\C)$ induced by the inclusion of a sub-CGW-category.
In order to prove the cleanest version of the theorem it is necessary to make
extra assumptions about the structure of $\A$ and $\C$, and thus passage to
ACGW-categories is necessary.  In addition, in order to ensure that objects in
$\A$ can be worked with easily, we assume some nice closure properties on $\A$
(similar to the closure properties assumed by Quillen).

Let $\C=(\E,\M)$ be an ACGW-category, and let $\A$ be a full ACGW-subcategory
closed under subobjects, quotients and extensions, as defined in
Definition~\ref{def:closure}.  The first step towards stating localization is
identifying the CGW-category whose $K$-theory we hope to be the cofiber.

The idea of the localized category is to define morphisms $A \to B$ to be
morphisms defined from a ``dense subset'' to a ``dense subset,'' where ``dense''
is defined to be subobjects/quotients whose cokernel/kernel are in the
subcategory $\A$.  This is motivated by the definition of
monomorphism/epimorphism in an abelian quotient category $\C/\A$, where (for
example), a monomorphism $A \to B$ in the quotient category is a diagram
\[A \lcofib A' \rcofib B' \rfib B\]
where the cokernel of $A' \rcofib A$ and the kernel of $B' \rfib B$ are both in
$\A$.  We can commute the monomorphism and epimorphism past one another (in an
epic-monic factorization) to instead write this as a diagram
\[A \lcofib A' \rfib B'' \rcofib B\] where the cokernel of $A' \rcofib A$ and
the kernel of $A' \rfib B''$ must be in $\A$.  Two such diagrams are equivalent
when they have a ``common refinement'' on which they are identical.  This is
exactly the definition of an m-morphism in the localized category, except that
we are allowed to ``reduce the size of $A$'' by both an m-morphism and an
e-morphism.
 
\begin{definition} \label{def:C-A}
  Let $A \mrto B$ be a morphism in $\M$.  We write $\mDrto$ if $A^c \in \A$.
  We define $\eDrto$ analogously.
  
  Let $\C\bs\A$ be the double category with
  \begin{description}
  \item[objects] the objects of $\C$,
  \item[m-morphisms] A morphism $A \mrto B$ is an equivalence class of
    diagrams in $\C$
    \[A \eDlto A' \mDlto X\eDrto B'\mrto B.\]
    If there exists a diagram in $\C$
    \begin{diagram}
      {& & X && B' \\
        & A' && \phantom{Z} && \phantom{Z} \\
        A && \phantom{Z} && \phantom{Z} && B \\
        & A'' && \phantom{Z} && \phantom{Z} \\
        & & X' && B'' \\};
      \eDto{2-2}{3-1} \mDto{1-3}{2-2} \eDto{1-3}{1-5} \diagArrow{mmor,bend left}{1-5}{3-7}
      \eDto{4-2}{3-1} \mDto{5-3}{4-2} \eDto{5-3}{5-5} \diagArrow{mmor,bend right}{5-5}{3-7}
      \eDto{3-3}{2-2} \eDto{3-3}{4-2}
      \mDto{3-5}{2-4} \mDto{3-5}{4-4} \mDto{4-4}{3-3} \mDto{2-4}{3-3}
      \eDto{3-5}{2-6} \mto{2-6}{3-7}
      \eDto{3-5}{4-6} \mto{4-6}{3-7}
      \eDto{2-4}{1-3} \comm{2-2}{2-4} \eDto{2-4}{1-5}
      \mDto{2-6}{1-5} \comm{2-4}{2-6}
      \eDto{4-4}{5-3} \comm{4-2}{4-4} \eDto{4-4}{5-5} \mDto{4-6}{5-5}
      \comm{4-4}{4-6}
      \to{2-6}{4-6}^{\cong}
    \end{diagram}
    then the two formal compositions around the outside are considered
    equivalent.  The right-most square with the isomorphism in the middle is the
    same square that determines when two morphisms in $Q\C$ are equivalent.

    Composition is defined via a similar type of diagram, commuting the
    different types of morphisms past one another.
  \item[e-morphisms] A morphism $A \erto B$ is an equivalence class of diagrams
    in $\C$
    \[A \mDlto A' \eDlto X \mDrto B' \erto B.\]
    The equivalence relation between these is defined to be the dual condition
    to the condition on m-morphisms.
  \item[distinguished squares]   The distinguished squares are generated by the
    distinguished squares in $\C$ and axiom (I).  For a more detailed
    description, see Appendix~\ref{app:C-A}.
  \end{description}

  In this section we will often be working with morphisms in $\C\bs\A$ as
  represented by diagrams in $\C$.  As these categories have the same objects
  this can get confusing.  To help with this, we denote morphisms in $\C$ by
  arrows with straight shafts, and morphisms in $\C\bs\A$ by morphisms with wavy
  shafts.  We can thus say that an m-morphism $A \msrto B$ in $\C\bs\A$ is
  represented by a diagram
  \[A \eDlto A' \mDlto X \eDrto B' \mrto B\]
  in $\C$.
  
  We define $c:\Ar_\square \M \rto \Ar_\triangle \E$ on objects by
  $c(A \msrto B) = c_\C(B' \mrto B)$, and
  $k:\Ar_\square\E \rto \Ar_\triangle \M$ by $k(A \esrto B) = k_C(B' \erto B)$.
\end{definition}

There is a functor of double categories $s: \C\rto \C\bs\A$ which takes each
object to itself and takes every morphism to itself.

\begin{remark} \label{rm:cgw} As currently defined, $\C\bs\A$ does not have the
  structure of a CGW-category, as we cannot prove that the definitions of $c$
  and $k$ give equivalences of categories.  Proving that such a structure exists
  appears to require a development of a theory of a left calculus of fractions
  for a double category.  As this is far beyond the scope of this paper, we
  state as a condition of the localization theorem that $\C\bs\A$ extends to a
  CGW-category in a fashion compatible with the CGW-structure on $\C$ and the
  functor $s:\C\rto \C\bs\A$ and show that this works for our relevant examples.
  In Proposition~\ref{prop:C-A-well-def} we show that as long as $c$ and $k$
  give equivalences of categories, $\C\bs\A$ is a well-defined CGW-category.  In
  future work we hope to simplify this condition.

  If $\C\bs\A$ is a CGW-category then by definition the functor $s$ is a
  CGW-functor.
\end{remark}

Before turning to the main theorem we revisit the example of the localization of
an abelian category in detail, as the above definition is by no means easy to
understand.

\begin{example}\label{ex:abloc}
  Let $\C$ be an abelian category and $\A$ a Serre subcategory, considered as
  ACGW-categories. Then we claim that $\C\bs\A$ is exactly the abelian category
  $\C/\A$, considered as an ACGW-category.  First, consider the monics.  A
  morphism in $\C$ is monic in $\C/\A$ exactly when it can be represented by a
  zigzag
  \[X \lto^s Z \rto^f Y\]
  where the kernel and cokernel of $s$ are in $\A$, and when the kernel of $f$
  is in $\A$.  Writing both $s$ and $f$ in an epic-monic factorization and
  switching to the notation of CGW-categories, such a monic can be represented
  by a zigzag
  \[X \mDlto X' \eDrto Z \eDlto Y' \mrto Y.\] As $\C$ is abelian, e-morphisms
  are closed under pullbacks (i.e., epimorphisms are closed under pushouts in
  $\C$), and thus this representation is equivalent to the representation
  \[X \mDlto X' \eDlto X'\times_Z Y' \eDrto Y' \mrto Y.\]
  Using Lemma~\ref{lem:opensquare} we can swap the order of the two arrows on
  the left half, to produce a representation
  \[X \eDlto X'' \mDlto X'\times_Z Y' \eDrto Y' \mrto Y,\]
  as desired.  Given that we can also reverse this construction, we see that the
  monics (and, analogously, the epics) are as represented.

  Since $\C/\A$ is abelian it immediately follows that $\C\bs\A$ must be a
  CGW-category.
\end{example}

Before we state the main theorem, we need some auxillary definitions.

\begin{definition} \label{def:IV}
  Let $V$ be an object in $\C\bs\A$. The category $\I^m_V$ has as its objects
  pairs $(N,\phi)$, where $N\in \C$ and $\phi: sN \inlineArrow{smor,->} V$ is an
  isomorphism in $\C\bs\A$. A morphism $(N,\phi) \rto (N', \phi')$ is an
  equivalence class of diagrams $g:N \eDlto^{g_e} Y \mDrto^{g_m} N'$ (where
  diagrams are allowed to differ by an isomorphic choice of $Y$) such that
  $\phi's(g) = \phi$.  Here, $s(g)$ is considered as an isomorphism in
  $\C\bs\A$.  Composition is defined using mixed pullbacks.

  The category $\I_V^e$ is defined analogously with the roles of m-morphisms and
  e-morphisms swapped.

  If $\I_V^m$ is filtered for all $V$ we say that $\A$ is \emph{m-well-represented} in
  $\C$.  Dually, if $\I_V^e$ is filtered for all $V$ we say that $\A$ is
  \emph{e-well-represented} in $\C$.
\end{definition}

We think of $\I_V^m$ as the category of representatives inside $\C$ of an
isomorphism class of objects in $\C\bs\A$.  When this category is filtered it
means that representatives of $V$ can always be chosen compatibly, at least in
the m-morphism direction.  

\begin{definition}
  Suppose that for every diagram
  \[A \mDrto B \eDrto C\]
  in $\C$ there exists a pseudo-commutative square
  \begin{diagram}
    {A' & B \\ A' & C \\};
    \eq{1-1}{2-1} \eDto{1-2}{2-2}
    \mDto{1-1}{1-2} \mDto{2-1}{2-2} \comm{1-1}{2-2}
  \end{diagram}
  such that $A' \mDrto B$ factors through $A \mDrto B$.  Then we say that $\A$
  is \emph{m-negligible} in $\C$.  If the same statement holds with the
  m-morphisms and e-morphisms swapped, we say that $\A$ is \emph{e-negligible}
  in $\C$.
\end{definition}

Negligibility is a ``dual'' notion to well-representability.  Whereas
well-representability states that representatives can always be compatibly
combined, negligibility says that certain representatives can be ignored.  If
$\A$ is m-negligible in $\C$ this means that we never have to think about
e-components of morphisms inside $Q\C$; all such morphisms can be represented
(up to pseudo-commutative square) purely as an m-morphism.

We are now ready to state the CGW version of localization.

\begin{theorem} \label{thm:qLoc}
  Suppose that $\C$ is an ACGW-category and $\A$ is a sub-ACGW-category
  satisfying the following conditions:
  \begin{itemize}
  \item[(W)] $\A$ is m-well-represented or m-negligible in $\C$ and $\A$ is
    e-well-represented or e-negligible in $\C$.
  \item[(CGW)] $\C\bs\A$ is a CGW-category.
  \item[(E)] For two diagrams $A \eDlto X \mDrto B$ and $A \eDlto X' \mDrto B$
    which represent the same morphism in $\C\bs\A$ there exists an e-morphism
    $C \eDrto B$ and an isomorphism $\alpha: X \oslash_B C \rto X' \oslash_B C$
    such that the induced diagram
    \begin{diagram}
      { A & X \oslash_B C \\
        X' \oslash_B C & C \\};
      \mDto{1-2}{2-2} \mDto{2-1}{2-2}
      \eDto{1-2}{1-1} \eDto{2-1}{1-1}
      \to{1-2}{2-1}^\alpha
    \end{diagram}
    commutes.  The same statement holds with e-morphisms and m-morphisms
    swapped.
  \end{itemize}
  Then the sequence
  \[K(\A) \rto K(\C) \rto K(\C\bs\A)\]
  is a homotopy fiber sequence.
\end{theorem}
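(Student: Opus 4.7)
The plan is to mimic Quillen's proof of the localization theorem for abelian categories \cite[Theorem 5]{quillen}, substituting the ACGW machinery developed above for Quillen's use of abelian category properties. Concretely, I would apply Quillen's Theorem B to the map $BQs\colon BQ\C \to BQ(\C\bs\A)$ induced by the CGW-functor $s\colon \C \to \C\bs\A$, identify the homotopy fiber over the basepoint $\initial$ with $BQ\A$, and then loop to get the stated fiber sequence.

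\textbf{Step 1 (the fiber over $\initial$).} I would first analyze the comma category $s/\initial$ whose objects are pairs $(C,\psi)$ with $\psi\colon sC \to \initial$ a morphism in $Q(\C\bs\A)$. Because $\initial$ is initial in both $\M$ and $\E$ (axiom (Z)) and all morphisms in $\M,\E$ are monic (axiom (M)), the zigzag presentation of such a morphism collapses so that $\psi$ amounts to exhibiting $sC$ as ``built from $\A$-data.'' Using closure of $\A$ under subobjects, quotients, and extensions (Definition~\ref{def:closure}) — and, if necessary, Theorem~\ref{thm:devissage} applied to a filtration generated by the datum $\psi$ — one identifies $B(s/\initial) \simeq BQ\A$.

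\textbf{Step 2 (Theorem B hypothesis).} The main task is to show that for every morphism $V \to V'$ in $Q(\C\bs\A)$ the induced functor $s/V \to s/V'$ is a homotopy equivalence. Conditions (W) and (E) are tailored exactly for this. M- and e-well-representability make the relevant categories of $\C$-level witnesses for morphisms in $\C\bs\A$ filtered, so representatives can be systematically combined; negligibility collapses zigzag components in one direction when it holds; and (E) supplies the canonical isomorphism needed to build a candidate inverse $s/V' \to s/V$ via the mixed pullback of axiom (U) along a witness of the given morphism. Natural transformations from the two round-trip composites to the identities can then be produced by commuting $\M$- and $\E$-morphisms past one another using Lemma~\ref{lem:axiomC} and patching with axioms (S) and (PP). Step 3 is then immediate: Theorem B produces the sequence $BQ\A \to BQ\C \to BQ(\C\bs\A)$, and looping yields the stated fiber sequence on $K$-theory.

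The main obstacle, by a wide margin, is Step 2. In Quillen's argument one exploits strict uniqueness of pullbacks in an abelian category, together with the snake lemma, to control the transition functors; neither is directly available here and both must be rebuilt from the ACGW axioms. Condition (E) is the structural replacement for the uniqueness of pullbacks, and I would expect the verification to proceed case by case on the shape of the zigzag presenting $V \to V'$, reducing the general case to the ``elementary'' cases of an m-morphism and an e-morphism handled separately. A subsidiary annoyance is that the two alternative clauses in (W) (well-representability \emph{or} negligibility) need to be argued in parallel, since different examples of interest satisfy different ones.
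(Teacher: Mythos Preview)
Your high-level plan---apply Quillen's Theorem~B to $Qs$ and identify the fiber with $Q\A$---is exactly the paper's strategy, and you correctly locate the real difficulty in Step~2 and correctly anticipate the case split coming from the two clauses of~(W). A few points of divergence are worth noting.

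First, a minor variance issue: the paper works with the \emph{under}-category $Qs_{\initial/}$ (objects $(M,u\colon\initial\to sM)$), not the over-category $s/\initial$ you describe. Both versions of Theorem~B are available, but your Step~1 sketch (``the zigzag collapses because $\initial$ is initial'') and the suggested appeal to d\'evissage do not match the paper's argument. The paper instead proves, uniformly for every $V$, that the full subcategory $\F_V\subseteq Qs_{V/}$ of pairs $(M,u)$ with $u$ an \emph{isomorphism} is a deformation retract (by showing each $\iota_V/(M,u)$ is cofiltered); when $V=\initial$ this gives $\F_\initial\cong Q\A$ directly, with no d\'evissage.

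The more substantive divergence is in Step~2. Your plan is to build a direct homotopy inverse to $u^*$ by choosing a $\C$-level witness for $u$ and pulling back along it via the mixed pullback $\oslash$. The obstruction is that such witnesses are not canonical, so the candidate ``inverse functor'' is not obviously well-defined; condition~(E) controls pairs of witnesses but does not by itself assemble them into a functor. The paper's solution is to introduce, for each $\C$-representative $(N,\phi)$ of $V$, an auxiliary category $\sH_N$ together with functors $k_N\colon\sH_N\to Q\A$ and $P_{(N,\phi)}\colon\sH_N\to\F_V$ fitting into a square with $u^*$. One then shows separately that $k_N$ is an equivalence (via a fibered-category argument over a category $\J_N$ with terminal object, using axiom~(PP)) and that $P_{(N,\phi)}$ is an equivalence. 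The latter splits: in the m-negligible case it is a direct Theorem~A argument, while in the m-well-represented case one uses filteredness of $\I^m_V$ to exhibit $\F_V$ as the filtered colimit $\mathrm{colim}_{(N,\phi)}\sH_N$ (here is where condition~(E) enters, to prove injectivity on objects). So rather than constructing an inverse to $u^*$, the paper shows that both its source and target are equivalent to $Q\A$ through the intermediary $\sH_N$; this indirection is what absorbs the non-canonicity of representatives that your direct approach would have to confront head-on.
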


We postpone the proof of Theorem~\ref{thm:qLoc} until
Section~\ref{sec:proofloc}.  As mentioned in Remark~\ref{rm:cgw}, in order for
condition (CGW) to hold it suffices to check that $c$ and $k$ (as defined on
objects) extend to equivalences of categories.  In this section we focus on two
applications of the theorem.

The first application is a sanity check, showing that in the case of an abelian
category the theorem is the same as Quillen's localization \cite[Theorem
5.5]{quillen}.

\begin{example}
  Continuing Example~\ref{ex:abloc}, we show that Theorem~\ref{thm:qLoc} applies
  in this example; thus Theorem~\ref{thm:qLoc} is truly a generalization of
  Quillen's localization theorem.
  
  Consider condition (W); we will show that $\A$ is both m- and
  e-well-represented in $\C$.  By symmetry it suffices to check that $\I^m_V$ is
  filtered.  An object $(N,\phi) \in \I^m_V$ is an object $N\in \C$ together
  with a mod-$\A$-isomorphism $N \rto V$; a morphism $(N,\phi) \rto (N', \phi')$
  is a morphism $g:N \rto N'$ in $\C$ such that $\phi's(g) = \phi$.  Suppose
  that we are given two morphisms $g,g': (N,\phi) \rto (N',\phi')$.  Then the
  morphism $N' \rto N'/\mathrm{im}(g-g')$ is a mod-$\A$-isomorphism which
  equalizes $g$ and $g'$; thus $\I^m_V$ has coequalizers.  Now suppose that we
  are given two objects $(N,\phi)$ and $(N',\phi')$ in $\I^m_V$.  Choosing
  representatives appropriately, these give a diagram in $\C$
  \begin{diagram}
    {\widetilde N && V' && \widetilde{N'} \\
      N & N\oplus_{\tilde N} V'& V& N' \oplus_{\tilde{N'}}V' & N' \\
      & & (N \oplus_{\tilde N} V')\oplus_{V'} (N' \oplus_{\tilde N'} V') \\};
    \to{1-1}{1-3} \to{1-5}{1-3}
    \diagArrow{Dmor,right hook->}{1-1}{2-1}
    \diagArrow{Dmor,right hook->}{1-5}{2-5}
    \diagArrow{Dmor,->>}{2-3}{1-3}
    \diagArrow{Dmor,right hook->}{1-3}{2-2}
    \diagArrow{Dmor,right hook->}{1-3}{2-4}
    \to{2-1}{2-2} \to{2-5}{2-4}
    \diagArrow{Dmor,right hook->}{2-2}{3-3}
    \diagArrow{Dmor,right hook->}{2-4}{3-3}
    \diagArrow{densely dashed,->}{3-3}{2-3}^\psi
  \end{diagram}
  where the bulleted arrows represented mod-$\A$-isomorphisms.  The object
  $((N\oplus_{\tilde N} V')\oplus_{V'} (N'\oplus_{\tilde N'} V'), \psi)$ then
  represents an object under both $(N,\phi)$ and $(N', \phi')$.  Thus $\I_V^m$
  is filtered, as desired.

  It remains to check (E).  This is simply the fact that for any two morphisms
  $A \rto B$ in $\C$ which map to the same isomorphism in $\C/\A$ there is a
  quotient of $B$ (by an object in $\A$) on which they are equal---in other
  words, this is the observation that if $g$ and $g'$ represent the same
  isomorphism in $\C/\A$ then $\im (g-g')$ is in $\A$. 
\end{example}

The second example is the case of reduced schemes of finite type of bounded
dimension; we will be using this example in Section~\ref{sec:comparison} to
compare different models of the $K$-theory of varieties.

\begin{example}
  Let $\Sch_{rf}^d$ be the category of reduced schemes of finite type over $k$
  which are at most $d$-dimensional.  As mentioned in Example~\ref{ex:sch},
  $\Sch_{rf}$ is an ACGW-category; since morphisms can only increase the
  dimension of a scheme it follows directly that $\Sch_{rf}^d$ is also an
  ACGW-category.  

  We claim that Theorem~\ref{thm:qLoc} applies for $\Sch_{rf}^{d-1}
  \subseteq \Sch_{rf}^d$. We check the conditions in turn.

  First, consider condition (W).  We claim that $\Sch_{rf}^{d-1}$ is
  m-well-represented and e-negligible in $\Sch_{rd}^d$.  Here, an isomorphism in
  $\Sch_{rf}^d\bs\Sch_{rf}^{d-1}$ is (the germ of) an isomorphism between open subsets whose
  complements are at most $d-1$-dimensional.  Thus when considering an
  isomorphism we can discard all irreducible components of dimension less than
  $d$.  In addition, we can assume that all $d$-dimensional components are
  smooth and consider isomorphisms to be birational isomorphisms.  To check that
  $\Sch_{rf}^d$ is m-well-represented it suffices to check that for any two
  representatives of a birational isomorphism there exists a common dense open
  subset on which they are defined.  This is clearly true.

  To check that $\Sch_{rf}^{d-1}$ is e-negligible in $\Sch_{rf}^d$ we note that
  for any diagram $A \eDrto B \mDrto C$ if we take the nonsingular locus of the
  $d$-dimensional irreducible components of $C$ and intersect it with the image
  of $A$ we get exactly the desired subset, as all that the inclusion $B \mDrto
  C$ can add is either (a) disjoint components of dimension less than $d$ or (b)
  components of dimension less than $d$ that intersect $d$-dimensional
  components.  In case (b) the intersections are singular in $C$, so when we
  remove them we produce exactly the desired morphism.

  We now check condition (CGW).  Proposition~\ref{prop:C-A-well-def} states that
  for $\C\bs\A$ to be a CGW-category we are only required to show that $c$ and
  $k$ are well-defined equivalences of categories; the other axioms follow
  directly from the definitions.  In $\Sch^d_{rf}\bs\Sch^{d-1}_{rf}$ all objects
  are canonically isomorphic to the disjoint union of their $d$-dimensional
  connected components, so it suffices to consider these examples.  By
  definition, both the e-morphisms and m-morphisms in
  $\Sch^d_{rf} \bs \Sch^{d-1}_{rf}$ are birational isomorphisms of the domain
  with a subset of the components of the codomain.  Both $c$ and $k$ simply take
  the components not hit by the morphism.  Consider taking each object to its
  set of connected components; from the definition of the distinguished squares
  (see Appendix~\ref{app:C-A}) a square in $\Sch^d_{rf}\bs \Sch^{d-1}_{rf}$ is
  distinguished if and only if the produced square in the category of finite
  sets is distinguished.  The fact that $c$ and $k$ are equivalences of
  categories thus follows from the fact that they are induced from $c$ and $k$
  on the category $\FinSet$.

  It remains to check condition (E).  Since $\oslash$ in $\Sch_{rf}^d$ is simply
  intersection of schemes the condition as stated follows by the same argument
  as the negligibility condition above.  To check the condition with m-morphisms
  and e-morphisms reversed, let $A_d$ be the $d$-dimensional irreducible
  components of $A$.  Then $A_d \mDrto X \times_A X'$ exists, and the maps
  $A_d \mDrto X \eDrto B$ and $A_d \mDrto X' \eDrto B$ are equal inside the
  (ordinary) category of schemes (since they must be equal on a dense open
  subset, as they are equivalent in $\Sch_{rf}^d\bs\Sch_{rf}^{d-1}$).  Factoring
  this morphism as $A_d \eDrto C \mDrto B$ gives the desired object $C$.

  We now observe that, by the equivariant Barratt--Priddy--Quillen theorem,
  \[K(\Sch_{rf}^d \bs \Sch_{rf}^{d-1}) \simeq \bigoplus_{\alpha \in B_n}
    \Omega^\infty \Sigma^\infty B \Aut(\alpha).\] Here, $B_n$ is the set of
  birational isomorphism classes of schemes of dimension $d$, and $\Aut(\alpha)$
  is the group of birational automorphisms of a representative of the class.
\end{example}

\section{A comparison of models} \label{sec:comparison}

In this section we compare both authors' models for $K(\Var_{/k})$. Write
$K^C(\Var_{/k})$ for the $K$-theory of varieties defined as in \cite{campbell},
and let $K^Z (\Var_{/k})$ denote the model in \cite{zakharevich_assembler}. We
then have the following comparison theorem.

\begin{theorem} \label{thm:equiv}
  $K^C (\Var_{/k})$ is weakly equivalent to $K^Z(\Var_{/k})$.
\end{theorem}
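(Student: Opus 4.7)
The plan is to compare both models dimension by dimension, taking the middle term $\bullet$ in the zig-zag to be $K(\Sch_{rf}^n)$, the $Q$-construction $K$-theory of the ACGW-category of reduced schemes of finite type of dimension at most $n$. I will exhibit comparison maps into this common target from each side and prove both are weak equivalences by inducting on $n$, using the localization fiber sequences produced by Theorem~\ref{thm:qLoc}.

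For the arrow from $K^C(\Var^n)$, I would first apply Theorem~\ref{thm:S.=Q} together with Remark~\ref{agreement}, which identifies the $\widetilde{S}_\bullet$-construction of \cite{campbell} with $S_\bullet$ for the SW-category structure on $\Var$; this gives $K^C(\Var^n) \simeq K(\Var^n)$. Then d\'evissage (Theorem~\ref{thm:devissage}), in the bounded-dimension variant of Example~\ref{ex:var->sch}, yields $K(\Var^n) \simeq K(\Sch_{rf}^n)$, since every reduced scheme of finite type of dimension at most $n$ admits a finite filtration by affine opens whose successive complements are varieties of dimension at most $n$. For the arrow from $K^Z(\V^n)$, I would use the fact that the generating cofibrations of the assembler $\V^n$ are open immersions of varieties and that its scissors-congruence covering relations are precisely instances of distinguished squares in the ACGW-structure on $\Sch_{rf}^n$; this translates assembler data directly into CGW data and yields a map of spectra $K^Z(\V^n) \to K(\Sch_{rf}^n)$.

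To show both maps are weak equivalences, I would argue by induction on $n$. The base case $n=0$ reduces on both sides, via Barratt--Priddy--Quillen, to a wedge of sphere spectra indexed by $k$-points. For the inductive step, apply Theorem~\ref{thm:qLoc} to the inclusion $\Sch_{rf}^{n-1} \subset \Sch_{rf}^n$, whose hypotheses (W), (CGW) and (E) are checked in the last Example of Section~\ref{sec:localization}, to obtain the localization fiber sequence
\[K(\Sch_{rf}^{n-1}) \to K(\Sch_{rf}^n) \to K(\Sch_{rf}^n \bs \Sch_{rf}^{n-1}),\]
whose cofiber is $\bigoplus_{\alpha \in B_n} \Omega^\infty \Sigma^\infty B\Aut(\alpha)$, indexed by birational isomorphism classes of $n$-dimensional varieties. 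The assembler $\V^n$ carries a parallel filtration by dimension whose cofiber sequences in \cite{zakharevich_assembler} produce the same wedge. The comparison maps then fit into a ladder of fiber sequences whose outer terms agree by induction and whose rightmost terms agree by the explicit Barratt--Priddy--Quillen description on each side, so the five lemma finishes the inductive step.

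The main obstacle will be to construct the map $K^Z(\V^n) \to K(\Sch_{rf}^n)$ with enough precision to verify that it respects the dimension filtrations on both sides in a way that produces \emph{canonically compatible} identifications of the top-dimensional cofibers. While each cofiber is independently identified with $\bigoplus_{\alpha \in B_n} \Omega^\infty\Sigma^\infty B\Aut(\alpha)$, showing that these identifications agree under the comparison map requires carefully tracing the assembler's top-dimensional stratum, parametrized by birational automorphism classes, onto the localized CGW-category $\Sch_{rf}^n \bs \Sch_{rf}^{n-1}$ in the way prescribed by the example at the end of Section~\ref{sec:localization}. Once this compatibility is established the zig-zag of weak equivalences follows formally.
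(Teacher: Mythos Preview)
Your outline diverges from the paper's in a crucial way, and the divergence is exactly where the real difficulty lies.  The paper does \emph{not} take $K(\Sch_{rf}^n)$ as the middle term of the zig-zag.  Instead it introduces an auxiliary SW-category $\Var_w$ (Definition~\ref{def:Varw}) whose weak equivalences are the ``piecewise isomorphisms'': maps $f:X\to Y$ admitting a stratification of $Y$ over whose strata $f$ restricts to isomorphisms.  The zig-zag is $K^C(\Var)\to K^C(\Var_w)\leftarrow K^Z(\V)$, and both arrows are handled separately: the left one by an induction that eventually invokes d\'evissage and localization on $\Sch_{rf}$ (Proposition~\ref{prop:lambda}), and the right one by a direct levelwise adjunction (Proposition~\ref{prop:ZtoC}).

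The gap in your proposal is the construction of the map $K^Z(\V^n)\to K(\Sch_{rf}^n)$.  You say that assembler covering relations ``are precisely instances of distinguished squares'' and that this ``yields a map of spectra,'' but this does not produce a simplicial functor.  Concretely, the assembler $K$-theory is built from the simplicial category $\W(\V^{\vee\bullet})$, and a morphism in $\W(\V)$ from $\{A_i\}$ to $\{B_j\}$ is a covering family of open immersions $A_i\hookrightarrow B_{f(i)}$.  On the CGW side, the category $iS_m\Sch_{rf}$ (or equivalently $Q$) has only \emph{isomorphisms} as its vertical morphisms.  The disjoint union $\coprod A_i$ is not isomorphic in $\Sch_{rf}$ to $\coprod B_j$; it is only piecewise isomorphic.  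So the functor you need does not land in $iS_\bullet\Sch_{rf}$.  The paper's $\Var_w$ is engineered precisely so that these covering maps become weak equivalences, which is what allows the disjoint-union functor $F_\bullet:\W(\V^{\vee\bullet})\to wS_\bullet\Var_w$ to exist and, moreover, to admit a levelwise right adjoint (taking a filtered object to the tuple of its successive complements).  That adjunction is what proves the right-hand equivalence with no induction at all.  Without an analogue of $\Var_w$, your proposed map does not exist as stated, and the ``main obstacle'' you flag is not a matter of bookkeeping compatibilities but of building the map in the first place.
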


The rest of this section focuses on the proof of the theorem.  For conciseness
we fix the base field $k$ and omit it from the notation.  To prove the theorem
we construct an auxilliary SW-category $\Sch_{rfw}$ and show that there are weak
equivalences
\[K^C(\Var) \rwe K^C(\Sch_{rf}) \rwe K^C(\Sch_{rfw}) \lwe K^Z(\Var).\] Recall
that $\Sch_{rf}$ is the ACGW-category of reduced schemes of finite type
(Example~\ref{ex:sch}). By an abuse of notation, we also write $\Sch_{rf}$ for
the SW-category of reduced schemes of finite type (see Remark~\ref{agreement}).
The left-hand map is an equivalence by Corollary~\ref{cor:var-sch}, so we focus
on the zig-zag on the right.

\begin{remark}
  Constructing the weak equivalence on the right (and checking that it is, in
  fact, a weak equivalence) is a relatively straightforward exercise in
  simplicial objects (see Proposition~\ref{prop:ZtoC}), and has been known to
  the authors for several years.  The most difficult part of this proof is
  actually checking that the middle map (which is induced by an inclusion of
  SW-categories) is a weak equivalence on $K$-theory.  In Waldhausen categories,
  this is analogous to the following question: suppose that $\C$ is a
  Waldhausen category in which the weak equivalences do not satisfy the
  Extension axiom \cite[p. 327]{waldhausen}.  Let $\C'$ be the Waldhausen category
  with the same underlying category and cofibrations as $\C$ together with the
  minimal set of weak equivalences that includes all weak equivalences in $\C$
  and satisfies Extension.  Does the natural functor $\C \to \C'$ induce a weak
  equivalence on $K$-theory?  The authors could not find an answer to this
  question, but the current example on schemes produces an interesting example
  where the answer is ``yes.''  
\end{remark}

\begin{definition} \label{def:Varw} We define a new SW-category $\Sch_{rfw}$.
  Its underlying category is $\Sch_{rf}$, the category of reduced schemes of
  finite type.  We define the structure maps by setting
  \begin{description}
  \item[cofibrations] the open immersions, and
  \item[complement maps] the closed immersions, and
  \item[weak equivalences] those morphisms $f:X \rto Y$ such that
  there exists a stratification
  \[\emptyset = Y_0 \rcofib^{cl} Y_1 \rcofib^{cl} \cdots \rcofib^{cl} Y_n = Y\]
  of $Y$ by closed immersions such that for all $i$, the induced map
  $f_i:X\times_Y (Y_i \setminus Y_{i-1}) \rto Y_i\setminus Y_{i-1}$ is an
  isomorphism.  
  \end{description}
\end{definition}

\begin{remark}
This is equivalent to the statement that there is a corresponding filtration $X_i$ on $X$ such that $f_i \colon X_i \setminus X_{i-1} \rto Y_i \setminus Y_{i-1}$ is an isomorphism. We sometimes use the condition in this form. 
\end{remark}

We state the relevant definitions on the assembler-side of the equivalence.

\begin{definition}
  The assembler $\Var$ (resp. $\Sch_{rf}$) has as objects the varieties
  (resp. reduced schemes of finite type), with morphisms the locally closed
  immersions.  The topology on $\Var$ (resp. $\Sch_{rf}$) is generated by the
  coverage consisting of pairs $\{Y \rcofib X, X \smallsetminus Y \rcofib X\}$,
  where $Y \rcofib X$ is a closed immersion.

  The inclusion of assemblers $\Var \rto \Sch_{rf}$ incudes an equivalence of
  $K$-theories by \cite[Theorem B]{zakharevich_assembler}, as every reduced scheme of
  finite type has a finite disjoint cover by varieties.
\end{definition}

As the proof of Theorem~\ref{thm:equiv} has many parts, we begin by presenting
the basic outline.  This will reduce the proof to showing that certain morphisms
are equivalences on $K$-theory, and the rest of the section will focus on each
of those maps in turn.

\begin{proof}[Outline of proof for Theorem~\ref{thm:equiv}]
  The category of reduced schemes of finite type comes equipped with a
  filtration by dimension.  This filtration is inherited by $\Sch_{rf}$ and $\Sch_{rfw}$,
  and the inclusion $\Sch_{rf} \rto \Sch_{rfw}$ is compatible with this filtration.  Note that 
  \[K^C(\Sch_{rf}) = \hocolim_n K^C(\Sch_{rf}^n),\] and similarly for $K^Z(\Sch_{rf})$ and
  $K^C(\Sch_{rfw})$.  Thus to show the theorem it suffices to show that there exist
  equivalences $K^C(\Sch_{rf}^n) \rto K^C(\Sch^n_{rfw})$ and $K^Z(\Sch^n) \rto
  K^C(\Sch^n_{rfw})$ for all $n$ which are compatible with the inclusions on the
  filtrations.

  Proposition~\ref{prop:ZtoC} constructs a map
  $K^Z(\Sch_{rf}^n) \rto K^C(\Sch_{rfw}^n)$ which is an equivalence for all $n$.  The
  map $K^C(\Sch_{rf}^n) \rto K^C(\Sch^n_{rfw})$ is induced by the identity map
  on the underlying categories (as both $\Sch_{rf}$ and $\Sch_{rfw}$ have the
  same underlying SW-category; they differ only in their choice of weak
  equivalences).

  Our proof proceeds by induction on $n$.  When $n = 0$, $\Sch_{rf}^0 = \Sch^0_{rfw}$, so
  the $K$-theories of these are equal.  We now assume that the natural inclusion
  $K^C(\Sch_{rf}^{n-1}) \rto K^C(\Sch_{rfw}^{n-1})$ is an equivalence.  Consider the following
  diagram: 
  \begin{equation} \label{eq:equivDiag1}
    \begin{tikzcd}
      K^{C} (\Sch_{rf}^{n-1}) \ar{d}{i} \ar{r}{\sim} & K^C(\Sch^{n-1}_{rfw}) \ar{d}{i'} & K^Z (\Sch_{rf}^{n-1}) \ar{l}[swap]{\sim}\ar{d}{i}\\
      K^{C} (\Sch_{rf}^n) \ar{r}{g} \ar{d} & K^C(\Sch^{n}_{rfw})\ar{d} & K^Z (\Sch_{rf}^n)\ar{l}[swap]{\sim}\ar{d} \\
      K^C(\Sch_{rf}^n,\Sch_{rf}^{n-1}) \ar{r}{g'} & K^C(\Sch^n_{rfw},\Sch^{n-1}_{rfw}) & K^Z((\Sch_{rf}^n/i)_\dotp) \ar{l}[swap]{f} 
    \end{tikzcd}
  \end{equation}
  The columns in this diagram are homotopy fiber sequences. The column on the
  right is produced by \cite[Theorem C]{zakharevich_assembler}, the other two
  columns are produced by \cite[Prop.~5.5]{campbell}. The maps between the
  columns are given below. Since the columns are homotopy fiber sequences of
  loop spaces, $f$ must be a weak equivalence by the five lemma.  The map $g$ is
  a weak equivalence if and only if $g'$ is, so we focus on proving that $g'$ is
  a weak equivalence.

  In Definitions~\ref{def:D}, \ref{def:beta}, \ref{def:rho}, and
  \ref{def:lambda} we show that there exists a category $\D$ and morphisms
  \begin{align}
    &\lambda: K^C(\Sch_{rf}^n,\Sch_{rf}^{n-1}) \rto K^C(\D), \label{eq:lambda}\\
    &\beta: K^C(\Sch^n_{rfw},\Sch^{n-1}_{rfw}) \rto K^C(\D), \hbox{ and}\label{eq:beta} \\
    &\rho: K^Z((\Sch_{rf}^n/i)_\dotp) \rto K^C(\D) \label{eq:rho}
  \end{align}
  making the following diagram commute:
  \begin{diagram}
    { K^C(\Sch_{rf}^n,\Sch_{rf}^{n-1})  & K^C(\Sch^n_{rfw},\Sch^{n-1}_{rfw}) & K^Z((\Sch_{rf}^n/i)_\dotp)  \\
      & K^C(\D) \\}; \to{1-1}{1-2}^{g'} \we{1-3}{1-2}_f \to{1-3}{2-2}^\rho
    \to{1-2}{2-2}_\beta \to{1-1}{2-2}_\lambda
  \end{diagram}
  Here, the top row is the bottom row of (\ref{eq:equivDiag1}).  The map $\beta$
  is a weak equivalence by Proposition~\ref{prop:beta}.  Thus we see that $g'$
  is an equivalence if and only if $\lambda$ is; that $\lambda$ is an
  equivalence is exactly the conclusion of Proposition~\ref{prop:lambda}.  Thus
  $g'$ is an equivalence, and the inductive step is complete.
\end{proof} 

We now turn our attention to filling in the details of the proof above.  We
begin by checking that $\Sch_{rfw}$ is well-defined. 

\begin{lemma}
  Let $X,Y,Z\in \Sch_{rfw}$ and suppose $X \rto Y$ and $Y \rto Z$ are weak
  equivalences. Then $X \rto Z$ is a weak equivalence.
\end{lemma}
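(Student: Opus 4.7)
The plan is to construct a stratification of $Z$ witnessing that $g \circ f \colon X \to Z$ is a weak equivalence, by refining the stratification given by $g$ using the one given by $f$.

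Fix stratifications $\emptyset = Z_0 \mrto Z_1 \mrto \cdots \mrto Z_m = Z$ witnessing that $g \colon Y \to Z$ is a weak equivalence and $\emptyset = Y_0 \mrto Y_1 \mrto \cdots \mrto Y_n = Y$ witnessing that $f \colon X \to Y$ is a weak equivalence. Write $W_j = Z_j \setminus Z_{j-1}$, so $g$ restricts to an isomorphism $g_j \colon g^{-1}(W_j) \to W_j$. Pulling back the $Y_\bullet$ stratification, set
\[ A_{j,i} = g_j(Y_i \cap g^{-1}(W_j)) \subseteq W_j, \]
a closed subscheme of $W_j$ (since $Y_i \cap g^{-1}(W_j)$ is closed in $g^{-1}(W_j)$ and $g_j$ is an isomorphism). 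Now define
\[ Z_{j,i} \;=\; Z_{j-1} \,\cup\, \overline{A_{j,i}}^{Z_j}, \]
where the overline denotes Zariski closure inside $Z_j$. Each $Z_{j,i}$ is closed in $Z_j$, hence in $Z$, and by construction $Z_{j,0} = Z_{j-1}$ and $Z_{j,n} = Z_j$. These assemble into a single filtration
\[ \emptyset = Z_{1,0} \mrto Z_{1,1} \mrto \cdots \mrto Z_{1,n} = Z_1 = Z_{2,0} \mrto \cdots \mrto Z_{m,n} = Z. \]

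The first key step is to verify that $Z_{j,i} \setminus Z_{j,i-1} = A_{j,i} \setminus A_{j,i-1}$. For this one uses that $W_j$ is open in $Z_j$, so closure in $Z_j$ intersected with $W_j$ is closure in $W_j$; since $A_{j,i}$ is already closed in $W_j$, one obtains $\overline{A_{j,i}}^{Z_j} \cap W_j = A_{j,i}$, and a small set-theoretic manipulation gives the claim.

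The second key step is then purely formal: under the isomorphism $g_j$, the stratum $A_{j,i} \setminus A_{j,i-1}$ corresponds to $(Y_i \setminus Y_{i-1}) \cap g^{-1}(W_j)$, which is an open subscheme of $Y_i \setminus Y_{i-1}$. Since $X \times_Y (Y_i \setminus Y_{i-1}) \to Y_i \setminus Y_{i-1}$ is an isomorphism by hypothesis, its restriction over this open is too; composing with $g_j$ yields that $X \times_Z (Z_{j,i} \setminus Z_{j,i-1}) \to Z_{j,i} \setminus Z_{j,i-1}$ is an isomorphism, as required.

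The main obstacle is bookkeeping: one must check that the $Z_{j,i}$ are genuinely closed subschemes (not merely closed subsets) of $Z$ and that the stratum formula above holds scheme-theoretically. This is where working in $\Sch_{rf}$ matters, since all objects are reduced, so the scheme structure on each $Z_{j,i}$ is determined by its underlying closed set, and the identifications above can be done topologically. The rest of the argument is a direct diagram chase using the base-change compatibility of the pullback $X \times_Z -$.
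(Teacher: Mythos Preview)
Your proof is correct and follows essentially the same strategy as the paper: refine the $Z$-stratification by transporting the $Y$-stratification through the isomorphisms $g_j$ on each locally closed stratum, then check that the resulting strata pull back isomorphically to $X$. The only cosmetic difference is that the paper defines the refined closed pieces directly as complements $Z_j \setminus g_j(\text{open})$, which makes closedness immediate and shows your closure step is in fact redundant (the set $Z_{j-1}\cup A_{j,i}$ is already closed in $Z_j$).
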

\begin{proof}
  Recall that $X \rto Y$ being a weak equivalence is the statement that there is a stratification
  \begin{diagram}
    {\emptyset = Y_0 & Y_1 & \cdots & Y_n \\}; \mto{1-1}{1-2} \mto{1-2}{1-3}\mto{1-3}{1-4}
  \end{diagram}
  such that $X \times_{Y}(Y_i \setminus Y_{i-1}) \xrightarrow{\cong} Y_i \setminus Y_{i-1}$. Similarly for $Y \rto Z$. We must produce a new stratification of $Z$, call it $Z'_i$, such that $X \times_Z (Z'_i \setminus Z'_{i-1}) \xrightarrow{\cong} (Z'_i \setminus Z'_{i-1})$. We do this by stratifying each $(Z_i\setminus Z_{i-1})$ in turn, using the stratification of $Y$, and gluing these together. 

  The problem thus reduces to the following. Given $Y_1 \mrto Y_2$ and $Z_1 \mrto Z_2$ with an isomorphism $\varphi: Y_2 \setminus Y_1 \mrto Z_2 \setminus Z_1$, and a further stratification $Y_{1,0} \mrto \cdots \mrto Y_{1,n} = Y_2$, produce a corresponding stratification for $Z_1 \mrto Z_2$.  To do this, define $Z_{1, i} = Z_2 \setminus \varphi(Y_2 \setminus Y_{1,i})$. One checks that
  \[
  Z_{1,i}\setminus Z_{1,i-1} = (Z_2 \setminus \varphi (Y_2 \setminus Y_{1,i})) \setminus (Z_2 \setminus (Y_{1,i-1})) = \varphi(Y_2 \setminus Y_{1,i-1}) \setminus \varphi(Y_2 \setminus Y_{1,i}) \cong \varphi(Y_i \setminus Y_{i-1}) 
  \]
\end{proof}

\begin{lemma}
  $\Sch_{rfw}$ is an SW-category.
\end{lemma}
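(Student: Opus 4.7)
The plan is to verify each axiom of an SW-category from \cite{campbell}. Many of these axioms involve only the underlying cofibration and complement structure, which has already been shown in Section~\ref{sec:ex} to make $\Var$ into a CGW-category; in particular, the existence of an initial object, closure of closed and open immersions under composition, and the existence of complements and distinguished squares all follow at once. Consequently, the only new content concerns the class of weak equivalences specified in Definition~\ref{def:Varw}.

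First I would verify that the weak equivalences contain every isomorphism and are closed under composition. Every isomorphism $f: X \rto Y$ is witnessed as a weak equivalence by the length-one stratification $\emptyset \mrto Y$, since then $X \times_Y Y \cong X \cong Y$. Closure under composition is exactly the preceding lemma.

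The substantive checks are the compatibilities between weak equivalences and the cofibration/complement structure. The first is pullback stability: if $f: X \rto Y$ is a weak equivalence witnessed by a stratification $\{Y_i\}$ and $Z \rto Y$ is either a closed or an open immersion, then the pullback $X \times_Y Z \rto Z$ is a weak equivalence witnessed by $Z_i := Y_i \times_Y Z$, since on each stratum $Z_i \setminus Z_{i-1} \cong (Y_i \setminus Y_{i-1}) \times_Y Z$ the induced map is the base change of an isomorphism. The second is a gluing axiom: given a closed immersion $Z \mrto Y$ with open complement $U$ and $f: X \rto Y$, if the two restrictions $X \times_Y Z \rto Z$ and $X \times_Y U \rto U$ are weak equivalences witnessed by $\{Z_i\}$ and $\{V_j\}$ respectively, then the concatenated sequence
\[
\emptyset \mrto Z_1 \mrto \cdots \mrto Z_m = Z \mrto Z \cup \overline{V_1} \mrto \cdots \mrto Z \cup \overline{V_n} = Y
\]
(with closures taken in $Y$) is a stratification witnessing $f$ as a weak equivalence. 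The equality $\overline{V_j} \cap U = V_j$, which holds because $V_j$ is already closed in $U$, ensures the successive differences agree with the given data.

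The main obstacle is bookkeeping: the strata produced by pullback or by gluing must remain reduced, must fit together as a proper chain of closed subvarieties, and must carry isomorphisms on their successive differences. Since closed and open immersions in $\Var$ are stable under base change and preserve the reduced condition, and since a stratification of length $n$ concatenated with one of length $m$ yields a stratification of length $n+m$, each verification reduces to an elementary manipulation of the defining data of a weak equivalence.
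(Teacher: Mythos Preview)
Your proposal misidentifies the SW-category axioms that must be checked. The paper follows \cite[Defn.~3.24]{campbell}, whose weak-equivalence axioms are analogous to Waldhausen's: beyond containing the isomorphisms, one must verify (a) that \emph{subtraction} respects weak equivalences, i.e.\ given a commutative square with horizontal cofibrations $X\rcofib X'$, $Y\rcofib Y'$ and vertical weak equivalences $X\rwe Y$, $X'\rwe Y'$, the induced map $X'\setminus X \rto Y'\setminus Y$ is a weak equivalence; and (b) the \emph{pushout gluing lemma}, i.e.\ given a ladder $X'' \lcofib X \rcofib X'$ over $Y'' \lcofib Y \rcofib Y'$ with cartesian squares and vertical weak equivalences, the induced map on pushouts $X''\amalg_X X' \rto Y''\amalg_Y Y'$ is a weak equivalence. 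Neither of these appears in your proposal.

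Your ``pullback stability'' is closely related to (a) and would imply it once the square is cartesian, but you do not make this connection. More seriously, your ``gluing axiom'' is a local-to-global detection principle (if $f$ restricts to weak equivalences on a closed piece and its open complement, then $f$ is a weak equivalence), which is \emph{not} the gluing lemma required. The actual gluing lemma concerns preservation under pushouts, and its proof is the main content: the paper reduces to two-step stratifications on $Y'$ and $Y''$ and exhibits an explicit three-step stratification on $Y''\amalg_Y Y'$ built from pushouts of the pulled-back strata. Your argument does not address this at all, so as written it would not establish the lemma.
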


\begin{proof}
 For this we only need to check the axioms of SW-categories that apply to weak
 equivalences \cite[Defn.~3.24]{campbell}, which are wholly analagous to
 \cite[p.326]{waldhausen}. First, the isomorphisms are certainly contained in
 $w$. Second, we must check that subtraction respects weak equivalences. That
 is, if we have a commutative square with sides as indicated:
 \begin{diagram}
   { X & X' \\ Y & Y' \\};
   \cofib{1-1}{1-2} \cofib{2-1}{2-2} \we{1-1}{2-1} \we{1-2}{2-2}
 \end{diagram}
  then there is a weak equivalence $X' \setminus X \rto Y' \setminus Y$ making
  the induced square commute. Thus, we need a stratification on
  $Y' \setminus Y$. Since we are subtracting off $Y$, the stratification of $Y$
  will not come into play. Define the stratification to be
  \[ \initial = (Y' \setminus Y) \times_{Y'} Y'_0 \rcofib (Y'\setminus Y)
    \times_{Y'} Y'_1 \rcofib \cdots \rcofib Y' \setminus Y.\]
  
  Finally, we must check that in a diagram as below, where all the horizontal
  maps are cofibrations and the squares are pullbacks, the induced map between
  pushouts is a weak equivalence:
  \begin{diagram}
    { X'' & X & X'\\
      Y'' & Y & Y' \\};
    \diagArrow{left hook->}{1-2}{1-1} \cofib{1-2}{1-3}
    \diagArrow{left hook->}{2-2}{2-1} \cofib{2-2}{2-3}
    \we{1-1}{2-1} \we{1-2}{2-2} \we{1-3}{2-3}
  \end{diagram}
  Since $X' \rto Y'$ is a weak equivalence, $X \rto Y$ is trivially so: a
  stratification $Y'_i \mrto Y'$ pulls back to one on $Y$,
  $Y \times_{Y'} Y'_i \to Y$. A similar statement also holds for
  $X'' \rto Y''$ .

    It suffices to consider the case where both $X' \rto Y'$ and $X'' \rto Y''$
    are given by two step stratifications. Let these be $Y'_1 \mrto Y'$ and
    $Y''_1 \mrto Y''$. Denote the two induced stratifications on $Y$ by
    $Y^{(1)}_1 \mrto Y$ and $Y^{(2)}_1 \mrto Y$ so that
    $Y^{(1)}_1 = Y \times_{Y'} Y'_1$ and $Y^{(2)}_1 = Y \times_{Y''} Y''_1$. We
    now consider the three-step stratification
    \begin{diagram}
    {Y''_1 \amalg_{Y^{(2)}_1 \times_Y Y^{(1)}_1} Y'_1  & Y'' \amalg_{Y^{(1)}_1} Y'_1 & Y'' \amalg_Y Y'\\}; \mto{1-1}{1-2} \mto {1-2}{1-3}
    \end{diagram}
    One verifies that 
    \begin{align*}
      (Y'' \amalg_{Y^{(1)}_1} Y'_1)\setminus (Y''_1 \amalg_{Y^{(2)}_1 \times Y^{(1)}_1} Y'_1) & \cong   (Y'' \setminus Y''_1) \\
     (Y'' \amalg_Y Y')\setminus ( Y'' \amalg_{Y^{(1)}_1} Y'_1)  &\cong (Y' \setminus Y'_1)
    \end{align*}
\end{proof}

We now define our second helper-category, $\D$.

\begin{definition} \label{def:D}
  Let $\D$ be the category with
  \begin{description}
  \item[objects] finite disjoint unions of smooth $n$-dimensional varieties, written
    $\coprod_{i\in I} X_i$, where each $X_i$ is irreducible,
  \item[morphisms] $\coprod_{s\in S} X_s \rto \coprod_{t\in T} Y_t$ are maps of
    sets $f:S \rto T$ together with birational isomorphisms $X_s \rto
    Y_{f(s)}$.\footnote{Here, by ``birational isomorphism'' we mean an
      equivalence class of maps, rather than a specific map which is a
      birational isomorphism}
  \item[composition] induced by composition of set maps together with the
    composition of birational isomorphisms.
  \end{description}
  The category $\D$ has a forgetful functor to $\FinSet$ induced by mapping
  $\coprod_{s\in S} X_s$ to $S$.

  We put an ACGW-structure on $\D$ by declaring all morphisms with injective
  underlying maps of sets to be both e-morphisms and m-morphisms, and by setting
  the distinguished (resp. commutative) squares to be the squares that become
  distinguished (resp. commutative) in the ACGW-structure on $\FinSet$; the
  forgetful functor then becomes a functor of ACGW-categories. 

  The SW-structure on $\D$ is given by
  \begin{description}
  \item[cofibrations] morphisms whose underlying set map is injective,
  \item[complement maps] the same as the cofibrations, and
  \item[weak equivalences] isomorphisms.
  \end{description}

  With these definitions, the $S_\dotp$-construction gives equal structures for
  the $K$-theory of $\D$ considered as a CGW- or an SW-category.

  The ACGW-category $\D$ is equivalent to a disjoint union of categories of the
  form $\FinSet\wr G$, for $G$ a group of birational automorphisms (see Example~\ref{ex:wr}).
\end{definition}

The main work of this section goes into proving Propositions \ref{prop:ZtoC} and
\ref{prop:lambda} which together immediately imply Theorem~\ref{thm:equiv}.

\begin{proposition} \label{prop:ZtoC}
  For $n \leq \infty$,
  \[K^Z(\Sch_{rf}^n) \simeq K^C(\Sch_{rfw}^n),\]
  induced by taking each tuple of varieties in $\Sch_{rf}^n$ to their disjoint union.
\end{proposition}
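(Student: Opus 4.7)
The plan is to realize the comparison map as a levelwise equivalence between simplicial models computing the two $K$-theories. On the assembler side, $K^Z(\V^n)$ is built from a simplicial object whose vertices are finite disjoint covers of varieties of dimension at most $n$ by irreducible subvarieties, with higher simplices given by iterated refinements. On the $SW$-category side, $K^C(\Var_w^n)$ is built from a bisimplicial model on $S_\bullet \Var_w^n$ in which one direction encodes filtrations by cofibrations (as in the Waldhausen $S_\bullet$-construction), and the other direction incorporates the weak equivalences from Definition~\ref{def:Varw}.

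I define a simplicial map $\Phi$ between these models by sending a finite disjoint cover $\{U_i\}$ of a variety $X$ to the disjoint union $\coprod_i U_i$. The canonical morphism $\coprod_i U_i \rto X$ is a weak equivalence in $\Var_w^n$, because the disjoint cover itself supplies the stratification demanded by Definition~\ref{def:Varw}. Refinements on the assembler side thus map to composites of cofibrations and weak equivalences on the $\Var_w^n$ side via the same principle, so $\Phi$ is simplicial. To see $\Phi$ is a weak equivalence, I verify essential surjectivity up to weak equivalence together with matching of relations. Every variety $Y \in \Var_w^n$ is weakly equivalent to a disjoint union of smooth irreducible pieces, obtained by iteratively removing the smooth locus of the top-dimensional components and inducting on dimension; hence every object of $\Var_w^n$ lies in the weak-equivalence closure of the image of $\Phi$. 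Moreover, any two finite disjoint covers with weakly equivalent disjoint unions admit a common refinement in $\V^n$, obtained by intersecting the two underlying stratifications that witness the weak equivalences; this matches the assembler refinement relation on the nose.

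The case $n = \infty$ then follows by passing to homotopy colimits, since both $K$-theories commute with the dimension filtration and $\Phi$ is filtration-preserving by construction. The main obstacle is the bisimplicial coherence: one must verify that the auxiliary weak-equivalence direction on the $\Var_w^n$ side is absorbed by the refinement direction on the assembler side without contributing extra homotopy. This is a coherence check of the kind handled by a variant of the Swallowing Lemma already used in the proof of Theorem~\ref{thm:S.=Q}, applied in tandem with the identification of disjoint covers with stratifications in reduced schemes of finite type.
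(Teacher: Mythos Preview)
Your proposal has a genuine gap in its argument structure.  The core issue is that ``essential surjectivity up to weak equivalence together with matching of relations'' is not a criterion that proves a map of simplicial categories is a weak equivalence.  You are describing the intuition for why the two sides agree, but you never produce a levelwise statement strong enough to conclude anything about realizations.  In particular, the claim that two disjoint covers with weakly equivalent unions admit a common refinement is neither obviously true nor sufficient: weak equivalences in $\Var_w^n$ are witnessed by stratifications, and two such stratifications need not be comparable inside the assembler without further work.  Waving the bisimplicial coherence off to ``a variant of the Swallowing Lemma'' is precisely the place where the proof needs to happen, not where it can be deferred.

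There is also a mismatch in your model of the assembler side: an object of $\W(\V^{\vee m})$ is an $m$-tuple of finite families of varieties with disjoint indexing sets, not a disjoint cover of a single variety by irreducibles.  Once this is set up correctly, the paper's argument is much more direct than yours.  One writes down an explicit functor $F_m\colon \W(\V^{\vee m}) \to wS_m\Var_w$ sending $(\{A_{1i}\},\ldots,\{A_{mi}\})$ to the filtration diagram with $X_{i,j} = \coprod_{k=i+1}^{j} \coprod_\ell A_{k\ell}$, and then observes that $F_m$ has a \emph{right adjoint} $G_m$ sending a filtration diagram $X$ to the tuple of singletons $\{X_{0i}\setminus X_{0(i-1)}\}$.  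The unit and counit are written down explicitly (the counit $F_mG_m(X) \to X$ is exactly the weak equivalence in $\Var_w$ that reassembles the strata).  A levelwise adjunction is a levelwise homotopy equivalence, and this passes to realizations with no further coherence argument needed.  This is the step you are missing: rather than arguing indirectly about surjectivity and refinements, one exhibits the inverse equivalence on the nose as an adjoint.
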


\begin{proof}
  For conciseness of notation, we give the proof for the case $n = \infty$ and
  omit the $n$ from the notation.  The proof works identically for all finite
  $n$. Throughout this proof we freely use the notation and definitions of \cite{zakharevich_assembler}. 
  
  We construct a functor of simplicial categories
  $F_\dotp:\W(\Sch_{rf}^{\vee \dotp}) \rto wS_\dotp\Sch_{rfw}$ which has a
  levelwise right adjoint.  Thus the functor is levelwise a homotopy
  equivalence, and we get an equivalence on the geometric realizations of the
  simplicial categories.  This equivalence produces an equivalence
  $K^Z(\Sch_{rf})_1 \rto K^C(\Sch_{rfw})_1$, and (since these are both $\Omega$-spectra
  above level $1$) an equivalence of $K$-theories.

  The functor is defined in the following manner.  $\W(\Sch_{rf}^{\vee m})$ is the
  full subcategory of $\W(\Sch_{rf})^m$ consisting of those objects with disjoint
  indexing sets.  We will thus refer to objects of $\W(\Sch_{rf}^{\vee m})$ as tuples
  $(\{A_{1i}\}_{i\in I_1},\ldots,\{A_{mi}\}_{i\in I_m})$ in $\W(\Sch_{rf})^m$ and
  simply ensure that at all stages the indexing sets are disjoint.  Let
  $F_m(A_1,\ldots,A_m)$ be the functor $X : \tilde \Ar[m] \rto \Sch_{rfw}$ given by
  \[X_{i,j} = \coprod_{k=i+1}^j \coprod_{\ell\in I_k} A_{k\ell},\] with
  morphisms given by the natural inclusions into the coproduct.  A morphism of
  tuples gives a natural transformation of functors, each component of which is
  a weak equivalence in $\Sch_{rfw}$, so $F_m$ is well-defined.  The simplicial maps
  in $\W(\Sch_{rf}^{\vee \dotp})$ are induced by maps on the indexing sets, so
  these commute with the simplicial structure maps in $wS_\dotp\Sch_{rfw}$.  Thus
  $F_\dotp$ is a simplicial functor.

  It remains to check that $F_m$ has a right adjoint.  Given a diagram $X:\tilde
  \Ar[m] \rto \Sch_{rf}$, we define $G_m(X)$ to have as its $i$-th component
  $\{X_{0i}\bs X_{0(i-1)}\}_{\{i\}}$.

  We define the unit of the adjunction by taking each $\{A_{ji}\}_{i\in I_j}$ to
  $\{\coprod_{i\in I_j} A_{ji}\}_{\{j\}}$; this is a valid morphism in
  $\W(\Sch_{rf})$, so gives a valid morphism in $\W(\Sch_{rf})^m$, with the
  indexing sets disjoint by definition.

  Now consider $F_m \circ G_m$.  This takes a functor $X: \tilde \Ar[m] \rto
  \Sch_{rf}$ to the functor $X': \tilde \Ar[m] \rto \Sch_{rf}$, where
  \[X'_{ij} = \coprod_{k=i+1}^j X_{ij} \bs X_{i(j-1)}.\]
  There is a natural weak equivalence $X' \rto X$ by simply mapping each
  component to itself.  This gives the counit of the adjunction and completes
  the proof of the proposition.
\end{proof}

We can now define the map $\beta$ \eqnref{eq:beta}. 

\begin{definition} \label{def:beta} To define a map
  $K^C(\Sch^n_{rfw},\Sch^{n-1}_{rfw}) \rto K^C(\D)$ it suffices to define for
  all $r$, a map
  $|wS_\dotp^{(r)} S_\dotp(\Sch_{rfw}^n, \Sch_{rfw}^{n-1})| \rto |iS_\dotp^{(r)}
  \D|$.  In order to construct such a map it suffices to construct a partial
  functor $b_\dotp: S_\dotp(\Sch_{rfw}^n,\Sch_{rfw}^{n-1}) \rto \D$ defined on
  the subcategories of closed immersions, open immersions, and weak
  equivalences, as long as this functor is compatible with the simplicial
  structure maps and takes objects in the $S_\dotp$-construction to objects in
  the $S_\dotp$-construction.  An object of
  $S_m(\Sch_{rfw}^n, \Sch_{rfw}^{n-1})$ is a diagram
  \begin{diagram}
    { &&&& X_{m,m} \\
      &&&& \vdots \\
      && X_{2,2} & \cdots & X_{2,m} \\
      & X_{1,1} & X_{1,2} & \cdots & X_{1,m} \\
      Y_0 & Y_1 & Y_2 & \cdots & Y_m \\};
    \cofib{1-5}{2-5} \cofib{2-5}{3-5}
    \cofib{3-3}{3-4}^o \cofib{3-4}{3-5}^o
    \cofib{3-3}{4-3} \cofib{3-5}{4-5}
    \cofib{4-2}{4-3}^o \cofib{4-3}{4-4}^o \cofib{4-4}{4-5}^o
    \cofib{4-2}{5-2} \cofib{4-3}{5-3} \cofib{4-5}{5-5}
    \cofib{5-1}{5-2}^o \cofib{5-2}{5-3}^o \cofib{5-3}{5-4}^o \cofib{5-4}{5-5}^o
  \end{diagram}
  in which each $X_{i,j}\in \Sch^{n-1}_{rfw}$.  We define $b_m$ to take this
  diagram to the tuple containing the irreducible $n$-dimensional components of
  the nonsingular points of $Y_m$ (indexed over the set of irreducible
  $n$-dimensional components of $Y_m$).
\end{definition}

\begin{lemma}
  The partial functor $b_\dotp$ is well-defined and induces a map on $K$-theory.
\end{lemma}

\begin{proof}
  First, suppose that $X$ and $Y$ are irreducible and $n$-dimensional.  Then a
  weak equivalence $X \rwe Y$ is, by definition, a birational isomorphism.  In
  particular, this means that under $b_\dotp$, all weak equivalences in
  $S_\dotp(\Sch^n_{rfw}, \Sch^{n-1}_{rfw})$ are taken to isomorphisms in $\D$.
  An open embedding $X \rcofib^o Y$ is also a birational isomorphism; a closed
  embedding is an honest isomorphism, unless we allow $X$ to have dimension less
  than $n$; in that case, $X$ is taken to the empty tuple in $\D$.  Thus a
  diagram in the $S_\dotp$-construction of
  $S_\dotp(\Sch^n_{rfw}, \Sch^{n-1}_{rfw})$ is taken to a diagram with injective
  underlying maps of sets both vertically and horizontally (decorated with
  birational isomorphisms); the pushout condition translates to the analogous
  pushout condition on the underlying diagram of sets.  The weak equivalence
  direction is mapped to morphisms which are isomorphisms of the underlying maps
  of sets, decorated with birational isomorphisms.  This is exactly the
  $S_\dotp$-construction applied to $\D$, and thus each $b_m$ is well-defined.
  The simplicial structure maps never fully get rid of one of the $Y$'s in the
  bottom row of the diagram; since all of the horizontal maps in the diagram
  above are birational isomorphisms (as the complements have dimension strictly
  less than $n$) the partial functor is well-defined, and induces a map on
  $K$-theory.
\end{proof}

The map induced by $b_\dotp$ is $\beta$. 

We now consider the map $\rho$.

\begin{definition} \label{def:rho} The map
  $\rho: K^Z((\Sch_{rf}^n/i)_\dotp) \rto K^C(\D)$ is defined by a composition of
  two maps.  The first map is the map $K^Z((\Sch_{rf}^n/i)_\dotp) \rto K^Z(\D)$,
  defined by taking each irreducible scheme of dimension $n$ to its birational
  isomorphism type.  The second map $K^Z(\D) \rto K^C(\D)$ is induced by the map
  $K^Z(\D) \rto K(\SC(\D)) \rto K^C(\D)$, where the first map is the natural
  transformation taking $K^Z(\D)$ to the Waldhausen $K$-theory of the Waldhausen
  category $\SC(\D)$ (defined in \cite[Theorem 2.1, Proposition
  2.6]{Z-ass-pi1}), and the second map is induced by an equivalence of
  $K$-theories taking an object in $\SC(\D)$ (which is a tuple of tuples of
  birational isomorphism classes) to the ``flattened'' tuple (indexed by the
  disjoint union of the indexing sets of the tuples).  In this map, the key
  observation is that cofibrations in $\SC(\D)$ are cofibrations in the
  SW-category $\D$, while cofiber maps in $\SC(\D)$ are \emph{opposites} of
  complement maps in $\D$: a cofiber map in $\SC(\D)$ is induced by a map
  selecting a subset of the indexing set, and this is exactly a description of
  the opposite of a map in $\D$.
\end{definition}

\begin{proposition} \label{prop:beta}
  The map $\rho$ is a weak equivalence, and the diagram
  \begin{diagram}
    { K^Z((\Sch_{rf}^n/i)_\dotp) & K^C(\Sch^n_{rfw},\Sch^{n-1}_{rfw}) \\
      & K^C(\D)\\};
    \to{1-1}{1-2}^f \to{1-2}{2-2}^{\beta} \to{1-1}{2-2}_{\rho} 
  \end{diagram}
  commutes.  The map $\beta$ is therefore also a weak equivalence.
\end{proposition}

\begin{proof}
  The map $f$ is a weak equivalence because it is a map induced on homotopy
  cofibers by a pair of weak equivalences.  The map $\rho$ is a weak equivalence
  by \cite[Proposition 7.1]{zakharevich_assembler} (where it is the map $p$).

  We now check the commutativity of the diagram.  The map $f$ is defined
  analogously to the natural transformation in \cite[Proposition
  2.6]{Z-ass-pi1}, designed to make this triangle commute.  The map $\beta$ is
  defined analogously to $p$ in \cite[Proposition 7.1]{zakharevich_assembler},
  again designed to make this diagram commute.  In particular, both of these
  compositions take all objects in simplicial levels higher than $0$ (in the
  original categories) to the empty set, and take the objects in simplicial
  level $0$ to a tuple of birational isomorphism classes of varieties (with
  morphisms given by permutations of these decorated by birational
  isomorphisms).  The indexing set of each tuple is the set of irreducible
  components of the varieties, so this diagram does, indeed, commute. 

  Thus, by 2-of-3, $\beta$ is also a weak equivalence.
\end{proof}

Since $\beta$ is a weak equivalence, $g$ is a weak equivalence if and only if
$\lambda$ is.  Thus it remains to consider the map $\lambda$.

\begin{definition} \label{def:lambda}
  The map $\lambda: K^C(\Sch_{rf}^n,\Sch_{rf}^{n-1}) \rto K^C(\D)$ is defined to
  be the composition
  \[K^C(\Sch_{rf}^n, \Sch_{rf}^{n-1}) \rto K^C(\Sch_{rfw}^n, \Sch_{rfw}^{n-1})
    \rto^\beta K^C(\D).\]
\end{definition}

\begin{proposition} \label{prop:lambda} The map $\lambda$ is a weak equivalence.
\end{proposition}

\begin{proof}
  Let $\lambda': \Sch_{rf}^n \bs \Sch_{rf}^{n-1} \rto \D$ be the CGW-functor
  taking each variety of dimension $n$ to the set of birational isomorphism
  classes of its irreducible components.  This is actually an equivalence of
  categories (on the level of CGW-categories) with the inverse equivalence given
  by choosing a representative in each birational isomorphism class and taking
  an object in $\D$ to the disjoint union of its representatives.  Thus
  $\lambda'$ is a weak equivalence.

  Consider the following diagram:
  \begin{diagram}
    { \Omega|i S_\dotp S_\dotp (\Sch^n_{rf},\Sch^{n-1}_{rf})| &
      \Omega|QS_\dotp(\Sch^n_{rf},\Sch^{n-1}_{rf}) & \Omega |Q(\Sch^n_{rf}\bs\Sch^{n-1}_{rf})| \\
      \Omega|iS_\dotp \D| && \Omega|Q\D| \\};
    \we{1-1}{1-2} \we{1-2}{1-3} \we{2-1}{2-3} 
    \to{1-1}{2-1}_\lambda \to{1-3}{2-3}^{\lambda'}
  \end{diagram}
  The leftmost two horizontal maps are given by the natural transformation
  described in the proof of Theorem~\ref{thm:S.=Q} for the comparison between
  the $Q$-construction and the $S_\dotp$-construction.  The right-hand map in
  the top row is a weak equivalence by Theorem~\ref{thm:qLoc}.  Thus, by 2-of-3,
  $\lambda$ is a weak equivalence.
\end{proof}

\section{Proof of Theorem~\ref{thm:qLoc}} \label{sec:proofloc}

The goal of this section is to prove Theorem~\ref{thm:qLoc}.  The idea of the
proof is to use Quillen's Theorem B \cite[Theorem B]{quillen} applied to the
functor $Qs$.  There are therefore two steps to the proof: proving that the
theorem applies to $Qs$, and proving that the fiber agrees with $K(\A)$.

Let $i:\A \rto \C$ be the inclusion functor.  Then $Qi$ factors as
\begin{diagram}[6em]
  {Q\A & Qs_{\initial/} & Q\C \\};
  \to{1-1}{1-2}^{M \mapsto (M, 1_\initial)}
  \to{1-2}{1-3}^{(N,u) \mapsto N}
\end{diagram}
If Theorem B applies to $Qs$ then its fiber is $Qs_{\initial/}$.  In this case,
to show that the fiber agrees with $K(\A)$ it suffices to check that the
left-hand map in this factorization is a weak equivalence.  We see that the
theorem is thus a direct consequence of the following two propositions:

\begin{proposition} \label{prop:locinc}
  The inclusion $Q\A \rto Qs_{\initial/}$ is a homotopy equivalence.
\end{proposition}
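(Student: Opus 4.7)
The plan is to apply Quillen's Theorem A to the inclusion $j: Q\A \rto Qs_{\initial/}$, following the strategy of Quillen's localization proof \cite[Theorem 5]{quillen}. It suffices to show that for each $(N, u) \in Qs_{\initial/}$, the comma category $j/(N, u)$ has contractible nerve. An object of this comma category is a pair $(M, \alpha)$ with $M \in \A$ together with a morphism $\alpha : M \rto N$ in $Q\C$ such that $Qs(\alpha) \circ u_M = u$ in $Q(\C\bs\A)$.

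First, I would extract the $\A$-theoretic content of the datum $u : \initial \rto sN$. Unpacking the zigzag description of morphisms in $\C\bs\A$, together with the initiality of $\initial$ in both $\E$ and $\M$, one sees that $u$ is (up to equivalence) determined by an m-morphism $M_u \mrto N$ in $\C$ with $M_u \in \A$; in other words, $u$ encodes an ``$\A$-subobject'' of $N$. The closure of $\A$ under subobjects, quotients, and extensions ensures that the intermediate objects appearing in this unpacking remain inside $\A$.

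Next, I would prove that $j/(N, u)$ is filtered. Two objects $(M_1, \alpha_1), (M_2, \alpha_2)$ are combined by a pushout-like construction: axiom (PP) of the ACGW structure on $\C$ produces an object $M \in \C$ with compatible morphisms from $M_1$ and $M_2$, and closure of $\A$ under extensions places $M$ inside $\A$, equipped with a morphism to $(N, u)$ through which both factor. Parallel morphisms in $j/(N, u)$ are equalized using axiom (E) of the localization, which provides a common object through which two representatives of the same morphism in $\C\bs\A$ factor compatibly. Throughout, condition (W) --- well-representability or negligibility of $\A$ in $\C$ --- supplies the filteredness of the intermediate categories $\I^m_V$ and $\I^e_V$ needed to choose compatible representatives, while condition (CGW) guarantees that $Q(\C\bs\A)$ makes sense in the first place.

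The main obstacle will be the bookkeeping required to track the layered zigzag structure: each morphism in $\C\bs\A$ is itself a diagram in $\C$ with multiple D-marked legs, and morphisms in $Qs_{\initial/}$ involve an additional $Q$-construction layer on top of that. The technical axioms (W), (CGW), and (E) are engineered precisely to make this bookkeeping tractable, so once the unpacking in the first step is in place the filteredness argument closely parallels Quillen's original one. I expect the detailed verification of closure and equalization at each layer --- and in particular the interplay between the m-direction and e-direction hypotheses in (W) --- to be the most intricate portion of the proof.
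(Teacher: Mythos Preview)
Your overall strategy---apply Theorem~A to the inclusion $j:Q\A \rto Qs_{\initial/}$ and show each comma category $j/(N,u)$ is contractible---matches the paper.  But your execution plan rests on a misreading of the datum $u$, and this causes you to invoke much more machinery than is needed.

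A morphism $u:\initial \rto sN$ in $Q(\C\bs\A)$ is \emph{not} determined by an m-morphism $M_u \mrto N$ with $M_u\in\A$.  Using initiality of $\initial$ together with Lemma~\ref{lem:morrep'}, $u$ is represented by a diagram $\initial \erto B' \mrto N$ in $\C$ with \emph{no} constraint that $B'$ lie in $\A$; it is an arbitrary ``layer'' in $N$, considered modulo the equivalence relation in $\C\bs\A$.  Your description of $u$ as an $\A$-subobject would, if followed, lead you to analyze the wrong comma category.

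Once $u$ is correctly unpacked, the argument is simpler than you anticipate.  By Lemma~\ref{lem:preorder}, $Q\C_{/N}$ (and hence $j/(N,u)$) is already a preorder, so no equalization of parallel morphisms is required and condition~(E) plays no role here.  The paper then shows $j/(N,u)$ is nonempty and \emph{cofiltered}: given two objects one produces a common refinement \emph{below} both using ordinary pullbacks and mixed pullbacks in the pre-ACGW structure, not pushouts via~(PP).  Likewise, condition~(W) is not used for this proposition at all; it is needed only for the companion Proposition~\ref{prop:locThmB}.  Your plan to build an upper bound via $\star$ from~(PP) could perhaps be made to work, but it is working against the grain: the morphisms in the comma category are controlled by $L_N\B$-type layer data, for which intersection (pullback) is the natural operation.
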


\begin{proposition} \label{prop:locThmB}
  Quillen's Theorem B applies to the functor $Qs$. More concretely, for any $u:V
  \rto V'$ in $Q(\C\bs\A)$, the induced functor $u^*:Qs_{V'/} \rto Qs_{V/}$ is a
  homotopy equivalence.
\end{proposition}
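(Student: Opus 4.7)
The strategy is to factor $u$ into simpler pieces and construct a homotopy inverse to $u^*$. A morphism $u: V \to V'$ in $Q(\C\bs\A)$ is represented by a zig-zag $V \esrto X \msrto V'$, so $u^*$ factors as the composition of the pullbacks along $V \esrto X$ and $X \msrto V'$. It therefore suffices to prove the statement when $u$ is either a pure m-morphism or a pure e-morphism in $\C\bs\A$. By the symmetry in the CGW- and ACGW-axioms (which exchange $\M \leftrightarrow \E$ and $c \leftrightarrow k$, as used throughout Sections~\ref{sec:cgw} and~\ref{sec:acgw}), it suffices to treat the m-morphism case.

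Now suppose $u: V \msrto V'$ is an m-morphism in $\C \bs \A$, represented by a diagram $V \eDlto V_1 \mDlto W \eDrto V_2 \mrto V'$ in $\C$. Condition (W) is what allows us to reduce to the case where $u = s(v)$ for a genuine m-morphism $v: V \mrto V'$ of $\C$: if $\A$ is m-well-represented, the filteredness of $\I_V^m$ and $\I_{V'}^m$ lets us realize $Qs_{V/}$ and $Qs_{V'/}$ as filtered colimits (over representatives) of analogous comma categories defined using morphisms in $\C$, and filtered colimits of homotopy equivalences are homotopy equivalences; if $\A$ is m-negligible, the representative $V_2 \mrto V'$ can be canonically promoted so that $u$ arises from an actual m-morphism in $\C$.

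For $u = s(v)$ with $v \in \M_\C$, I would construct a candidate homotopy inverse $u_\sharp: Qs_{V/} \rto Qs_{V'/}$ using axioms (PP) and (U). Given $(N, \psi)$ with $\psi: V \to sN$ represented by $V \esrto Y \mrto sN$, let $N' = Y \star_V V'$ (formed via the m-morphism $v:V\mrto V'$ and the m-part of $\psi$), together with the induced structure map $V' \to sN'$ coming from the universal property in (PP). Functoriality of $N'$ in $(N, \psi)$ and independence of the choice of representative of $\psi$ is exactly what condition (E) was engineered to guarantee. The natural transformations relating $u^* u_\sharp$ and $u_\sharp u^*$ to the identity functors come from the defining commutative squares of the $\star$- and $\oslash$-constructions; upon realization these yield the required homotopies.

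\textbf{Main obstacle.} The principal difficulty is not any single calculation but the verification that $u_\sharp$ is well-defined on the intricate equivalence relation governing morphisms in $Q(\C\bs\A)$. Unpacking the interaction of (E), (PP), and the nested zig-zag presentations of m- and e-morphisms in $\C\bs\A$, and confirming that the various mixed-pullback constructions commute coherently, is the bulk of the technical work. A secondary subtlety is the reduction step via (W): in the well-represented case one must show that the formation of $Qs_{V/}$ commutes with the filtered colimit over $\I_V^m$, which in turn requires that the $\star$-construction be compatible with refinement of representatives.
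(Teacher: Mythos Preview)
Your reduction to a pure m-morphism via symmetry is the right first move, but the argument thereafter has real gaps. The construction of $u_\sharp$ does not type-check: you set $N'=Y\star_V V'$ with $Y$ the middle of a representative $V\esrto Y\mrto sN$ of $\psi$, but the $\star$ of axiom~(PP) requires two m-morphisms out of a common object, and $V\esrto Y$ is an e-morphism in $\C\bs\A$, not an m-morphism in $\C$. Worse, $\psi$ is a morphism in $Q(\C\bs\A)$, so a representative is a zig-zag of zig-zags in $\C$, and there is no evident $\star$-diagram to extract from it. Your use of~(W) is also off: $Qs_{V/}$ depends only on $V$ as an object of $\C\bs\A$, so there is no filtered system of comma categories indexed by $\I_V^m$ whose colimit is $Qs_{V/}$; the filtered colimit appearing in the paper is of a different diagram entirely. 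Finally, condition~(E) is not a functoriality statement for $\star$; it is an equalization statement for representatives, and is used for a different purpose.

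The paper avoids constructing any inverse to $u^*$. It first reduces further to the case $u:\initial\to V$: since $\initial$ is initial in $Q(\C\bs\A)$, any $u:V\to V'$ satisfies $(\initial\to V')=u\circ(\initial\to V)$, so once $(\initial\to V)^*$ is a homotopy equivalence for every $V$, the general case follows by 2-of-3. For $u:\initial\to V$ it then introduces, for each representative $(N,\phi)\in\I_V^m$, an auxiliary category $\sH_N$ together with functors $P_{(N,\phi)}:\sH_N\to\F_V\hookrightarrow Qs_{V/}$ and $k_N:\sH_N\to Q\A\cong\F_\initial\hookrightarrow Qs_{\initial/}$ making diagram~(\ref{diag:loc}) commute up to homotopy. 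One shows $k_N$ is a homotopy equivalence via Theorem~A and a fibered-category argument over a category $\J_N$ with terminal object; then $P_{(N,\phi)}$ is shown to be a homotopy equivalence using~(W): in the m-negligible case by a direct Theorem~A argument, and in the m-well-represented case by proving $\colim_{\I_V^m}\sH_{(-)}\cong\F_V$ (this is where~(E) enters, to check injectivity on objects) and applying Quillen's lemma on filtered colimits of homotopy equivalences. The result then follows from 2-of-3 applied to~(\ref{diag:loc}).
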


The rest of this section is taken up with the proof of these two propositions.
We write $\C = (\E, \M)$ and $\A = (\E_\A, \M_\A)$.  
We begin by analyzing how morphisms in $\C\bs\A$ and $Q(\C\bs\A)$ work.  

\begin{lemma} \label{lem:1of3}
  $\M_\A$ and $\E_\A$ satisfies 1-of-3, in the sense that $\M_\A$ and $\E_\A$
  are subcategories of $\M$ and $\E$, respecively, and given any composable
  morphisms $f,g\in \M$ (resp. $\E$), if $gf\in \M_\A$ (resp. $\E_\A$) then so
  are $f$ and $g$.
\end{lemma}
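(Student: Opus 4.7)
The plan is to produce, for any composition $A \mrto B \mrto C$ in $\M$, a single distinguished square whose corners are $\initial$, $B^{c/C}$, $A^{c/B}$ and $A^{c/C}$, and then to read off both the closure of $\M_\A$ under composition and the 1-of-3 property directly from the three closure hypotheses on $\A$.

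First, applying the construction from the proof of Lemma~\ref{lem:opensquare} to $A \mrto B \mrto C$ produces an m-morphism $h\colon A^{c/B} \mrto A^{c/C}$ together with the identification $c(h) = (B^{c/C} \erto A^{c/C})$. I would then apply Axiom~(K) to the e-morphism $h^c$; since $c$ and $k$ are mutual inverses on arrows one has $k(h^c) = h$, so the axiom yields the distinguished square
\[\dsq{\initial}{B^{c/C}}{A^{c/B}}{A^{c/C}}{}{}{h^c}{h}\]
in which the bottom m-morphism is $h$ and the right e-morphism is $h^c$.

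With this square in hand the remainder is a direct application of the closure hypotheses. Identities lie in $\M_\A$ trivially because $\initial \in \A$ (the initial object of $\A$ coincides with that of $\C$). For composition, if $f\colon A \mrto B$ and $g\colon B \mrto C$ both lie in $\M_\A$, then $A^{c/B}$ and $B^{c/C}$ are in $\A$, so closure under extensions applied to the square gives $A^{c/C} \in \A$, i.e.\ $gf \in \M_\A$. Conversely, if $gf \in \M_\A$ then $A^{c/C} \in \A$, so closure under subobjects applied to the m-morphism $A^{c/B} \mrto A^{c/C}$ gives $A^{c/B} \in \A$, while closure under quotients applied to the e-morphism $B^{c/C} \erto A^{c/C}$ gives $B^{c/C} \in \A$. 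This establishes simultaneously that $\M_\A$ is a subcategory and that it satisfies 1-of-3.

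The statement for $\E_\A$ follows by the symmetry of the CGW axioms between m- and e-morphisms: Lemma~\ref{lem:opensquare}, Axiom~(K), and the three closure conditions on $\A$ are all self-dual, so the dual argument using $A^{k/B}$, $B^{k/C}$, $A^{k/C}$ produces the analogous distinguished square and yields the claim. No serious obstacle is anticipated; the argument is essentially a diagram identification built on Lemma~\ref{lem:opensquare} and Axiom~(K), followed by a direct use of the three closure properties. The subtlest point is simply verifying that the square extracted from Axiom~(K) really has the claimed corners, which reduces to the fact that $k(h^c) = h$.
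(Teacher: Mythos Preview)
Your proposal is correct and follows essentially the same route as the paper: both arguments produce the distinguished square with corners $\initial$, $B^{c/C}$, $A^{c/B}$, $A^{c/C}$ and then read off closure under composition from closure under extensions, and 1-of-3 from closure under subobjects and quotients. The only cosmetic difference is that you extract the square by citing the intermediate step from the proof of Lemma~\ref{lem:opensquare} together with Axiom~(K), whereas the paper reconstructs the same square directly by stacking two distinguished squares and applying $k^{-1}$ and $c$; the underlying constructions are identical.
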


\begin{proof}
  We prove this for $\M_\A$; the result for $\E_\A$ follows by duality.

  Suppose that we are given $f:A \mrto B$ and $g: B \mrto C$ in $\C$.  This
  corresponds to a diagram
  \begin{diagram}
    { & \initial & B^{c/g} \\      
      \initial & A^{c/f} & A^{c/gf} \\
      A & B & C \\ };
    \mto{1-2}{1-3} \mto{2-1}{2-2} \mto{2-2}{2-3} \mto{3-2}{3-3}
    \mto{3-1}{3-2} 
    \eto{1-3}{2-3} \eto{2-3}{3-3} \eto{1-2}{2-2} \eto{2-2}{3-2}
    \eto{2-1}{3-1}
    \dist{2-1}{3-2} \dist{2-2}{3-3} \dist{1-2}{2-3}
  \end{diagram}
  The lower-left square exists by the definition of $A^{c/f}$; the lower-right
  square exists by applying $k^{-1}$ to the bottom row; the upper square exists
  because $(A^{c/f})^{c/A^{c/gf}} \cong B^{c/g}$ by the definition of $c$.
  Consider the upper square; since $\A$ is closed under subobjects,
  quotients and extensions, $A^{c/gf}$ is in $\A$ if and only if $A^{c/f}$ and
  $B^{c/g}$ are.  Thus, if $f$ and $g$ are in $\M_\A$ so is $gf$ (showing that
  $\M_\A$ is a subcategory) and if $gf$ is in $\M_\A$ then $f$ and $g$ must be,
  as well. 
\end{proof}

\begin{lemma}  \label{lem:presMA}
  The categories $\E_\A$ and $\M_\A$ satisfy the following properties:
  \begin{itemize}
  \item[(a)] The subcategories $\E_\A$ and $\M_\A$ are preserved under pullbacks
    and mixed pullbacks along morphisms in $\E$ and $\M$.
  \item[(b)] Pullback squares and mixed pullback satisfy 3-of-4: if
    three of the morphisms in a square are in $\M_\A$ or $\E_\A$, the fourth
    must be as well.
  \end{itemize}
\end{lemma}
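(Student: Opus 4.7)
My plan is to use Axiom (U) together with the distinguished squares of Axioms (K) and (C) to identify the kernels and cokernels appearing in pullback and mixed pullback squares, then invoke the closure hypotheses on $\A$ (under subobjects, quotients, and extensions). Part (b) will follow from part (a) combined with Lemma~\ref{lem:1of3}.

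For part (a), I first consider an honest pullback square in $\M$ with corners $P, A, B, C$ and $P = A \times_C B$. Axiom (U) identifies this commutative square in $\Ar_\circlearrowleft \M$ (viewed with horizontals $P \mrto B$ and $A \mrto C$) with a pullback square in $\E$, yielding in particular a morphism $P^{c/B} \erto A^{c/C}$; if $A \mrto C \in \M_\A$ so that $A^{c/C} \in \A$, closure under quotients forces $P^{c/B} \in \A$, whence $P \mrto B \in \M_\A$. For the mixed pullback of $A \mrto C \in \M_\A$ along $B \erto C$, I apply Axiom (K) to $pr_B \colon A^c \times_C B \erto B$: its distinguished square has parallel m-morphism edges with isomorphic cokernels, so $(A \oslash_C B)^{c/B} \cong A^c \times_C B$, and this lies in $\A$ because $A^c \in \A$ and the projection $A^c \times_C B \erto A^c$ (an e-morphism by Axiom (P)) is handled by closure under quotients. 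The dual arguments (for e-morphisms in $\E_\A$) use the alternate expression $A \oslash_C B = (A \times_C B^k)^{c/pr_A}$ together with Axiom (C) and closure under subobjects, via the identification $(A \oslash_C B)^{k/A} \cong A \times_C B^k$.

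For part (b), I combine part (a) with Lemma~\ref{lem:1of3}. In an honest pullback square with three of the four m-morphisms in $\M_\A$, the cases where the missing morphism is parallel to a decorated one are immediate from part (a); the remaining cases exploit that $\M_\A$ is closed under composition and satisfies 1-of-3, so having the composite and one factor decorated forces the other factor to be decorated. The analogous combinatorics handles mixed pullback squares, with the identifications from part (a) translating decorations between m- and e-morphism sides. The main obstacle I anticipate is in the mixed-pullback 3-of-4 cases where the missing morphism has the opposite type (m vs.\ e) from the three given; resolving these cleanly requires combining the above $c$- and $k$-identifications with closure of $\A$ under extensions in order to propagate $\A$-membership through the mixed square.
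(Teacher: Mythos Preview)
Your overall strategy matches the paper's: both parts rest on Axiom~(U) to identify cokernels (or kernels) across the square, then invoke the closure hypotheses on $\A$; and you correctly isolate that the ``hard'' mixed 3-of-4 case is exactly where closure under extensions enters (the paper handles precisely that case explicitly and waves at the rest as analogous). Your use of Lemma~\ref{lem:1of3} for the pure-$\M$-pullback 3-of-4 cases is a clean alternative to repeating the $c$/$k$ argument.

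There is one genuine slip in your part~(a) for honest pullbacks in $\M$. A pure $\M$-pullback square is \emph{not} a morphism in $\Ar_\circlearrowleft \M$ (commutative squares in the enhanced double category have e-morphisms vertically), so you cannot apply $c:\Ar_\circlearrowleft\M\to\Ar_\times\E$ to it. The correct move is to regard the pullback square as a morphism in $\Ar_\times\M$ and apply $k^{-1}:\Ar_\times\M\to\Ar_\circlearrowleft\E$. This produces a commutative square whose top edge is an \emph{m-morphism} $P^{c/B}\mrto A^{c/C}$ (not an e-morphism), and the conclusion $P^{c/B}\in\A$ then follows from closure of $\A$ under \emph{subobjects}, not quotients. (In the variety picture: if $A,B\subseteq C$ are closed with $P=A\cap B$, then $B\smallsetminus A\hookrightarrow C\smallsetminus A$ is a closed immersion, not an open one.) With this correction your argument goes through, and indeed this is what the paper means by ``the other proofs of closure under pullbacks follow analogously.''
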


\begin{proof}
  We first prove (a). Suppose that we have a square
  \[\csq{A}{B}{C}{D}{f'}{}{}{f}.\]
  We want to show that if $f$ is in $\M_\A$, so is $f'$.  Applying $c$ to this
  diagram produces a pullback square
  \begin{diagram}
    { A^{c/f' }& B \\ C^{c/f} & D \\};
    \eto{1-1}{1-2} \eto{2-1}{2-2} \eto{1-1}{2-1} \eto{1-2}{2-2}
  \end{diagram}
  By definition, $C^{c/f'}\in \A$; thus, since $\A$ is closed under quotients,
  $A^{c/f}\in \A$, as desired.  The other proofs of closure under pullbacks
  follow analogously.  

  We turn our attention to (b). To check 3-of-4, consider a square as above
  where we know that $A \mrto B$ is in $\M_\A$ and $B \erto D$ is in $\E_\A$.
  Because $\E_\A$ is closed under pullbacks, it follows that
  $A^{c/f} \erto C^{c/f'}$ is also in $\E_\A$.  Thus we have a distinguished
  square
  \[\dsq{\initial}{A^{c/f}}{(A^{c/f})^{k/c}}{C}{}{}{}{}\]
  in which we know everything but $C$ is in $\A$.  Since $\A$ is closed under
  extensions, $C\in \A$ as well.  The other forms of 3-of-4 follow analogously.
\end{proof}

This proposition implies that we can identify the isomorphisms in $\C\bs\A$ in
the following manner:
\begin{lemma} \label{lem:morrep'}
  An m-morphism in $\C\bs\A$ represented by a diagram
  \[A \eDlto A' \mDlto X \eDrto B' \mrto B\] is an isomorphism if and only if
  $B' \mrto B$ is in $\M_\A$; the dual statement holds for e-morphisms.

  Any morphism $u:A \rto B$ in $Q(\C\bs\A)$ can be represented by a diagram
  \[A  \mDlto A'  \eDlto X \erto B' \mrto B \]
  in $\C$.
  Such a diagram represents an isomorphism if and only if $X \erto B'$ is in
  $\E_\A$ and $B' \mrto B$ is in $\M_\A$.  
\end{lemma}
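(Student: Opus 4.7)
The plan is to split the statement into three assertions---the iso-criterion for m-morphisms in $\C\bs\A$, the existence of the $5$-arrow canonical form for morphisms in $Q(\C\bs\A)$, and the iso-criterion for $Q$-morphisms---and handle them in that order.

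For the first assertion I would exploit the design principle of the localization $\C\bs\A$: every $\mathsf{D}$-arrow is an isomorphism there, since for any $f \in \M_\A$ one can write down an explicit $5$-arrow representative of an inverse using $f$ itself together with identities as the remaining arrows, and symmetrically for e-arrows. In a representation $A \eDlto A' \mDlto X \eDrto B' \mrto B$ the first three legs are thus isomorphisms in $\C\bs\A$, so the entire composite is an isomorphism iff $B' \mrto B$ is. It remains to prove: a plain m-morphism $f: B' \mrto B$, viewed in $\C\bs\A$ via the trivial $5$-arrow diagram, is an isomorphism iff $f \in \M_\A$. One direction is immediate (then $f$ itself is a $\mathsf{D}$-arrow). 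For the converse, any inverse gives, via the composition rule (which relies on Lemma~\ref{lem:axiomC} and mixed pullbacks from Axiom (U)), an equivalence of $5$-arrow diagrams representing $\mathrm{id}_B$; unpacking that equivalence and applying Lemma~\ref{lem:1of3} to the chain of m-morphisms that appears forces $f \in \M_\A$.

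For the second assertion, any morphism $u: A \rto B$ in $Q(\C\bs\A)$ is a zigzag $A \esrto Z \msrto B$ whose two legs each admit a $5$-arrow representation in $\C$; concatenating them yields a long zigzag. The aim is to collapse this to the desired form $A \mDlto A' \eDlto X \erto B' \mrto B$. I would do so by iteratively commuting an adjacent m-arrow past an adjacent e-arrow via Lemma~\ref{lem:axiomC}, and collapsing back-to-back pairs $\eslto \eDrto$ (or $\mslto \mDrto$) via the pullbacks supplied by Axioms (P) and (U). Lemma~\ref{lem:presMA}(a) ensures that the $\mathsf{D}$-property is preserved under these pullbacks, so every intermediate zigzag retains the correct structure; after finitely many moves one arrives at the required $5$-arrow form, unique up to the equivalence relation on $\C\bs\A$ by the uniqueness clauses built into those axioms.

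For the third assertion, Lemma~\ref{lem:iso} applied inside the CGW-category $\C\bs\A$ (which is CGW by hypothesis (CGW) of Theorem~\ref{thm:qLoc}) implies that a $Q$-morphism $A \esrto X \msrto B$ is an isomorphism iff both legs $A \esrto X$ and $X \msrto B$ are isomorphisms in $\C\bs\A$; combined with the first assertion and its dual this yields the claimed criterion that $X \erto B' \in \E_\A$ and $B' \mrto B \in \M_\A$. The hardest step will be the collapse argument in the second assertion: carefully tracking which arrows retain the $\mathsf{D}$-designation through successive commutations, and verifying well-definedness up to the equivalence on $\C\bs\A$, requires nontrivial bookkeeping with Lemma~\ref{lem:presMA}. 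The other two parts are relatively formal once the first is in place.
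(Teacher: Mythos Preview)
Your proposal is correct and follows essentially the same route as the paper: the paper handles the first assertion by the same ``invertible iff last leg is in $\M_\A$'' reduction (citing Lemmas~\ref{lem:1of3} and \ref{lem:presMA} for the converse---you should invoke \ref{lem:presMA} too, since the composition passes through mixed pullbacks), draws out your ``iterative collapse'' for the second assertion as one explicit staircase diagram, and deduces the third assertion from Lemma~\ref{lem:iso} and the first part exactly as you do, tracing the $\E_\A/\M_\A$ conditions through that diagram via 1-of-3. One small notational slip: in your third paragraph the generic middle object of the $Q$-zigzag should not be called $X$, since that letter already names a specific node of the five-arrow form; the decomposition you want is $A \esrto B' \msrto B$.
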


\begin{proof}
  If $\B'\mrto B$ is in $\M_\A$ then the given diagram represents an isomorphism
  by definition (by reversing the composition for the inverse).  Conversely, if
  an m-morphism has an inverse then tracing through the definition of
  composition and using Lemmas~\ref{lem:1of3} and \ref{lem:presMA} gives
  that $B'\mrto B$ must be in $\M_\A$.

  A morphism $A \rto B$ in $Q(\C\bs\A)$ is represented by a composition of an
  e-morphism $A \esrto C$ and an m-morphism $C \msrto B$.  We can represent
  these by the top and right side of the following diagram:
  \begin{diagram}
    { A & A' & X & C' & C \\
      & \phantom{Z} & \phantom{Z}& \phantom{Z} & C'' \\
      A'' & & Z & \phantom{Z} & Y  \\
      & & & \phantom{Z} & B' \\
      & & B'' & & B \\};
    \mDto{1-2}{1-1} \eDto{1-3}{1-2} \mDto{1-3}{1-4} \eto{1-4}{1-5}
    \eDto{2-5}{1-5} \mDto{3-5}{2-5} \eDto{3-5}{4-5} \mto{4-5}{5-5}
    \eDto{2-4}{1-4} \eto{2-4}{2-5}
    \eDto{2-3}{1-3} \mDto{2-3}{2-4} \comm{1-3}{2-4}
    \comm{2-4}{3-5}
    \mDto{3-4}{2-4} \eto{3-4}{3-5} \eto{3-4}{4-5} \mto{3-4}{4-4} \eto{4-4}{5-5}
    \dist{4-4}{4-5}
    \eDto{2-3}{1-2} \mDto{3-3}{2-3} \mDto{3-3}{3-4}
    \mto{3-3}{4-4}
    \eDto{2-2}{1-1} \mDto{2-3}{2-2} \dist{1-2}{2-2} \mDto{3-3}{2-2}
    \eDto{3-3}{3-1} \mDto{3-1}{1-1} \dist{3-1}{2-2}
    \eto{3-3}{5-3} \mto{5-3}{5-5} \dist{5-3}{4-4}
  \end{diagram}
  The rest of the diagram shows that the composition around the bottom is an
  equivalent representation of this morphism; its construction liberally uses
  the previous lemmas and results about CGW-categories in Section~\ref{sec:cgw}.

  Since morphisms in $Q(\C\bs\A)$ are isomorphisms exactly when both components
  are isomorphisms (by Lemma~\ref{lem:iso}), the composition is an isomorphism
  if and only if the morphisms $C \erto C'$ and $B' \mrto B$ are isomorphisms,
  meaning that they are in $\E_\A$ and $\M_\A$, respectively.  If this is the
  case then $Z \erto B''$ and $B'' \mrto B$ are in $\E_\A$ and $\M_A$,
  respectively, and this represents an isomorphism.  Conversely, if this is an
  isomorphism then we must have $Z \erto B''$ and $B'' \mrto B$ in $\E_\A$ and
  $\M_\A$; tracing through and using that $\E_\A$ and $\M_\A$ satisfy 1-of-3 we
  obtain the converse.
\end{proof} 

We turn our attention to proving Proposition~\ref{prop:locinc}.

\begin{definition}
  Let $V\in Q(\C\bs\A)$, and let $\F_V$ be the full subcategory of $Qs_{V/}$ of
  those objects $(M, u:V \rto sM)$ in which $u$ is an isomorphism.
\end{definition}

Proposition~\ref{prop:locinc} is the $V = \initial$ case of the following:
\begin{proposition} \label{prop:10.7}
  The inclusion $\iota_V: \F_V \rto Qs_{V/}$ is a homotopy equivalence for all
  $V\in Q(\C\bs\A)$.
\end{proposition}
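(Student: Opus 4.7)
The plan is to construct a functor $r : Qs_{V/} \rto \F_V$ that realizes the inclusion $\iota_V$ as one half of an adjoint pair, from which the homotopy equivalence will follow formally. Given any object $(M, u) \in Qs_{V/}$, Lemma~\ref{lem:morrep'} yields a representation of $u : V \rto sM$ as a diagram in $\C$
\[
V \mDlto A' \eDlto X \erto B' \mrto M.
\]
I will set $r(M, u) := (X, \bar u)$, where $\bar u : V \rto sX$ is represented by $V \mDlto A' \eDlto X$ with identity terminal components. The identity on $X$ lies trivially in both $\E_\A$ and $\M_\A$, so Lemma~\ref{lem:morrep'} guarantees that $\bar u$ is an isomorphism and hence $(X, \bar u) \in \F_V$. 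The counit $\iota_V r \Rightarrow 1$ at $(M, u)$ will be the morphism in $Qs_{V/}$ induced by the $Q\C$-morphism $X \erto B' \mrto M$; the composition $s(X \erto B' \mrto M) \circ \bar u$ recovers the original representation of $u$ by construction.

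The verification of adjointness proceeds in three main steps. First, I will check that $r$ is well-defined on objects: the representation of Lemma~\ref{lem:morrep'} is canonical only up to equivalence, and distinct representations of the same $u$ must be shown to yield canonically isomorphic $(X, \bar u)$ in $\F_V$. The key tool is condition~(E) of Theorem~\ref{thm:qLoc}, which ensures that two parallel $e$-$m$ factorizations of a single $\C\bs\A$-morphism become isomorphic after applying the mixed pullback $\oslash$. Second, I will establish functoriality: a morphism $f : (M, u) \rto (M', u')$ in $Qs_{V/}$ is a $Q\C$-morphism $f : M \rto M'$ satisfying $s(f) \circ u = u'$, and the induced map $r(f) : X \rto X'$ between middle terms is obtained by composing the canonical representations and invoking Axiom~(U); Lemmas~\ref{lem:1of3} and~\ref{lem:presMA} then ensure that the decorations $\M_\A$ and $\E_\A$ are preserved under the requisite pullbacks. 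Third, I will verify that the unit $1_{\F_V} \Rightarrow r \iota_V$ is a natural isomorphism: for $(N, v) \in \F_V$ with $v$ already an isomorphism, the canonical representation has both terminal pieces in the decorated subcategories, so extracting the middle term yields an object canonically isomorphic to $N$ in $\F_V$.

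The main obstacle is handling the many layers of equivalence relations defining morphisms in $\C\bs\A$ and composition in $Q(\C\bs\A)$. Condition~(E) resolves ambiguity in individual canonical representations, while condition~(W) ensures that choices can be made coherently along morphisms: in the m-well-represented or e-well-represented setting the competing choices assemble into a filtered system, and in the m-negligible or e-negligible case the superfluous components can be collapsed away. A cleaner but essentially equivalent approach is to invoke Quillen's Theorem~A, showing instead that for each $(M, u) \in Qs_{V/}$ the comma category $\iota_V/(M, u)$ of lifts to $\F_V$ is filtered, and hence contractible, with filteredness a direct consequence of condition~(W). Either way, the technical crux lies in translating the abstract hypotheses of Theorem~\ref{thm:qLoc} into concrete diagrammatic manipulations inside $\C$.
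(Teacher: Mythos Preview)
Your primary approach---constructing a right adjoint $r$ to $\iota_V$---has a genuine gap. The middle object $X$ in a representation $V \mDlto A' \eDlto X \erto B' \mrto M$ of $u$ is not determined up to canonical isomorphism by $(M,u)$. Condition~(E) does not fix this: it says that two diagrams $A \eDlto X \mDrto B$ and $A \eDlto X' \mDrto B$ representing the same morphism become isomorphic only after applying $\oslash_B C$ for some $C$ depending on the pair, so it yields no canonical choice of $X$. An adjoint would force each comma category $\iota_V/(M,u)$ to have a terminal object; the paper shows only that it is cofiltered, which is strictly weaker and indicates that no such adjoint exists in general.

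Your alternative via Theorem~A is the paper's actual route, but two details are off. First, the relevant comma category $\iota_V/(M,u)$ must be shown to be \emph{cofiltered}, not filtered: one needs a common object mapping \emph{to} any given pair. Second, the argument does not use condition~(W); that hypothesis enters only later in the proof of Proposition~\ref{prop:locThmB}. The paper instead observes that $\iota_V/(M,u)$ is a preorder (this follows from Lemma~\ref{lem:preorder}, since $Qs_{V/}$ and hence its full subcategory $\F_V$ sit over the preorder $Q\C_{/M}$), exhibits an explicit object to show nonemptiness using the representation from Lemma~\ref{lem:morrep'}, and then, given two objects, constructs a third mapping to both by a diagram chase using mixed pullbacks and the compatibility of the two representations under $s$. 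The key structural input is the ACGW axioms (commutative squares, $\oslash$, and closure properties of $\M_\A$ and $\E_\A$ from Lemmas~\ref{lem:1of3} and~\ref{lem:presMA}), not the well-representedness or negligibility hypotheses.
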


\begin{proof}
  By \cite[Theorem A]{quillen}, it suffices to check that for all
  $(M,u) \in Qs_{V/}$, the category $\iota_V/(M,u)$ is contractible for all
  $(M,u)$.  By the dual of \cite[Proposition 3, Corollary 2]{quillen} it
  suffices to check that it is a cofiltered category.  By
  Lemma~\ref{lem:morrep'}, $u$ can be represented by a diagram
  \[V \mDlto^{u_{mD}} V' \eDlto^{u_{eD}} X \erto^{u_e} Y \mrto^{u_m} sM.\]
  
  An object of $\iota_V/(M,u)$ is a triple $(u', M', f)$ of an isomorphism
  $u':V \rto sM'$ in $\F_V$ together with a morphism $f:M' \rto M$ in $Q\C$ such
  that $s(f)u' = u$.  A morphism $(u', M', f) \rto (u'', M'', f')$ is a morphism
  $g:M' \rto M''$ in $Q\C$ such that $f'g= f$.  In particular, there is a
  faithful forgetful functor to $Q\C_{/M}$; since by Lemma~\ref{lem:preorder}
  this is a preorder, so is $\iota_V/(M,u)$.  All it remains to check is that it
  is nonempty and that any two objects have a common object above them.

  To see that $\iota_V/(M,u)$ is nonempty, consider the following diagram in
  $\C$:
  \begin{diagram}
    { V & V' & X & Y & sM \\
      V' \\
      X & & Y \\};
    \mDto{1-2}{1-1}_{u_{mD}} \eDto{1-3}{1-2}_{u_{eD}} \eto{1-3}{1-4}^{u_e}
    \mto{1-4}{1-5}^{u_m}
    \mDto{2-1}{1-1}_{u_{mD}}
    \eDto{3-1}{2-1}_{u_{eD}}
    \eto{3-1}{3-3}^{u_e} \mto{3-3}{1-5}^{u_m}
    \node (m-6-7) at (m-1-5) {$\phantom{sM}$};
    \diagArrow{densely dashed,->,bend left}{1-1}{6-7}!u
    \diagArrow{densely dashed,->,bend right}{1-1}{3-1}!{u'}
    \diagArrow{densely dashed,->}{3-1}{1-5}!f
  \end{diagram}
  This represents an object of $\iota_V/(M,u)$ as desired.

  Now suppose that we are given two different objects of $\iota_V/(M,u)$; we
  want to show that there is an object mapping to both of them.  Suppose that
  the two objects are given by $(u':V \rto sM', f:M' \rto M)$ and $(u'': V \rto
  sM'', f':M'' \rto M)$.  Writing these in terms of their representations we get
  the outside of the following diagram; it is possible to complete the outside
  to the diagam on the inside because $s(f)u' =
  s(f')u''$.  
  \begin{diagram}
    { V & \phantom{Z} & X' & Y' & sM' \\
      \phantom{Z} & W & T & \phantom{Z} & sZ\\
      X'' & T' & A & B \\
      Y'' & \phantom{Z} & \phantom{Z} & \phantom{Z}\\
      sM'' & sZ' & && sM \\};
    \mDto{1-2}{1-1} \eDto{1-3}{1-2} \eDto{1-3}{1-4} \mDto{1-4}{1-5}
    \mDto{2-1}{1-1} \eDto{3-1}{2-1} \eDto{3-1}{4-1} \mDto{4-1}{5-1}
    \eto{5-1}{5-2} \mto{5-2}{5-5}
    \eto{1-5}{2-5} \mto{2-5}{5-5}
    \node (m-100-1) at (m-1-1) {$\phantom{V}$};
    \node (m-100-2) at (m-5-1) {$\phantom{sM''}$};
    \diagArrow{densely dashed,->,bend left}{100-1}{1-5}!{u'}
    \diagArrow{densely dashed,->,bend right}{1-1}{5-1}!{u''}
    \diagArrow{densely dashed,->,bend right}{100-2}{5-5}!{s(f')}
    \diagArrow{densely dashed,->,bend left}{1-5}{5-5}!{s(f)}
    \eto{1-4}{2-4} \mDto{2-4}{2-5} \dist{2-4}{1-5}
    \eto{4-1}{4-2} \mDto{4-2}{5-2} \dist{5-1}{4-2}
    \mDto{2-2}{2-1} \mDto{2-2}{1-2} \mDto{2-3}{1-3}
    \mDto{3-2}{3-1} \eDto{3-2}{2-2} \eDto{2-3}{2-2}
    \comm{1-2}{2-3} \comm{2-1}{3-2}
    \eDto{3-3}{3-2} \eDto{3-3}{2-3}
    \eto{3-1}{4-2} \eto{1-3}{2-4}
    \mDto{4-3}{4-2} \eto{3-2}{4-3} \dist{3-2}{4-2}
    \mDto{3-4}{2-4} \eto{2-3}{3-4} \dist{2-3}{2-4}
    \eto{3-3}{3-4} \eto{3-3}{4-3} \mto{3-4}{5-5} \mto{4-3}{5-5}
    \diagArrow{densely dotted,->}{3-4}{4-3}^\cong
    \diagArrow{densely dotted,->}{3-4}{4-3}_\exists
  \end{diagram}
  Consider the object represented by
  \[\makeshort{(A \eDrto W \mDrto V, A \erto B \mrto M)}.\]
  This is a well-defined morphism of $\iota_V/(M,u)$.  This comes with a
  morphism to $(u',f)$ given by the formal composition
  \[A \eDrto T \mDrto X' \eDrto Y' \mDrto M'\]
  and an analogous morphism to $(u'',f')$.  Thus $\iota_V/(M,u)$ is cofiltered,
  as desired.
\end{proof}

We now turn our attention to Proposition~\ref{prop:locThmB}; this proof is quite
complicated and will take the rest of this section.  In order to prove that
$u^*$ is a homotopy equivalence for all $u$ it suffices to show that it is true
for the morphisms $\initial \erto V$ and $\initial \mrto V$.  Since all of the
conditions of the theorem are symmetric in m-morphisms and e-morphisms, it
suffices to prove this for $\initial \mrto V$; we focus on this case for the
rest of this proof.  The key idea of the proof is to construct a category
$\sH_N$ and functors $P_{(N,\phi)}:\sH_N \rto \F_V$ and $k_N:\sH_N \rto Q\A$
such that the diagram
\begin{diagram-numbered}[4em]{diag:loc}
  { \sH_N & \F_V & Qs_{V/} \\
    Q\A & \F_\initial & Qs_{\initial/} \\};
  \to{1-1}{1-2}^{P_{(N,\phi)}} \cofib{1-2}{1-3}
  \to{2-1}{2-2}^{\cong} \cofib{2-2}{2-3}
  \to{1-1}{2-1}_{k_N} \to{1-3}{2-3}^{u^*}
\end{diagram-numbered}
commutes up to homotopy.  We will then show that $k_N$ and $P_{(N,\phi)}$ are
both homotopy equivalence.  From this Proposition~\ref{prop:locThmB} follows by
2-of-3 and Proposition~\ref{prop:10.7}.

We thus turn our attention to constructing $\sH_N$, $k_N$ and $P_{(N,\phi)}$.

\begin{definition} \label{def:HN}
  The category $\sH_N$ has as objects equivalence classes of diagrams
  \[M \eDlto^{h_e} X \mDrto^{h_m} N,\] where two diagrams are allowed to differ
  by an isomorphic choice of $X$. A morphism
  \[(M \eDlto^{h_e} X \mDrto^{h_m} N) \rto (M' \eDlto^{h'_e} X' \mDrto^{h'_m} N)\]
  is a diagram $M \erto^j M_1 \mrto^i M'$ such that there exists a map
  $\tilde h_m:X \mrto X'$ such that 
  the triangle on the left commutes and the square on the right
  \[
    \begin{inline-diagram}
      { X && X' \\
        & N \\};
      \mto{1-1}{1-3}^{\tilde h_m} \mto{1-1}{2-2}_{h_m} \mto{1-3}{2-2}^{h_m'}
    \end{inline-diagram}
    \qquad
    \csq{X}{X'}{M_1}{M'}{\tilde h_m}{jh_e}{h_e'}{i}
  \]
  is a pseudo-commutative square.  Composition works via composition in $Q\C$; using
  the following diagram we see that it is well-defined:
  \begin{diagram}
    { X & X' & X'' \\
      M_1 & M' \\
      \dotp & M_2 & M'' \\};
    \mto{1-1}{1-2}^{\tilde h_m} \mto{1-2}{1-3}^{\tilde h_m'}
    \mto{2-1}{2-2}^{i} \mto{3-1}{3-2} \mto{3-2}{3-3}^{i'}
    \eto{1-1}{2-1}_{jh_e} \eto{1-2}{2-2}^{h_e'} \eto{1-3}{3-3}^{h''_e}
    \eto{2-1}{3-1} \eto{2-2}{3-2}^{j'} 
    \comm{1-1}{2-2} \comm{1-2}{3-3} \dist{2-1}{3-2}
  \end{diagram}

  The functor $k_N: \sH_N \rto Q\A$ takes $M \eDlto^{h_e} X \mDrto N$ to
  $X^{k/h_e}$. A morphism is taken to the representation
  \[X^{k/h_e} \erto X^{k/jh_e} \mrto^{\tilde h_m} X',\]
  where the first map is obtained by applying $c^{-1}$.
\end{definition}

\begin{definition}
  Let $(N,\phi)$ be an object of $\I^m_V$ (Definition~\ref{def:IV}).  We define
  $P_{(N,\phi)}: \sH_N \rto \F_V$ by letting it take every object
  $M \eDlto X \mDrto N$ to the composition
  \[ V \inlineArrow{smor, ->}^{\phi^{-1}} sN \mDlto sX \eDrto sM\] in
  $\F_V\subseteq Qs_{V/}$.  As both $\sH_N$ and $\F_V$ have as morphisms
  morphisms of $Q\C$, the functor is defined to take a morphism to the morphism
  represented by the same data.
\end{definition}

\begin{lemma}
  $P_{(N,\phi)}$ is a well-defined functor.
\end{lemma}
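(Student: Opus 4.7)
The plan is to verify four things in turn: that $P_{(N,\phi)}$ sends objects of $\sH_N$ into $\F_V$; that it is independent of the representative $X$ chosen in an equivalence class; that it extends coherently to morphisms; and that it preserves identities and composition.

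For objects, fix $(M \eDlto^{h_e} X \mDrto^{h_m} N) \in \sH_N$. The decorations mean $h_e \in \E_\A$ and $h_m \in \M_\A$, so Lemma~\ref{lem:morrep'} shows that $sh_e$ and $sh_m$ are isomorphisms in $\C\bs\A$. Together with the isomorphism $\phi^{-1}\colon V \rto sN$, the zigzag in the definition therefore represents an isomorphism $u\colon V \rto sM$ in $Q(\C\bs\A)$, so $(M,u) \in \F_V$. Independence of the choice of $X$ is immediate: replacing $X$ by an isomorphic $X'$ with compatible structure maps replaces the zigzag by a canonically equivalent one representing the same $u$.

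For morphisms, a morphism in $\sH_N$ from $(M \eDlto^{h_e} X \mDrto^{h_m} N)$ to $(M' \eDlto^{h_e'} X' \mDrto^{h_m'} N)$ consists of $j\colon M \erto M_1$, $i\colon M_1 \mrto M'$, and an auxiliary $\tilde h_m\colon X \mrto X'$ satisfying the triangle $h_m' \tilde h_m = h_m$ and the prescribed commutative square with sides $\tilde h_m$, $jh_e$, $h_e'$, and $i$. The pair $(j,i)$ already defines a morphism $f\colon M \rto M'$ in $Q\C$, and I propose to send the $\sH_N$-morphism to $f$ regarded as a morphism $(M,u) \rto (M',u')$ in $Qs_{V/}$. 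The nontrivial check is that $s(f)\circ u = u'$ in $Q(\C\bs\A)$. First note that $\tilde h_m \in \M_\A$: since $h_m = h_m' \tilde h_m$ with $h_m, h_m' \in \M_\A$, Lemma~\ref{lem:1of3} gives $\tilde h_m \in \M_\A$, whence $s\tilde h_m$ is an isomorphism in $\C\bs\A$. Applying $s$ to the triangle lets us replace $sh_m$ by $sh_m'\circ s\tilde h_m$, so the zigzag representing $u$ can be re-routed through $sX'$; applying $s$ to the commutative square then lets us push this re-routed zigzag across $s(f)$, represented by $(sj, si)$, and identify the result with the zigzag representing $u'$.

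Preservation of identities is immediate, and preservation of composition follows because the composition of morphisms in $\sH_N$ is, by construction, the $Q\C$-composition (built via Lemma~\ref{lem:axiomC}) of the underlying $(j,i)$-data, which $s$ sends to the corresponding $Q(\C\bs\A)$-composition. The main obstacle is the zigzag chase in the morphisms step: all ingredients are cleanly in hand --- $s\tilde h_m$ an isomorphism, and the triangle and commutative square providing precisely the compatibility data --- but one must carefully unpack what $s(f)\circ u$ means as a composition in $Q(\C\bs\A)$ and track how zigzag representatives transform along $s\tilde h_m$ and $(sj,si)$.
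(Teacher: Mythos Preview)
Your proposal is correct and follows essentially the same route as the paper. Both arguments send a morphism in $\sH_N$ to the underlying $Q\C$-morphism $M \erto M_1 \mrto M'$ and then verify the compatibility $s(f)\circ u = u'$ in $Q(\C\bs\A)$ by using the triangle $h_m' \tilde h_m = h_m$ and the commutative square from Definition~\ref{def:HN}; composition is handled in both cases by noting that $\sH_N$ and $Qs_{V/}$ both compose via $Q\C$. The only differences are emphasis: you spell out why objects land in $\F_V$ and why $\tilde h_m \in \M_\A$ (via Lemma~\ref{lem:1of3}), while the paper leaves these implicit and instead writes down an explicit diagram in $\C$ exhibiting the two representatives
\[N \mDlto X \eDrto M \eDrto M_1 \mDrto M' \qqand N \mDlto X' \eDrto M'\]
as equivalent m-morphisms in $\C\bs\A$, which is precisely the ``zigzag chase'' you describe in words.
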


\begin{proof}
  Checking that $P_{(N,\phi)}$ is well-defined on objects is straightforward
  from the definition.  Suppose that we are given a morphism in $\sH_N$ as
  defined in Definition~\ref{def:HN}.  We must show that this produces a
  well-defined morphism in $\F_V$; from the definition the produced morphism in
  $Qs_{V/}$ is an isomorphism, so it suffices to show that a morphism in $\sH_N$
  gives a well-defined morphism in $Qs_{V/}$.     For this to be true it suffices
  to check that the morphisms represented by
  \[N \mDlto X \eDrto M \eDrto M_1 \mDrto M'\]
  and
  \[N \mDlto X' \eDrto M'\] are equivalent in $Q(\C\bs\A)$.  This is true
  because the are equivalent isomorphisms inside the m-morphisms of $\C\bs\A$
  via the following diagram:
  \begin{diagram}
    { N & X & M_1 & M' \\
      & X' & M' \\};
    \mDto{1-2}{1-1} \eDto{1-2}{1-3} \mDto{1-3}{1-4}
    \mDto{2-2}{1-1} \mDto{1-2}{2-2} \mDto{1-3}{2-3} \eDto{2-2}{2-3}
    \comm{1-2}{2-3}
    \eq{2-3}{1-4}
  \end{diagram}
  where the marked square is pseudo-commutative from the definition of a morphism in
  $\sH_N$.  
  That $P_{(N,\phi)}$ respects composition follows directly from the definition,
  since composition in both $Qs_{V/}$ and $\sH_N$ is defined using composition
  in $Q\C$.
\end{proof}

We begin our analysis by showing that (\ref{diag:loc}) commutes up to homotopy.

\begin{lemma}
  In (\ref{diag:loc}) the composition around the top and the composition around
  the bottom are homotopic.
\end{lemma}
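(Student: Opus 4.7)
The plan is to exhibit a natural transformation between the two composed functors $\sH_N \to Qs_{\initial/}$, since such a transformation realizes to a homotopy between the induced maps on geometric realizations. Unpacking the definitions, the top composition sends an object $(M \eDlto X \mDrto N)$ with structure maps $h_e: X \erto M$ and $h_m: X \mrto N$ to the pair $(M, u_2)$, where $u_2$ is the composite in $Q(\C\bs\A)$ of $u: \initial \to V$ with the morphism $V \to sM$ produced by $P_{(N,\phi)}$. The bottom composition sends the same object to $(X^{k/h_e}, u_1)$, where $u_1: \initial \to sX^{k/h_e}$ is the canonical morphism.

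For the component of the natural transformation at $(M \eDlto X \mDrto N)$, I use the m-morphism $(h_e)^k: X^{k/h_e} \mrto M$ produced by axiom (K), regarded as a morphism in $Q\C$. Its accompanying distinguished square is
\[\dsq{\initial}{X}{X^{k/h_e}}{M}{}{}{h_e}{(h_e)^k}.\]
To see that this defines a morphism $(X^{k/h_e}, u_1) \to (M, u_2)$ in $Qs_{\initial/}$, one verifies that $s((h_e)^k) \circ u_1 = u_2$ in $Q(\C\bs\A)$. This is the crux of the argument: since $h_e$ lies in $\E_\A$ the kernel $X^{k/h_e}$ lies in $\A$, so $s((h_e)^k)$ is the image under $s$ of a morphism whose complement is in $\A$, and both $u_1$ and $u_2$ then present a morphism $\initial \to sM$ that can be shown to agree under the equivalence relation defining morphisms in $\C\bs\A$. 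The witnessing intermediate diagram is built from the distinguished square above together with the factorization of $u_2$ through $sX$ provided by $(N, \phi)$.

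Naturality is verified against a morphism $(M \eDlto X \mDrto N) \to (M' \eDlto X' \mDrto N)$ of $\sH_N$, represented by $M \erto M_1 \mrto M'$ together with compatible data $\tilde h_m: X \mrto X'$. The naturality square in $Qs_{\initial/}$ commutes by the same sort of diagram chase that established that $k_N$ is a functor on $\sH_N$ (see Definition~\ref{def:HN}), now applied after $s$ and combined with the fact that $s$ carries distinguished squares in $\C$ to commutative squares in $\C\bs\A$.

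The main obstacle is the triangle-commutativity verification in the second step. One might hope for uniqueness of morphisms out of $\initial$ in $Q(\C\bs\A)$, but because morphisms in $\C\bs\A$ are defined as equivalence classes of long zigzags this is not automatic, and one must produce an explicit witness of equivalence rather than appealing to uniqueness. Once that check is carried out, the remaining naturality verification and the resulting homotopy between the two realized functors follow formally from having a natural transformation of functors into $Qs_{\initial/}$.
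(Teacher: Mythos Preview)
Your approach coincides with the paper's: both exhibit a natural transformation with component $(h_e)^k : X^{k/h_e} \mrto M$, and the paper verifies naturality via the diagram
\begin{diagram}
  { X^{k/h_e} & X^{k/jh_e} & (X')^{k/h_e'} \\
    M & M_1 & M' \\};
  \eto{1-1}{1-2} \eto{2-1}{2-2} \mto{1-2}{1-3} \mto{2-2}{2-3}
  \mto{1-1}{2-1} \mto{1-2}{2-2} \mto{1-3}{2-3}
\end{diagram}
where the left square is distinguished (definition of $k$) and the right square is $k$ applied to the commutative square required in Definition~\ref{def:HN}.  Your gesture toward ``the same sort of diagram chase that established that $k_N$ is a functor'' is pointing at exactly this, though the paper is explicit.

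Where you diverge is the triangle identity $s((h_e)^k)\circ u_1 = u_2$, which you flag as ``the main obstacle'' and propose to resolve by an explicit witness.  This is unnecessary.  You are right that morphisms $\initial \to sM$ in $Q(\C\bs\A)$ are not unique, but both sides here are \emph{m-morphisms} with trivial e-component: $u_1$ is $s(\initial \mrto X^{k/h_e})$ and $s((h_e)^k)$ is an m-morphism, so their composite is the m-morphism $s(\initial \mrto M)$; on the other side, $u = (\initial \msrto V)$ is an m-morphism and $\phi^{-1}$ together with $sN \mDlto sX \eDrto sM$ are m-isomorphisms in $\C\bs\A$, so $u_2$ is also an m-morphism $\initial \msrto sM$.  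Hypothesis~(CGW) says $\C\bs\A$ is a CGW-category, whence Axiom~(Z) makes $\initial$ initial in $\M_{\C\bs\A}$ and forces the two to agree.  The paper compresses all of this into the single line ``this is equivalent to the representation $\initial \mrto sM$'' and moves on.
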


\begin{proof} 
  Consider an object $M \eDlto^{h_e} X \mDrto N$ in $\sH_N$.  Under the composition
  around the top it is mapped to
  \[\initial \mrto V \inlineArrow{smor,->}^{\phi^{-1}} sN \mDlto sX \eDrto sM;\]
  this is equivalent to the representation
  \[\initial \mrto sM.\] Around the bottom this is mapped to
  $\initial \mrto X^{k/h_e}$.  There is a natural map $h_e^k: X^{k/h_e} \mrto M$
  which induces a morphism between these in $Qs_{\initial/}$, so we just need to
  check that this gives a natural transformation.  To see that this
  transformation is natural, suppose that we are given a morphism
  \[\makeshort{(M \eDlto^{h_e} X \mDrto^{h_m} N)} \rto \makeshort{(M'
      \eDlto^{h_e'} X' \mDrto^{h'_m} N)}\] represented by
  $M \erto^j M_1 \mrto^i M'$.  Consider the following diagram in $\C$:
  \begin{diagram}
    { X^{k/h_e} & X^{k/jh_e} & (X')^{k/h_e'} \\
      M & M_1 & M' \\};
    \eto{1-1}{1-2} \eto{2-1}{2-2} \mto{1-2}{1-3} \mto{2-2}{2-3}
    \mto{1-1}{2-1}_{h_e^k} \mto{1-2}{2-2}^{(jh_e)^k} \mto{1-3}{2-3}^{{h'}_e^k}
    \dist{1-1}{2-2}
  \end{diagram}
  The left-hand square exists and is distinguished by the definition of $k$.
  The right-hand square exists and commutes by the condition on morphisms in
  $\sH_N$; this is exactly $k$ applied to the pseudo-commutative square.  After
  applying $s$ to the diagram and considering the outer corners as objects under
  $\initial$, we see that this diagram exactly corresponds to a naturality
  square for functors $\sH_N \rto Qs_{\initial/}$, as desired.
\end{proof}

It remains to show that $k_N$ and $P_{(N,\phi)}$ are homotopy equivalences.  We
begin with $k_N$; however, before we can prove that $k_N$ is a homotopy
equivalence we must develop some theory.

\begin{definition}
  Let $\J_N$ be a skeleton of the full subcategory of $\M_{/N}$ containing those
  morphisms $A \mrto N$ such that $A^c \in \A$.  The category $\J_N$ has a
  terminal object: $1_N$.
\end{definition}

\begin{definition}
  Let $\sH'_N$ be the full subcategory of $\sH_N$ containing those objects where
  $h_m$ is an isomorphism; in particular, each object in $\sH_N'$ can be
  uniquely represented by an e-morphism $N \erto X$.  For any m-morphism
  $i: M \mrto N$ we define the functor $\rho_i:\sH'_N \rto \sH'_M$ by sending
  the e-morphism $N \erto^f X$ to the e-morphism $M \erto \tilde X$, where
  $M \erto \tilde X$ is determined by the following distinguished square:
  \[\dsq MN{\tilde X}Xi{}f{}.\]
  Given a morphism represented by $X \erto X_1 \mrto X'$ in $\sH'_N$, this is
  mapped to the morphism represented by $\tilde X \erto \tilde X_1 \mrto \tilde
  X'$, where $\tilde X_1$ is defined by the distinguished square
  \[\dsq {\tilde X}{X}{\tilde X_1}{X_1}{}{}{}{}.\]
\end{definition}

\begin{lemma} \label{lem:k'andrho}
  Let $i:(J \mDrto N) \rto (I\mDrto N)$ be a morphism in $\J_N$.  Then the
  diagram
  \begin{diagram}
    { \sH'_I & & \sH'_J \\
      & Q\A \\};
    \to{1-1}{1-3}^{\rho_i} \to{1-1}{2-2}_{k'_I} \to{1-3}{2-2}^{k'_J}
  \end{diagram}
  commutes up to natural isomorphism.
\end{lemma}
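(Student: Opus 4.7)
The strategy is to produce the natural isomorphism $\alpha_Z \colon k'_I(Z) \xrightarrow{\cong} k'_J(\rho_i(Z))$ directly from the fact that distinguished squares are equifibered (Axiom (K)), and then deduce naturality by assembling $\rho_i$ on morphisms out of pasted distinguished squares.

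On objects: for $Z = (I \erto^f X) \in \sH'_I$, the functor $\rho_i$ is defined by the distinguished square
\[\dsq JI{X'}X{i}{f_J}f{i'},\]
so that $\rho_i(Z) = (J \erto^{f_J} X')$. By Axiom (K), the equivalence $k \colon \Ar_\square\E \rto \Ar_\triangle \M$ sends this distinguished square to a morphism in $\Ar_\triangle \M$, whose top edge is a canonical isomorphism $\alpha_Z \colon I^{k/f} \rto J^{k/f_J}$. Since $I^{k/f} \in \A$ because $Z \in \sH'_I$, this isomorphism lies in $\A$ and hence defines the component of $\alpha$ at $Z$ as an isomorphism in $Q\A$.

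For naturality I would first make the action of $\rho_i$ on morphisms explicit. A morphism $\phi \colon Z \rto Z'$ in $\sH'_I$ is represented by a zigzag $X \erto^j M_1 \mrto^\iota X''$ with $\iota j f = f'$. Pulling this zigzag back along the m-morphism $i' \colon X' \mrto X$ (equivalently, applying Lemma~\ref{lem:axiomC} to slide $i'$ past $j$ and pasting with the defining square of $\rho_i(Z')$) produces a zigzag $X' \erto^{j'} M'_1 \mrto^{\iota'} X'''$ representing $\rho_i(\phi)$. The collection of distinguished squares involved assembles into a cube whose horizontal faces are the defining squares of $\rho_i$ on $Z$ and $Z'$, and whose remaining faces are either distinguished (coming from Lemma~\ref{lem:axiomC}) or commutative (coming from the zigzag representing $\phi$). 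Applying $k$ and $c^{-1}$ to this cube in the manner prescribed by the definition of $k_N$ on morphisms yields exactly the naturality square $\alpha_{Z'} \circ k'_I(\phi) = k'_J(\rho_i(\phi)) \circ \alpha_Z$ in $Q\A$.

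The main technical obstacle is not constructing $\alpha$ itself but rather verifying that $\rho_i$ extends to a well-defined functor compatible with the equivalence relation on zigzag representatives; this amounts to a bookkeeping check that horizontal pasting of distinguished squares along $i$ descends through the equivalence relation, and that the resulting operation respects composition. Once that functoriality is in hand, the commutativity of the naturality square is automatic, since both legs are obtained from the same cube of distinguished squares by applying the equivalences of categories $c$ and $k$.
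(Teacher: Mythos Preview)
Your proposal is correct and follows the same line as the paper's proof. The paper's argument is extremely terse: it simply observes that $k'_I(I \erto^h M) = I^{k/h}$ and $k'_J\rho_i(I \erto^h M) = J^{k/h'}$, then says these are naturally isomorphic ``since they are related by a natural distinguished square'' (namely the square defining $\rho_i$). Your construction of $\alpha_Z$ via Axiom~(K) is exactly this observation, and your cube argument for naturality spells out what the paper leaves implicit. Your remark that one must check $\rho_i$ is actually a functor is well taken; the paper only defines $\rho_i$ on objects and relies on the uniqueness of distinguished squares (Lemma~\ref{lem:axiomC}) to make the extension to morphisms automatic.
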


\begin{proof}
  Consider the object $I \eDrto^h M$ in $\sH'_I$.  Its image under $k'_I$ is
  $I^{k/h}$.  For the other composition, we consider the distinguished square
  \[\dsq{J}{I}{M'}{M}{i}{h'}{h}{}.\]
  $h$ is mapped to $h'$, and then to $J^{k/h'}$.  Since $I^{k/h}$ and $J^{k/h'}$
  are the kernels in a natural distinguished square, they are naturally
  isomorphic, as desired.
\end{proof}

Consider the functor $F:\sH_N \rto \J_N$ defined by sending each class
$[M \eDlto X \mDrto N]$ to $X \mDrto N$, assuming that this representative is
chosen so that this morphism is in $\J_N$.  Note that for each class this
representative is unique.

\begin{lemma} \label{lem:Ffiber}
  $\sH_N$ is fibered over $\J_N$.
\end{lemma}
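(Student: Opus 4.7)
The plan is to show that $F$ is a Grothendieck fibration by explicitly constructing cartesian lifts. Given a morphism $i : (J \mDrto N) \to (I \mDrto N)$ in $\J_N$ and an object $\xi = (M \eDlto^{h_e} I \mDrto^{h_m} N)$ of $\sH_N$ with $F(\xi) = (I \mDrto N)$, the first step is to apply Lemma~\ref{lem:axiomC} to the composable pair $J \mrto^{i} I \erto^{h_e} M$, producing the unique distinguished square
\[\dsq{J}{I}{M'}{M}{i}{h_e'}{h_e}{j}.\]
The candidate lift will be $\tilde i : \xi' \to \xi$, where $\xi' = (M' \eDlto^{h_e'} J \mDrto^{h_m i} N)$ and $\tilde i$ is represented by $M' \erto^{1_{M'}} M' \mrto^{j} M$ with compatibility datum $\tilde h_m = i$; the commutative square required by the definition of a morphism in $\sH_N$ is precisely the distinguished square just constructed.

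Before turning to cartesianness, I would verify that $\xi'$ is a bona fide object of $\sH_N$. The m-morphism $h_m i$ agrees with the structural map $J \mDrto N$ of the object $(J \mDrto N) \in \J_N$, so its cokernel lies in $\A$ by hypothesis; and $h_e'$ belongs to $\E_\A$ because distinguished squares are equifibered, forcing $J^{k/h_e'} \cong I^{k/h_e} \in \A$.

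Cartesianness is the heart of the lemma. Suppose $\zeta = (L \eDlto^{g_e} Y \mDrto^{g_m} N) \in \sH_N$ and $\alpha : \zeta \to \xi$ is represented by $L \erto^{a} L_1 \mrto^{b} M$ with compatibility datum $\tilde g_m = r : Y \mrto I$, and suppose $r = i \circ s$ for some $s : Y \mrto J$ in $\J_N$. I would form the pullback $L_2 = L_1 \times_M M'$, which exists by Axiom~(P), producing m-morphisms $p : L_2 \mrto L_1$ and $b' : L_2 \mrto M'$ with $j b' = b p$. Then a diagram chase, using Axiom~(U) to reinterpret the compatibility square of $\alpha$ as the mixed pullback identity $Y \cong L_1 \oslash_M I$ and combining this with the distinguished square defining $M'$ (so that $M$ is recovered from $M'$ and $I$ along $J$, in the sense of Axiom~(S) and Lemma~\ref{lem:(B)}), shows that the factorization $r = is$ forces $p$ to be an isomorphism. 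The desired $\beta : \zeta \to \xi'$ is then represented by $L \erto^{a'} L_2 \mrto^{b'} M'$ with $a' = p^{-1} a$ and compatibility datum $s$; composing with $\tilde i$ in $\sH_N$ recovers $\alpha$ because the $Q\C$-composition with the identity-style representation $M' \erto^{1_{M'}} M' \mrto^{j} M$ uses only the trivial distinguished square. Uniqueness of $\beta$ follows from the universal property of the pullback together with the uniqueness clause of Lemma~\ref{lem:axiomC}.

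The main obstacle will be the diagram chase showing $p$ is an isomorphism. Geometrically this encodes the statement that if $L_1 \cap I \subseteq J$ and $M = M' \cup I$ with $J = M' \cap I$, then $L_1 \subseteq M'$; making this precise in the abstract setting requires combining the mixed-pullback characterization of the compatibility square with the decomposition of $M$ supplied by Axiom~(S), and this is the only point where the full ACGW structure (as opposed to mere CGW structure) is essential.
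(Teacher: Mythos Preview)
Your approach via explicit cartesian lifts is legitimate but differs from the paper's route and makes the verification harder than necessary. The paper instead uses Quillen's characterization: $F$ is prefibered because the inclusion $F^{-1}(i) = \sH'_I \hookrightarrow F_{i/}$ has a right adjoint (sending an object to the $I \erto M'$ produced by completing to a distinguished square, which is exactly your $\xi'$), and it is then fibered because the induced base-change functors $j^*$ compose correctly, which is immediate from the uniqueness clause in Lemma~\ref{lem:axiomC}. This decomposition keeps each step short; in particular the compositionality check, which in your formulation is buried inside the strong cartesianness verification, becomes a one-line consequence of uniqueness of distinguished squares.

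In your direct approach the hard step you flag is genuine, but your proposed attack is off. The identity $Y \cong L_1 \oslash_M I$ is in fact correct: applying $c$ from Axiom~(U) to the commutative square yields a pullback square in $\E$, and since $c$ is an equivalence this forces the upper-left corner to be the mixed pullback. However, your appeal to Axiom~(S) to ``recover $M$ from $M'$ and $I$ along $J$'' does not work: Axiom~(S) concerns pushouts of pullback squares lying entirely in $\M$ or entirely in $\E$, not mixed distinguished squares, and no pushout description of $M$ is available in a general ACGW-category. The factorization $b = jb'$ you need follows instead by staying in $\E$ after applying $c$: the distinguished square gives $J^{c/I} \cong (M')^{c/M}$; the factorization $r = is$ gives, via Lemma~\ref{lem:opensquare}, a map $J^{c/I} \erto Y^{c/I} \cong L_1^{c/M}\times_M I$; projecting to the first factor yields $(M')^{c/M} \erto L_1^{c/M}$ over $M$; and the dual of Lemma~\ref{lem:opensquare} then produces the desired $L_1 \mrto M'$ over $M$. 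Neither Axiom~(S) nor Lemma~\ref{lem:(B)} is needed.
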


\begin{proof}
  For any $i:I\mrto N\in \J_N$, $F^{-1}(i)$, the fiber over $i$, is
  isomorphic to $\sH'_I$. The category $F_{i/}$ has as its objects the solid
  part of the diagram
  \begin{diagram}
    { M' & I & N \\
      M & X \\};
    \mto{1-2}{1-3}^i \mto{1-2}{2-2} \mto{2-2}{1-3}
    \eto{2-2}{2-1}
    \diagArrow{mmor,densely dashed}{1-1}{2-1}
    \diagArrow{emor,densely dashed}{1-2}{1-1}
    \dist{1-1}{2-2}
  \end{diagram}  
  The functor taking such a diagram to $I \erto M'$ is the right adjoint to the
  inclusion $\sH'_I = F^{-1}(i) \rto F_{i/}$.

  Thus $\sH_N$ is prefibered over $\J_N$.  To check that it is fibered it
  suffices to check that this right adjoint is compatible with composition in
  the following sense.  For any $j:(I \mrto^i N) \rto (I' \mrto^{i'} N)$ in
  $\J_N$ we get an induced functor $j^*:F^{-1}(i') \rto F^{-1}(i)$ defined by
  the composition
  \[
    \left(\begin{inline-diagram}
      { M & I' & N \\};
      \eto{1-2}{1-1} \mto{1-2}{1-3}^{i'} 
    \end{inline-diagram}\right)
    \rgoesto
    \left(\begin{inline-diagram}
      { M & I & N \\
        M' & I' &  \\};
      \diagArrow{densely dashed,emor}{1-2}{1-1}
      \diagArrow{densely dashed,mmor}{1-1}{2-1}
      \dist{1-1}{2-2}
      \mto{1-2}{2-2}_j \eto{2-2}{2-1}
      \mto{1-2}{1-3}^i \mto{2-2}{1-3}_{i'}
    \end{inline-diagram}\right)
    \rgoesto
    \left(\begin{inline-diagram}
      { M' & I & N \\};
      \eto{1-2}{1-1} \mto{1-2}{1-3}^i
    \end{inline-diagram}\right).\]
  We must show that for any composable $j$ and $k$, $(kj)^*$ is naturally
  isomorphic to $j^*k^*$.  This is true because completing a formal composition
  to a distinguished square is unique up to unique isomorphism.  As both
  $j^*k^*$ and $(kj)^*$ are obtained by completing a formal composition
  \[M \elto I'' \mlto^k I' \mlto^j I\]
  to a distinguished square, they are naturally isomorphic.
\end{proof}

We are now ready to prove that $k_N$ is a homotopy equivalence.

\begin{lemma} \label{lem:kN}
  $k_N$ is a homotopy equivalence.
\end{lemma}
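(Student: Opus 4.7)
The plan is to reduce the claim that $k_N$ is a homotopy equivalence to two successive reductions using the fibered structure of Lemma~\ref{lem:Ffiber}. First I would show that the inclusion $\iota: \sH'_N \rto \sH_N$ of the fiber over the terminal object $1_N \in \J_N$ is a homotopy equivalence, and then that the restriction $k'_N := k_N \circ \iota: \sH'_N \rto Q\A$ is a homotopy equivalence.

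For the first reduction, I would construct an explicit retraction $L: \sH_N \rto \sH'_N$ together with a natural transformation $\eta: \text{id}_{\sH_N} \Rightarrow \iota L$. Given an object $(M \eDlto^{h_e} X \mDrto^{h_m} N)$, apply Lemma~\ref{lem:axiomC} to the formal composition $M \elto X \mrto N$ to produce the unique distinguished square
$$\dsq{X}{N}{M}{M'}{h_m}{h_e}{h'_e}{i},$$
and set $L(M \eDlto X \mDrto N) := (M' \eDlto^{h'_e} N \mDrto^{1_N} N)$. Equifiberedness of distinguished squares (Axiom (K)) gives $N^{k/h'_e} \cong X^{k/h_e} \in \A$, so $L$ lands in $\sH'_N$. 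The triple $(j, \tilde h_m, i) = (1_M, h_m, i)$ exhibits $(M \eDlto X \mDrto N) \rto \iota L(M \eDlto X \mDrto N)$ in $\sH_N$, producing $\eta$; functoriality and naturality follow from uniqueness-up-to-unique-iso of the distinguished square. When $h_m = 1_N$ the distinguished square forces $M' = M$, $i = 1_M$, so $L \iota = \text{id}_{\sH'_N}$ on the nose. Thus $\iota$ is a homotopy equivalence.

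For the second reduction, I would apply Quillen's Theorem A to $k'_N$: for each $A \in Q\A$, show that the slice $k'_N/A$ is contractible. Objects are pairs consisting of an extension $(N \eDrto^h M)$ with kernel in $\A$ and a morphism $u: N^{k/h} \rto A$ in $Q\A$. Using Axiom (A) to construct the canonical split extension $(N \eDrto N \star A)$ with kernel $A$, and using Axiom (PP) together with the mixed pullback of Axiom (U), exhibit a functor $G: (Q\A)_{/A} \rto k'_N/A$ landing in the full subcategory of split extensions, and show this subcategory is cofinal via compatible natural transformations built from the equifibered squares produced by $k$ and $c$. The compatibility of the $k'_I$ with the base-change functors $\rho_i$ recorded in Lemma~\ref{lem:k'andrho} is what keeps this construction coherent.

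The main obstacle is the second reduction: constructing the contracting structure on $k'_N/A$ requires comparing an arbitrary extension of $N$ by an object of $\A$ with a canonical one via a zig-zag of morphisms in $\sH'_N$, and this is exactly where the full ACGW-structure (Axioms (A), (U), (PP)) is needed, whereas the first reduction is formal from Lemma~\ref{lem:axiomC} alone.
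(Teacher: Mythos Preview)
Your first reduction contains a genuine error. Lemma~\ref{lem:axiomC} does \emph{not} give the completion you claim. Given a diagram $A \mrto^f B \erto^g C$, the lemma fills in the vertex \emph{diagonally opposite to $B$}, the vertex where the two given arrows meet head-to-tail; that is, it produces a pullback-type completion. What you need is the opposite: you are given $X \mrto^{h_m} N$ and $X \erto^{h_e} M$, two arrows emanating from $X$, and you want to fill in the vertex $M'$ opposite $X$. No CGW or ACGW axiom provides such a pushout-type distinguished square. Indeed, in the abelian-category example with $X = 0$ this would amount to choosing an extension of $N$ by $M$, which is neither unique nor canonical; in the variety example one can write down $X \erto M$ and $X \mrto N$ for which no variety $M'$ makes the square distinguished (the putative union $M \cup_X N$ need not have $M$ closed in it). So the retraction $L$ is not defined, and your direct argument for $\sH'_N \simeq \sH_N$ collapses.

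The paper circumvents this precisely by reversing your two reductions. It \emph{first} proves that $k'_N = k_N|_{\sH'_N}$ is a homotopy equivalence by Theorem~A: for each $T \in Q\A$ the slice $k'_N/T$ retracts, via Axiom~(PP), onto the full subcategory of objects whose structure map to $T$ is a pure e-morphism, and that subcategory has the initial object $(N,1_N,\initial \erto T)$. Since this works for every $I \in \J_N$, Lemma~\ref{lem:k'andrho} then shows that every base-change functor $\rho_i$ is a homotopy equivalence; combined with the fibered structure of Lemma~\ref{lem:Ffiber} and the contractibility of $\J_N$, Quillen's Theorem~B gives $\sH'_N \simeq \sH_N$. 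Note how the order matters: the equivalence $\sH'_N \simeq \sH_N$ is deduced \emph{from} the equivalences $k'_I \simeq$, not independently.

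Your sketch of the second reduction is roughly in the right spirit (Theorem~A on slices), but the specific plan you outline --- constructing a split extension $N \star A$ via Axiom~(A) and showing a subcategory of split extensions is cofinal --- is both vague and more complicated than what is actually needed. The paper's retraction uses $M \star_{N^k} Y$ from Axiom~(PP) with $Y$ the middle object of the $Q\A$-morphism, not a universal split object.
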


\begin{proof}
  We begin by checking that $k'_N \defeq k_N|_{\sH'_N}$ is a homotopy
  equivalence.  Let $T$ be an object in $Q\A$; it suffices to check that
  $k'_N/T$ is contractible for all $T$.  An object of $k'_N/T$ is a triple
  $(M,h_e:N\erto M,u:N^k\rto T)$ with $u\in Q\A$.  Let $\C'$ be the full
  subcategory of $k'_N/T$ consisting of those morphisms $u$ which can be
  represented purely by an e-morphism.

  Represent $u$ as $X^k \mrto^i Y \erto^j T$, and consider the following
  diagram:
  \begin{diagram}[5em]
    { N^k & M & N \\
      Y & M\star_{N^k} Y & N \\
      T \\}; \mto{1-1}{1-2}^{h_e^k} \eto{1-3}{1-2}_{h_e} \mto{1-1}{2-1}^i
    \mto{1-2}{2-2} \mto{2-1}{2-2}^{(h_e^k)'} \eto{2-3}{2-2}_{k^{-1}((h_e^k)')}
    \eq{1-3}{2-3} \comm{1-2}{2-3} \eto{2-1}{3-1}^j
    \diagArrow{densely dashed, bend right,->}{1-1}{3-1}_u
  \end{diagram}
  Here, the upper-left square is produced by condition (PP).  We claim that the
  map taking $(M, h_e, u)$ to $(M\star_{N^k} Y, h_e', j)$ is a functor which
  produces a retraction from $k'_N/T$ to $\C'$.  To check that this is
  functorial, consider a morphism in $k'_N/T$.  This is represented by a diagram
  \begin{diagram}
    { T &Y &N^{k/h_e} & M &  \\
      & Y'& N^{k/jh_e} & M_1 & N \\
      & &N^{k/h_e'} &  M' & N\\};
    \mto{1-3}{1-4} \eto{2-5}{1-4}_{h_e}
    \eto{3-5}{3-4}_{h'_e} \mto{2-5}{3-5}^= \eto{2-5}{2-4}
    \comm{2-4}{3-5}
    \eto{1-4}{2-4} \mto{2-4}{3-4}
    \eto{1-3}{2-3} \mto{2-3}{2-4} \dist{1-3}{2-4}
    \mto{2-3}{3-3} \mto{3-3}{3-4}
    \mto{1-3}{1-2}_i \eto{1-2}{1-1}_j
    \mto{3-3}{2-2}^{i'} \eto{2-2}{1-1}^{j'}
    \mto{2-3}{2-2} \eto{1-2}{2-2} \dist{1-2}{2-3}
  \end{diagram}
  where the morphism is considered to go from the object represented by the
  diagram around the top to the object represented by the diagram around the
  bottom.  This diagram produces a map
  $M\star_{N^{k/h_e}} Y \erto M_1 \star_{N^{k/jh_e}} Y' \mrto
  M'\star_{N^{k/h_e'}} Y'$ by the functoriality conditions in (PP).  This is
  compatible with composition by Lemma~\ref{lem:(B)} and the definition of
  morphism composition in $Q\A$.  This functor also comes with a natural
  transformation from the identity produced by the map $M \mrto M\star_{N^k} Y$.
  Thus $k'_N/T$ is homotopy equivalent to $\C'$.  The category $\C'$ has an initial
  object $(N, 1_N, \initial \erto T)$, so it is contractible.  Thus $k'_N/T$ is
  contractible for all $T$, and so $k'_N$ is a homotopy equivalence.


  

  We have now shown that $k'_N$ is a homotopy equivalence.  By 2-of-3, in order
  to show that $k_N$ is a homotopy equivalence it suffices to check that the
  inclusion $\sH'_N \rto \sH_N$ is a homotopy equivalence.

  Since $k_n'$ is a homotopy equivalence, by Lemma~\ref{lem:k'andrho} we see
  that $\rho_i$ is a homotopy equivalence for all $i\in \J_N$.  Thus, since
  $\sH_N$ is fibered over $\J_N$, by \cite[Theorem B, Cor]{quillen}, for all
  $I \mrto N$, $\sH'_I$ is homotopy equivalent to the homotopy fiber of $F$.
  However, since $\J_N$ is contractible it follows that the inclusion
  $\sH'_I \rto \sH_N$ is a homotopy equivalence.  In particular, taking the
  m-morphism to be the identity on $N$ gives the desired result.
\end{proof}

We now turn our attention to $P_{(N,\phi)}$.

We will need two different proofs for this functor, depending on whether $\A$ is
m-negligible or m-well-represented in $\C$.

\begin{lemma}
  If $\A$ is m-negligible in $\C$ then $P_{(N,\phi)}$ is a homotopy equivalence.
\end{lemma}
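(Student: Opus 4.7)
The plan is to apply Quillen's Theorem~A to $P_{(N,\phi)}$: for each $(M,u)\in\F_V$, I would show that the comma category $P_{(N,\phi)}/(M,u)$ has contractible nerve. Following the pattern of Lemma~\ref{lem:preorder} and of the proof of Lemma~\ref{lem:kN}, one first checks that this comma category is a preorder, since a morphism in $\sH_N$ is essentially determined by the underlying m-morphism between middle objects, the e-components being pinned down by the distinguished-square structure together with axiom~(M). It therefore suffices to show that $P_{(N,\phi)}/(M,u)$ is nonempty and that any two of its objects admit a common lower bound.

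For nonemptiness, I would use the iso-form of Lemma~\ref{lem:morrep'} to represent the composite isomorphism $u\circ\phi\colon sN\to sM$ in $Q(\C\bs\A)$ as a zigzag in $\C$ of shape $N\mDlto N'\eDlto Z\erto M'\mrto M$, in which $Z\erto M'$ lies in $\E_\A$ and $M'\mrto M$ lies in $\M_\A$. The essential step is to convert this to an $\sH_N$-object $M\eDlto X\mDrto N$. Using Lemma~\ref{lem:axiomC} one can commute the m-morphism $N'\mrto N$ past the e-morphism $Z\erto N'$ to produce an m-then-e fragment of exactly the shape required by the definition of m-negligibility; applying the hypothesis then yields an object $\widetilde X$ equipped with an m-morphism $\widetilde X\mDrto N$ that factors through the zigzag. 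Combining $\widetilde X$ with the original e-direction toward $M$, and using closure of $\A$ under subobjects, quotients and extensions together with Lemmas~\ref{lem:1of3} and \ref{lem:presMA} to check that the arising kernels and cokernels stay in $\A$, produces an object of $\sH_N$ together with the required morphism in $\F_V$ to $(M,u)$.

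For common lower bounds, given two objects of the comma category, I would form a mixed pullback of their middle objects over $M$ using axiom~(U), then intersect over $N$ using axioms~(P) and~(S); the resulting object again lies in $\sH_N$ by the closure properties of $\M_\A$ and $\E_\A$ under pullbacks. That the two induced comparison morphisms to $(M,u)$ in $\F_V$ coincide would follow from condition~(E) of Theorem~\ref{thm:qLoc} together with the preorder property.

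The main obstacle will be the nonemptiness step. A single application of m-negligibility only collapses one m-then-e fragment, so one has to iterate the construction, interleaved with the distinguished-square commutations of Lemma~\ref{lem:axiomC}, and carefully verify at each stage that all new kernels and cokernels remain in $\A$. This is where the hypothesis that $\A$ is closed under subobjects, quotients, and extensions plays the decisive role.
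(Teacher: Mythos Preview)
Your overall strategy---apply Theorem~A and show each comma category $P_{(N,\phi)}/(M,u)$ is a nonempty cofiltered preorder---is the same as the paper's. The gap is in how you propose to carry it out.

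The paper does \emph{not} attack the full comma category directly. Instead it first defines the full subcategory $\D$ of those objects where the e-component $X\eDrto M$ of the $\sH_N$-object is the identity, and produces a natural retraction of the comma category onto $\D$ (sending $(M\eDlto X\mDrto N)$ over $A$ to $(X\eDlto X\mDrto N)$ over $A$, using that the comparison map $M\to A$ lies entirely in $\M_\A$ and $\E_\A$). This reduction is the key move: objects of $\D$ are simply diagrams $N\mDlto M\mrto M'\erto A$, which is a far more tractable shape. Nonemptiness and cofilteredness of $\D$ then both follow from a single clean application of m-negligibility (for cofilteredness: given two such diagrams equivalent in $\C\bs\A$, m-negligibility says one may replace the zigzag witnessing equivalence by a single $Y\mDrto N$, and then $Y\times_N M$ gives the common lower bound).

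Without this retraction, your plan runs into trouble. Your proposed lower-bound argument---form a mixed pullback of the ``middle objects'' over $M$, then intersect over $N$---does not obviously produce an object of $\sH_N$ together with $\sH_N$-morphisms to both originals, because the maps $M_i\to M$ are morphisms in $Q\C$ (zigzags), not morphisms in $\C$ along which one can pull back. Moreover, invoking condition~(E) here is the wrong tool: condition~(E) concerns when two representatives define the \emph{same} morphism in $\C\bs\A$ (it is a uniqueness/coherence statement), not the existence of common refinements. The paper uses m-negligibility, not~(E), for both the existence and cofilteredness steps.

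Finally, your preorder justification is vague; the correct reason is that morphisms in the comma category are in particular morphisms in $Q\C_{/M}$, and $Q\C_{/M}$ is a preorder by Lemma~\ref{lem:preorder}. The paper only states this for the subcategory $\D$, where it is immediate from monicity in~$\M$.
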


\begin{proof}
  We prove this using Theorem A.  An object of $\F_V$ is an isomorphism
  $V \srto^\psi sA$.  We will show that $(P_{(N,\phi)})_{A/}$ is contractible.
  We can fix representatives for $\phi$ and $\psi$ such that an object of
  $(P_{(N,\phi)})_{/A}$ is represented by a diagram
  \begin{diagram-numbered}{diag:obj}
    { V & V' & Z & N' & N\\
      && A' && X\\
      && A & M' & M \\};
    \eDto{1-2}{1-1} \mDto{1-3}{1-2} \mDto{1-3}{1-4} \eDto{1-4}{1-5}
    \mDto{1-3}{2-3} \eDto{2-3}{3-3}
    \mDto{2-5}{1-5} \eDto{2-5}{3-5}
    \mto{3-5}{3-4} \eto{3-4}{3-3}
    \node (m-100-100) at (m-1-1) {\phantom{$V$}};
    \node (m-99-99) at (m-3-5) {\phantom{$V$}};
    \diagArrow{densely dashed, ->, bend left}{100-100}{1-5}^{\phi^{-1}}
    \diagArrow{densely dashed, ->, bend left}{1-1}{3-3}_\psi
    \diagArrow{densely dashed,->,bend left}{1-5}{3-5}
    \diagArrow{densely dashed,->,bend left}{99-99}{3-3}
  \end{diagram-numbered}
  where the dashed arrows commute inside $Q(\C\bs\A)$.  $V', Z, N', A'$ are all
  fixed by our choice of representatives; the only part of the diagram that is
  allowed to change are the bottom and rightmost rows.  A representative of an
  object is well-defined up to unique isomorphism, since both the right-hand
  column (an object in $\sH_N$) and the bottom row are well-defined up to unique
  isomorphism. The maps $M \mrto M'$ and $M' \erto A$ must also be in $\M_\A$
  and $\E_\A$, respectively, since $\M_\A$ and $\E_\A$ are closed under 2-of-3
  by Lemma~\ref{lem:1of3}. (This follows by computing a representative of the
  composition and noting that since its components are in $\M_\A$
  (resp. $\E_\A$) the two maps across the bottom are.) 

  A morphism $(M/A) \rto (M'/A)$ of $(P_{(N,\phi)})_{/A}$ is a
  diagram
  \begin{diagram}
    { V & V' & Z & N' & N && N \\
      && A' && X &&  \hat X\\
      && A & M' & M & M_1 & \hat M\\};
    \eDto{1-2}{1-1} \mDto{1-3}{1-2} \mDto{1-3}{1-4} \eDto{1-4}{1-5}
    \mDto{1-3}{2-3} \eDto{2-3}{3-3}
    \mDto{2-5}{1-5} \eDto{2-5}{3-5}
    \mto{3-5}{3-4} \eto{3-4}{3-3}
    \node (m-100-100) at (m-1-1) {\phantom{$V$}};
    \node (m-99-99) at (m-3-5) {\phantom{$V$}};
    \eq{1-5}{1-7}
    \mDto{2-7}{1-7} \eDto{2-7}{3-7}
    \eDto{3-7}{3-6} 
    \mDto{3-6}{3-5} \mDto{2-7}{2-5}
  \end{diagram}
  where the morphism $\hat M \rto A$ in $Q\C$ is given by the composition across the
  bottom.  

  Let $\D$ be the full subcategory of $(P_{(N,\phi)})_{/A}$ of those objects
  which can be represented by a diagram where the morphism $X \eDrto M$ is the
  identity.  Given any object represented by (\ref{diag:obj}) there is a
  well-defined morphism given by
  \begin{diagram}
    { V & V' & Z & N' & N && N \\
      && A' && X &&  X\\
      && A & M' & M & M & X \\};
    \eDto{1-2}{1-1} \mDto{1-3}{1-2} \mDto{1-3}{1-4} \eDto{1-4}{1-5}
    \mDto{1-3}{2-3} \eDto{2-3}{3-3}
    \mDto{2-5}{1-5} \eDto{2-5}{3-5}
    \mDto{3-5}{3-4} \eDto{3-4}{3-3}
    \node (m-100-100) at (m-1-1) {\phantom{$V$}};
    \node (m-99-99) at (m-3-5) {\phantom{$V$}};
    \eq{1-5}{1-7}
    \mDto{2-7}{1-7} \eq{2-7}{3-7}
    \eDto{3-7}{3-6} 
    \eq{3-6}{3-5} \eq{2-7}{2-5}
  \end{diagram}
  which is natural in our object (since the choice of $X$ is unique up to unique
  isomorphism).  This show that $\D$ is a retractive subcategory of
  $(P_{(N,\phi)})_{/A}$, and is thus homotopy equivalent to it.  

  A morphism inside $\D$ is represented by a diagram
  \begin{diagram}
    { V & V' & Z & N' & N & N \\
      & A' & A & M' & M & \hat M \\};
    \eDto{1-2}{1-1} \mDto{1-3}{1-2} \mDto{1-3}{1-4} \eDto{1-4}{1-5}
    \eq{1-5}{1-6}
    \mDto{1-3}{2-2} \eDto{2-2}{2-3} \eDto{2-4}{2-3} \mDto{2-5}{2-4}
    \mDto{2-6}{2-5}
    \mDto{2-6}{1-6} \mDto{2-5}{1-5}
  \end{diagram}
  The only important information here is the lower-right-hand side.  Thus we
  will think of morphisms in $\D$ as represented by diagrams
  \[N \mDlto M \mrto M' \erto A\] which are equivalent inside $\C\bs\A$.  Since
  all morphisms in $\M$ are monic, such a morphism (if it exists) is unique;
  thus $\D$ is a preorder.  To show that $\D$ is contractible we will show that
  it is nonempty and cofiltered.

  Given two objects
  \[N \mDlto M \mrto M' \erto A \qqand N \mDlto \tilde M \mrto \tilde M' \erto
    A\]
  we know that they are equivalent inside $\C\bs\A$ if there exists a diagram $X
  \eDrto Y \mDrto N$ such that precomposition by this diagram sends these to
  diagrams which are equivalent in $\C$.  However, since $\A$ is m-negligible in
  $\C$ we see that such a diagram exists if and only if such a diagram exists
  with the e-component equal to the identity.  Picking such a morphism $Y \mDrto
  N$ we see that the object represented by
  \[N \mDlto Y\times_N M \mrto M' \erto A\]
  maps to both of these objects.  Thus $\D$ is cofiltered.

  To see that $\D$ is nonempty, consdier the diagram
  \[Z \mDrto N' \eDrto N\]
  given by the chosen representative for $\phi$.  Since $\A$ is m-negligible in
  $\C$ there exists an m-morphism $M \mDrto N$ such that $M\oslash_N N' \cong M$
  and $M \mDrto N'$ factors through $Z \mDrto N'$.  Then the diagram
  \[N \mDlto M \mDrto A' \eDrto A\]
  gives a well-defined object of $\D$.  Thus $\D$ is nonempty and cofiltered,
  and therefore contractible.

\end{proof}

If $\A$ is m-negligible in $\C$ we are now done.  Thus we can now assume that
$\A$ is m-well-represented in $\C$.

Consider a diagram $N \eDlto^{g_e} X \mDrto^{g_m} N'$ which we denote $g$.  We
define the functor $g_*:\sH_N \rto \sH_{N'}$ by
\[
  \begin{inline-diagram}
    { M & Y & N \\};
    \eDto{1-2}{1-1} \mDto{1-2}{1-3}
  \end{inline-diagram}
  \rgoesto
  \begin{inline-diagram}
    {  & Y\oslash_N X & X & N' \\
      M & Y & N \\};
    \eDto{2-2}{2-1} \mDto{2-2}{2-3} \eDto{1-3}{2-3}^{g_e} \mDto{1-3}{1-4}^{g_m}
    \eDto{1-2}{2-2}
    \mDto{1-2}{1-3}
    \comm{1-2}{2-3}
  \end{inline-diagram}
  \rgoesto
  \begin{inline-diagram}
    { M & Y\oslash_N X & N' \\};
    \eDto{1-2}{1-1} \mDto{1-2}{1-3}
  \end{inline-diagram}.
\]
This is functorial because pseudo-commutative squares compose.

\begin{lemma}
  There is a natural transformation $k_N \Rto k_{N'}g_*$.
\end{lemma}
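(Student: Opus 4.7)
The plan is to construct the component of the natural transformation at each object of $\sH_N$ as a pure m-morphism in $Q\A$ using the dual of Lemma~\ref{lem:opensquare}, and then verify naturality via the uniqueness clauses in Axioms~(K) and (U) together with Lemmma~\ref{lem:axiomC}. Write $g = (N \eDlto^{g_e} X \mDrto^{g_m} N')$. For $h = (M \eDlto^{h_e} Y \mDrto^{h_m} N) \in \sH_N$, by definition $k_N(h) = Y^{k/h_e}$. On the other side, $g_*(h) = (M \eDlto Y\oslash_N X \mDrto N')$, and its new e-morphism $\bar h_e \colon Y\oslash_N X \eDrto M$ is the composite of the mixed-pullback projection $p_Y \colon Y\oslash_N X \eDrto Y$ with $h_e$; hence $k_{N'}g_*(h) = (Y\oslash_N X)^{k/\bar h_e}$. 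The dual of Lemma~\ref{lem:opensquare}, applied to the composition $Y\oslash_N X \eDrto^{p_Y} Y \eDrto^{h_e} M$, supplies a canonical m-morphism
\[\iota_h \colon Y^{k/h_e} \mrto (Y\oslash_N X)^{k/\bar h_e}\]
fitting into a commuting triangle over $M$. I take the component of the natural transformation at $h$ to be the morphism in $Q\A$ represented by $Y^{k/h_e} \erto^{=} Y^{k/h_e} \mrto^{\iota_h} (Y\oslash_N X)^{k/\bar h_e}$.

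I then verify this is a well-defined morphism in $Q\A$: since $h_e \in \E_\A$, we have $Y^{k/h_e} \in \A$; Lemma~\ref{lem:presMA}(a) gives $p_Y \in \E_\A$, and then Lemma~\ref{lem:1of3} gives $\bar h_e \in \E_\A$, so $(Y\oslash_N X)^{k/\bar h_e} \in \A$; closure of $\A$ under subobjects makes $\iota_h$ a morphism in the m-category of $\A$.

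The remaining work is naturality. Given a morphism $h \to h'$ in $\sH_N$ with data $M \erto^j M_1 \mrto^i M'$ and compatible $\tilde h_m \colon Y \mrto Y'$, I first build the image under $g_*$ by applying the functoriality of the mixed pullback (Axiom~(U)) together with Lemma~\ref{lem:(B)} to produce an induced m-morphism $Y\oslash_N X \mrto Y'\oslash_N X$ covering $\tilde h_m$ and compatible with the projections. Naturality reduces to the equality, in $Q\A$, of two morphisms $Y^{k/h_e} \rto (Y'\oslash_N X)^{k/\bar h'_e}$, each obtained by applying $k$ to an appropriate composite of e-morphisms built from $jh_e$, $p_Y$, $p_{Y'}$ and $\tilde h_m$. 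The main obstacle will be the bookkeeping to show that these two representatives actually coincide in $Q\A$. My approach is to exhibit both composites as the unique m-morphism $Y^{k/h_e} \mrto (Y'\oslash_N X)^{k/\bar h'_e}$ factoring through a common commuting triangle over $M'$; this uniqueness is delivered by Axiom~(K) and the dual of Lemma~\ref{lem:opensquare}, while the uniqueness portion of Lemmma~\ref{lem:axiomC} is used to identify the two a priori distinct builds of the morphism. Once the correct diagram is assembled, the verification is mechanical.
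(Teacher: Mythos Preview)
Your construction of the component $\iota_h$ matches the paper's, and your check that everything lands in $\A$ is more careful than the paper bothers to be. The gap is in the naturality argument. The two composites $Y^{k/h_e} \rto (Y'\oslash_N X)^{k/\bar h'_e}$ in $Q\A$ are \emph{not} pure m-morphisms: recall from Definition~\ref{def:HN} that $k_N$ applied to a morphism of $\sH_N$ is represented by $Y^{k/h_e} \erto Y^{k/jh_e} \mrto (Y')^{k/h'_e}$, and similarly for $k_{N'}g_*$. Each composite therefore carries a nontrivial e-part, so the plan ``exhibit both composites as the unique m-morphism factoring through a triangle over $M'$'' cannot work as stated. (Relatedly, $\tilde h_m$ is an m-morphism, so describing the relevant data as ``a composite of e-morphisms built from $jh_e$, $p_Y$, $p_{Y'}$ and $\tilde h_m$'' is already off.)

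The paper resolves this by inserting the middle column $Y^{k/jh_e} \mrto (Y\oslash_N X)^{k/j\bar h_e}$ and splitting the naturality square as a left square (e-morphisms horizontally, $\iota$'s vertically) glued to a right square lying entirely in $\M$. The right-hand square commutes by the dual of Lemma~\ref{lem:opensquare}. The left-hand square must be shown to be \emph{distinguished}---this is exactly what is needed for the composition in $Q\A$ to commute the e-part past $\iota_h$---and this is proved as a general fact: for any string $A \erto B \erto C \erto D$ in $\E$, the square with corners $B^{k/C}$, $A^{k/C}$, $B^{k/D}$, $A^{k/D}$ is distinguished, directly from the definition of $c$ and $k$. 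Once the e-parts are aligned in this way, your monicity-over-$M'$ idea is essentially what finishes the comparison of the m-parts.
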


\begin{proof}
  We have
  \[\makeshort{k_N(M \eDlto^{h_e} Y \mDrto N)} = Y^{k/M}.\]
  On the other hand,
  \[\makeshort{k_{N'}g_*(M \eDlto^{h_e} Y \mDrto N)} = (Y\oslash_N X)^{k/M}.\]
  The map $Y\oslash_N X \erto M$ factors through $Y \erto M$, so (by
  Lemma~\ref{lem:opensquare}) there is a functorially induced map
  $Y^{k/M} \mrto (Y\oslash_N X)^{k/M}$.  This map gives the natural
  transformation.  To check that this is actually natural, consider a map
  $(M \eDlto Y \mDrto N) \rto (M' \eDlto Y' \mDrto N)$ represented by
  $M \eDrto M_1 \mDrto M'$.  We must show that the square
  \begin{diagram}
    {      Y^{k/M} & Y^{k/M_1} & (Y')^{k/M'} \\
      (Y\oslash_N X)^{k/M} & (Y\oslash_N X)^{k/M_1} & (Y'\oslash_N X)^{k/M'}
      \\};
    \mto{1-1}{2-1} \mto{1-3}{2-3}
    \eto{1-1}{1-2} \mto{1-2}{1-3}
    \eto{2-1}{2-2} \mto{2-2}{2-3}
  \end{diagram}
  commutes in $Q\A$.  To show this it suffices to show that there exists a map
  $Y^{k/M_1} \mrto (Y\oslash_N X)^{k/M_1}$ such that the left-hand square is
  distinguished and the right-hand square commutes.  The map exists and makes
  the right-hand square commute by Lemma~\ref{lem:opensquare}.  To check that
  the left-hand square is distinguished it suffices to check that given any
  diagram
  \[A \erto B \erto C \erto D\]
  the square
  \begin{diagram}
    { B^{k/C} & A^{k/C} \\
      B^{k/D} & A^{k/D} \\};
    \mto{1-1}{1-2} \mto{2-1}{2-2}
    \eto{1-1}{2-1} \eto{1-2}{2-2}
  \end{diagram}
  is distinguished.  This follows directly from the definition of $c$ and $k$.
\end{proof}

Since $k_N$ and $k_{N'}$ are both homotopy equivalences, we get the following
corollary: 

\begin{corollary}
  $g_*$ is a homotopy equivalence.
\end{corollary}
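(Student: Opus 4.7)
The plan is to deduce this from the preceding lemma by a routine two-out-of-three argument at the level of classifying spaces. The previous lemma produces a natural transformation $k_N \Rightarrow k_{N'} \circ g_*$ between functors $\sH_N \rto Q\A$. Upon taking classifying spaces, any natural transformation realizes to a homotopy, so we obtain a homotopy
\[|k_N| \simeq |k_{N'}| \circ |g_*| : |\sH_N| \rto |Q\A|.\]
In particular the map $|k_N|$ factors, up to homotopy, as $|g_*|$ followed by $|k_{N'}|$.

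By Lemma~\ref{lem:kN}, applied to both $N$ and $N'$, the functors $k_N$ and $k_{N'}$ are homotopy equivalences. Hence $|k_N|$ and $|k_{N'}|$ are weak homotopy equivalences of spaces. From the homotopy commutative factorization above, two-out-of-three for weak equivalences then forces $|g_*|$ to be a weak equivalence as well, which is precisely the statement that $g_*$ is a homotopy equivalence of categories.

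There is no real obstacle here: the substantive content has already been carried out, namely the construction of the natural transformation $k_N \Rightarrow k_{N'} \circ g_*$ in the preceding lemma and the proof that $k_N$ is a homotopy equivalence in Lemma~\ref{lem:kN}. The only thing to verify carefully is that the natural transformation constructed in the preceding lemma is genuinely a natural transformation of functors landing in $Q\A$ (not merely a pointwise collection of morphisms), so that it does realize to a homotopy, but this is already part of the content of that lemma.
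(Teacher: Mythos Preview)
Your argument is correct and is exactly the paper's own reasoning: the paper simply observes that since $k_N$ and $k_{N'}$ are homotopy equivalences (Lemma~\ref{lem:kN}) and the preceding lemma supplies a natural transformation $k_N \Rightarrow k_{N'}g_*$, two-out-of-three gives that $g_*$ is a homotopy equivalence.
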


Consider the functor $H:\I^m_V \rto \Cat$ sending $(N,\phi)$ to $\sH_N$ and
$g:(N,\phi) \rto (N,\phi')$ to $g_*$.  

\begin{lemma} \label{lem:Hiso}
  There is an isomorphism of categories
  \[\tilde H: \colim_{\I^m_V} H \rto \F_V\]
  induced by $P_{(N,\phi)}: \sH_N \rto \F_V$.
\end{lemma}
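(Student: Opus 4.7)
The approach is to verify three properties of $\tilde H$: that the family $\{P_{(N,\phi)}\}$ forms a cocone under $H$ (so induces $\tilde H$), and that $\tilde H$ is bijective on objects and on morphisms.

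For the cocone property, given a morphism $g = (N \eDlto^{g_e} X \mDrto^{g_m} N')$ in $\I^m_V$ and $\sigma = (M \eDlto Y \mDrto N) \in \sH_N$, I would compare the two morphisms $V \to sM$ arising from $P_{(N,\phi)}(\sigma)$ and $P_{(N',\phi')}(g_*\sigma)$. The mixed pullback square defining $Y \oslash_N X$ from axiom (U) identifies the $Q$-morphism $sN \mDlto sY \eDrto sM$ with $sN' \mDlto s(Y \oslash_N X) \eDrto sM$ after precomposition with the isomorphism $sg$ in $Q(\C\bs\A)$; combined with the relation $\phi = \phi' \circ sg$ built into the definition of morphisms in $\I^m_V$, this gives the equality of the two compositions in $Qs_{V/}$. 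Functoriality on morphisms of $\sH_N$ is analogous.

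For bijectivity on objects, given $(M, u) \in \F_V$ with $u$ an isomorphism, Lemma~\ref{lem:morrep'} gives a representative of $u$ in $\C$ whose terminal m-morphism is in $\M_\A$ and whose e-component is in $\E_\A$. This repackages as $\sigma = (M \eDlto Y \mDrto N) \in \sH_N$ for a suitable $(N,\phi) \in \I^m_V$, with $P_{(N,\phi)}(\sigma) = (M, u)$; m-well-representedness provides the requisite flexibility in the choice of $N$. Any two such representatives $\sigma \in \sH_N$ and $\sigma' \in \sH_{N'}$ are identified in the colimit by choosing a common upper bound $(N'',\phi'') \in \I^m_V$ (using filteredness), where condition (E) of Theorem~\ref{thm:qLoc} forces $g_*\sigma$ and $g'_*\sigma'$ to become equal in $\sH_{N''}$, possibly after one further enlargement.

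For bijectivity on morphisms, a morphism $(M, u) \to (M', u')$ in $\F_V$ is a $Q\C$-morphism $M \to M'$ intertwining $u$ with $u'$. After moving to a common $\sH_N$ via filteredness so that both source and target are defined there, the compatibility of the $Q\C$-morphism with $u, u'$ yields the auxiliary map $\tilde h_m$ demanded by Definition~\ref{def:HN}. The main obstacle is uniqueness modulo the colimit relation: different lifts of the same morphism of $\F_V$ must become equal after pullback to some higher $(N'', \phi'')$, and this is exactly what condition (E) supplies, together with the filteredness of $\I^m_V$.
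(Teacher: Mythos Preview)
Your overall structure matches the paper's: establish the cocone property, then check bijectivity on objects and on morphisms. The cocone and object-injectivity arguments are essentially the paper's (the latter does use condition~(E), as you say). Two places differ.

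For surjectivity on objects the paper is simpler than your sketch: given $(M,u)\in\F_V$ it takes $(N,\phi)=(M,u^{-1})$ and the identity object $(M\req M\req M)\in\sH_M$; no appeal to Lemma~\ref{lem:morrep'} is needed. For injectivity on morphisms you invoke condition~(E) and filteredness, but this is unnecessary: the paper observes that each $P_{(N,\phi)}$ is already faithful, since a morphism in $\sH_N$ is by definition a $Q\C$-morphism $\tilde g$ satisfying extra constraints, and $P_{(N,\phi)}$ leaves $\tilde g$ untouched.

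The genuine gap is in surjectivity on morphisms. You assert that after moving source and target to a common $\sH_N$, ``the compatibility of the $Q\C$-morphism with $u,u'$ yields the auxiliary map $\tilde h_m$.'' This is the heart of the matter and is not obvious: given arbitrary lifts $(M\eDlto Y\mDrto N)$, $(M'\eDlto Y'\mDrto N)$ and a $Q\C$-morphism $M\to M'$ compatible in $Q(\C\bs\A)$, nothing in the axioms immediately produces a map $Y\mrto Y'$ over $N$ making the required commutative square of Definition~\ref{def:HN}. The paper sidesteps this by choosing the lifts carefully rather than arbitrarily: given $g:(M,u)\to(M',u')$, it represents $g$ in $Q\C$ as $M\eDrto X\mDrto M'$ (both components dotted since $s(g)$ is an isomorphism), completes this to a distinguished square with fourth vertex $X'$, and works in $\sH_{X'}$ with the lifts $(M,\,M\mDrto X')$ and $(M',\,X'\eDrto M')$. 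In that setup the distinguished square itself supplies the commutative square in Definition~\ref{def:HN}. Your filteredness approach would need additional argument to manufacture $\tilde h_m$; as written it is an assertion rather than a proof.
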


\begin{proof}
  We first check that $\tilde H$ is well-defined.
  To prove this it suffices to check that 
  for $g:(N,\phi) \rto (N',\phi')$, 
  \[P_{(N',\phi')} g_* = P_{(N,\phi)}.\] Since morphisms in $\sH_N$ are defined
  to be morphisms in $Q\C$ satisfying extra conditions, and since both
  $P_{(N,\phi)}$ and $g_*$ do not change any of the representation data in the
  morphism, if the two sides agree on objects they must also agree on morphisms.
  $P_{(N,\phi)}$ maps an object $(M \eDlto X \mDrto N)$ to the composition
  \[V \srto^{\phi^{-1}} sN \mDlto sX \eDrto sM,\]
  while $P_{(N',\phi')}g_*$ maps it to the composition
  \[V \srto^{{\phi'}^{-1}} sN' \mDlto sY \eDrto sN \mDlto sX \eDrto sM.\]
  However, since $\phi' s(g) = \phi$, these two compositions represent
  equivalent diagrams (since after being considered inside $\C\bs\A$, all $g_*$
  does is compose with $g$) and thus the left and right sides agree on objects.
  Therefore the functors $P_{(N,\phi)}$ produce a valid cone under $H$ and
  $\tilde H$ is well-defined.

  It now remains to show that it is, in fact, an isomorphism of categories.

  First we show that $\tilde H$ is surjective on objects; in other words, that
  for every $(M,u:V \rto^{\cong} sM)$ in $\F_V$ there exists an $(N,\phi)$ and
  an object $(M',h)$ in $\sH_N$ such that $P_{(N,\phi)}(M',h) = (M,u)$.  To do
  this, let $(N,\phi) = (M, u^{-1})$ and let $(M',h) = (M,M \req M \req M)$..
  Thus $\tilde H$ is surjective on objects.

  Now consider injectivity.  Since $\I^m_V$ is filtered, it suffices to check
  that each individual $P_{(N,\phi)}$ is injective on objects.  Suppose that
  \[P_{(N,\phi)}(M,h) = P_{(N,\phi)}(M',h').\] We must show that there exists
  $g:(N,\phi) \rto (N',\phi')$ in $\I_V^m$ such that $g_*(M,h) = g_*(M',h')$.
  Note, that by definition in order for this to hold we must have $M = M'$ and
  $s(h) = s(h')$.  The fact that such a $g$ exists is implied by condition (E);
  in fact, this $g$ will be represented by a morphism where the m-component is
  the identity.  Thus $\tilde H$ is injective on objects.

  We now consider morphisms.  As before, we consider surjectivity first.
  Consider a morphism $g:(M,u) \rto (M',u')$ in $\F_V$.  This is given by a
  morphism $g:M \rto M'$ in $Q\C$ such that $s(g) u = u'$ in $Q(\C\bs\A)$.  Since
  both $u$ and $u'$ are isomorphisms, $s(g)$ must be as well; thus it is
  represented by a diagram $M \eDrto X \mDrto M'$.  Consider the distinguished
  square
  \begin{diagram}
    { M & X' \\
      X & M \\};
    \mDto{1-1}{1-2}^{h_m} \mDto{2-1}{2-2}^{g_m}
    \eDto{1-1}{2-1}_{g_e} \eDto{1-2}{2-2}^{h_e} \dist{1-1}{2-2}
  \end{diagram}
  where the composition around the bottom is given by the components of $g$.
  Since all distinguished squares are pseudo-commutative, this defines a morphism
  \[\makeshort{(M, M \mDrto^h_m X')} \rto^f \makeshort{(M', M' \eDlto^{h_e} X')}\]
  in $\sH_{X'}$.  Note that $s(g_e) u = s(h_e^{-1}) u'$.  Thus
  $P_{(X,s(g_e)u)}(f) = g$, as desired.

  Now consider injectivity.  As before, it suffices to consider a single
  $P_{(N,\phi)}$ and show that it is faithful.  Suppose that
  $P_{(N,\phi)}(g) = P_{(N,\phi)}(g')$.  By definition, 
  \[g,g': \makeshort{(M \eDlto X \mDrto N)} \rto \makeshort{(M' \eDlto X' \mDrto
      N)}\] are given by morphisms $\tilde g,\tilde g': M \rto M'$ in $Q\C$
  satisfying the diagram in Definition~\ref{def:HN}.  For
  $P_{(N,\phi)}(g) = P_{(N,\phi)}(g')$ we must have $\tilde g = \tilde g'$;
  however, in this case we must have $g$ and $g'$ equal as well.  Thus
  $\tilde H$ is injective on morphisms, and we are done.
\end{proof}

We are now ready to finish:

\begin{proposition}
  If $\A$ is m-well-represented in $\C$ then $P_{(N,\phi)}$ is a homotopy
  equivalence.
\end{proposition}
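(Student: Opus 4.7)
The plan is to combine Lemma~\ref{lem:Hiso} with the preceding corollary. Since $\A$ is m-well-represented in $\C$, the indexing category $\I^m_V$ is filtered, and Lemma~\ref{lem:Hiso} identifies $\F_V$ with the colimit $\mathrm{colim}_{\I^m_V} H$ in such a way that each canonical inclusion $\sH_N \rto \mathrm{colim}_{\I^m_V} H$ is exactly $P_{(N,\phi)}$.

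First I would pass to classifying spaces. The nerve functor and geometric realization both preserve filtered colimits of small categories, so one obtains a homeomorphism $|\F_V| \cong \mathrm{colim}_{\I^m_V} |H|$ under which the canonical inclusion $|\sH_N| \rto |\F_V|$ agrees with $|P_{(N,\phi)}|$. Because $\I^m_V$ is filtered, the natural comparison map $\hocolim_{\I^m_V} |H| \rto \mathrm{colim}_{\I^m_V} |H|$ is a weak homotopy equivalence; this is the standard fact that filtered colimits of spaces already compute their homotopy colimits.

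By the corollary above, every transition map $|g_\ast|$ in the diagram $|H|$ is a homotopy equivalence. A filtered homotopy colimit of a diagram all of whose structure maps are weak equivalences is itself weakly equivalent, via any of the canonical vertex inclusions, to any given vertex. Thus the composite
\[ |\sH_N| \longrightarrow \hocolim_{\I^m_V} |H| \xrightarrow{\sim} \mathrm{colim}_{\I^m_V} |H| \cong |\F_V| \]
is a weak homotopy equivalence, and by construction it coincides with $|P_{(N,\phi)}|$.

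The one delicate point is the passage between strict and homotopy colimits; filteredness of $\I^m_V$, guaranteed precisely by the m-well-representedness hypothesis, is what makes this harmless, and it is also what eliminates any cofibrancy questions about the functors $g_\ast$. Together with the m-negligible case already handled, this completes the verification of $P_{(N,\phi)}$ in all cases required by Theorem~\ref{thm:qLoc}.
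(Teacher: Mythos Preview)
Your proof is correct and follows essentially the same approach as the paper. The paper compresses your argument into a single citation of \cite[Proposition 3, Corollary 1]{quillen}, which states precisely that for a filtered category $\I$ and a functor $F:\I\to\Cat$ all of whose transition maps are homotopy equivalences, each vertex map $F(A)\to\colim_\I F$ is a homotopy equivalence; applying this to $H:\I^m_V\to\Cat$ and invoking Lemma~\ref{lem:Hiso} finishes immediately. Your version simply unpacks Quillen's corollary into its constituent steps (nerve preserves filtered colimits, filtered colimits of spaces model homotopy colimits, vertex inclusions into a homotopy colimit of equivalences are equivalences).
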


\begin{proof}
  \cite[Proposition 3, Corollary 1]{quillen} states the following: given any
  filtered category $\C$ and a functor $F: \C \rto \Cat$ such that for all $f:A
  \rto B \in \C$, $F(f)$ is a homotopy equivalence.  Then the induced map $F(A)
  \rto \colim_\C F$ is a homotopy equivalence for all $A\in \C$.

  Applying this to the functor $H$, we get that the map $H(N,\phi) \rto
  \colim_{\I^m_V} H \cong \F_V$ is a homotopy equivalence for all $(N,\phi) \in
  \I^m_V$.  By definition this is exactly $P_{(N,\phi)}:\sH_N \rto \F_V$, and we
  are done.
\end{proof}

\appendix
\section{Checking that $\C\bs\A$ is a CGW-category} \label{app:C-A}

In this appendix we check as much as possible that the definition of $\C\bs\A$
gives a well-defined CGW-category.  More concretely, it is necessary to check
that the m-morphisms and e-morphisms give well-defined categories, that the
distinguished squares compose correctly, that $\phi$ exists, that $c$ and $k$
are equivalences of categories, and that axioms (Z), (I), (M), (K), and (A)
hold.

\begin{proposition} \label{prop:C-A-well-def}
  Let $\C$ be an ACGW-category, $\A$ a full ACGW-subcategory closed under
  subobjects, quotients, and extensions.  Then $\C\bs\A$ is a well-defined
  CGW-category assuming that the following condition holds:
  \begin{itemize}
  \item[(Ex)] The definitions of $c$ and $k$ in Definition~\ref{def:C-A} give
    equivalences of categories, in the sense that there exist equivalences of
    categories $k: \Ar_\square \E \rto \Ar_\triangle \M$ and $c: \Ar_\square \M
    \rto \Ar_\triangle \E$ which agree with the given definitions on objects.
  \end{itemize}
\end{proposition}

The rest of this appendix is a proof of this proposition.

As the definition of $\C\bs\A$ is symmetric with respect to e-morphisms and
m-morphisms it suffices to focus on proving only half of each statement; the
other half will follow by symmetry.

We first begin with a somewhat more explicit definition of the distinguished
squares in $\C\bs\A$.  These are generated by the following types of squares:
\begin{center}
  \begin{tabular}{ccccccc}
    $\dsq{A}{B}{C}{D}{}{}{}{}$
    & 
      \begin{inline-diagram}
        { A & B \\ C & D \\};
        \mDto{1-2}{1-1} \mDto{2-2}{2-1} \eto{1-1}{2-1} \eto{1-2}{2-2}
        \comm{1-1}{2-2}
        \end{inline-diagram}
    & 
      \begin{inline-diagram}
        { A & B \\ C & D \\};
        \mDto{1-1}{1-2} \mDto{2-1}{2-2} \eto{1-1}{2-1} \eto{1-2}{2-2}
        \comm{1-1}{2-2}
        \end{inline-diagram}
    &
      \begin{inline-diagram}
        { A & B \\ C & D \\};
        \eDto{1-2}{1-1} \eDto{2-2}{2-1} \eto{1-1}{2-1} \eto{1-2}{2-2}
        \end{inline-diagram}
    &
      \begin{inline-diagram}
        { A & B \\ C & D \\};
        \eDto{1-1}{1-2} \eDto{2-1}{2-2} \eto{1-1}{2-1} \eto{1-2}{2-2}
        \end{inline-diagram}
    \\
    \begin{inline-diagram}
      { A & B \\ C & D \\};
      \mto{1-1}{1-2} \mto{2-1}{2-2} 
      \eDto{2-1}{1-1} \eDto{2-2}{1-2}
      \comm{1-1}{2-2}
      \end{inline-diagram}
    &
      \begin{inline-diagram}
        { A & B \\ C & D \\};
        \mDto{1-2}{1-1} \mDto{2-2}{2-1} 
        \eDto{2-1}{1-1} \eDto{2-2}{1-2}
        \comm{1-1}{2-2}
        \end{inline-diagram}
    &
      \begin{inline-diagram}
        { A & B \\ C & D \\};
        \mDto{1-1}{1-2} \mDto{2-1}{2-2} 
        \eDto{2-1}{1-1} \eDto{2-2}{1-2}
        \comm{1-1}{2-2}
        \end{inline-diagram}
    &
      \begin{inline-diagram}
        { A & B \\ C & D \\};
        \eDto{1-2}{1-1} \eDto{2-2}{2-1} 
        \eDto{2-1}{1-1} \eDto{2-2}{1-2}
        \end{inline-diagram}
    &
      \begin{inline-diagram}
        { A & B \\ C & D \\};
        \eDto{1-1}{1-2} \eDto{2-1}{2-2} 
        \eDto{2-1}{1-1} \eDto{2-2}{1-2}
        \end{inline-diagram}
    \\
    \begin{inline-diagram}
      { A & B \\ C & D \\};
      \mto{1-1}{1-2} \mto{2-1}{2-2} 
      \eDto{1-1}{2-1} \eDto{1-2}{2-2} \comm{1-1}{2-2}
      \end{inline-diagram}
    &
      \begin{inline-diagram}
        { A & B \\ C & D \\};
        \mDto{1-2}{1-1} \mDto{2-2}{2-1} 
        \eDto{1-1}{2-1} \eDto{1-2}{2-2} \comm{1-1}{2-2}
        \end{inline-diagram}
    &
    &
    &
    \\
    \begin{inline-diagram}
      { A & B \\ C & D \\};
      \mto{1-1}{1-2} \mto{2-1}{2-2} 
      \mDto{2-1}{1-1} \mDto{2-2}{1-2}
      \end{inline-diagram}
    &
      \begin{inline-diagram}
        { A & B \\ C & D \\};
        \mDto{1-2}{1-1} \mDto{2-2}{2-1} 
        \mDto{2-1}{1-1} \mDto{2-2}{1-2}
        \end{inline-diagram}
    &
    &
      \begin{inline-diagram}
        { A & B \\ C & D \\};
        \eDto{1-2}{1-1} \eDto{2-2}{2-1} 
        \mDto{2-1}{1-1} \mDto{2-2}{1-2}
        \comm{1-1}{2-2}
        \end{inline-diagram}
    &
      \begin{inline-diagram}
        { A & B \\ C & D \\};
        \eDto{1-1}{1-2} \eDto{2-1}{2-2} 
        \mDto{2-1}{1-1} \mDto{2-2}{1-2}
        \comm{1-1}{2-2}
        \end{inline-diagram} 
    \\
    \begin{inline-diagram}
      { A & B \\ C & D \\};
      \mto{1-1}{1-2} \mto{2-1}{2-2} 
      \mDto{1-1}{2-1} \mDto{1-2}{2-2}
      \end{inline-diagram}
    &
      \begin{inline-diagram}
        { A & B \\ C & D \\};
        \mDto{1-2}{1-1} \mDto{2-2}{2-1} 
        \mDto{1-1}{2-1} \mDto{1-2}{2-2}
        \end{inline-diagram}
    &
    &
      \begin{inline-diagram}
        { A & B \\ C & D \\};
        \eDto{1-2}{1-1} \eDto{2-2}{2-1} 
        \mDto{1-1}{2-1} \mDto{1-2}{2-2}
        \comm{1-1}{2-2}
        \end{inline-diagram}
    &
      \begin{inline-diagram}
        { A & B \\ C & D \\};
        \eDto{1-1}{1-2} \eDto{2-1}{2-2} 
        \mDto{1-1}{2-1} \mDto{1-2}{2-2}
        \comm{1-1}{2-2}
        \end{inline-diagram}
  \end{tabular}
\end{center}

We now prove a series of lemmas about how different types of squares in $\C$
interact.  The common consequence of all of these lemmas is that the given
squares fit into a cube with opposite sides of the same ``type'' (be that
pseudo-commutative squares, distinguished squares, or simply squares that commute
inside $\E$ or $\M$).  We do not worry about which arrows have $c$ or $k$ in
$\A$; the properties of $\A$ ensure that whenever such an arrow is ``pulled
back'', the pullback also has $c$ or $k$ in $\A$.  

\begin{lemma} \label{lem:Ccube}
  Given two diagrams in $\C$
  \begin{diagram}
    { A & B & A' & \qquad & X & C'\\
      C & D & C' & & C & D \\};
    \mto{1-1}{2-1} \mto{1-2}{2-2} \mto{1-3}{2-3}
    \eto{1-5}{2-5} \eto{1-6}{2-6}
    \eto{1-1}{1-2} \eto{1-3}{1-2} \eto{2-1}{2-2} \eto{2-3}{2-2}
    \eto{1-5}{1-6} \eto{2-5}{2-6}
    \comm{1-1}{2-2} \comm{1-2}{2-3}
  \end{diagram}
  we can assemble these into a cube
  \begin{diagram}
    {X' & & X \\
      & A & & C \\
      A' & & C' \\
      & B & & D\\};
    \mto{1-1}{1-3} \eto{1-3}{2-4} \eto{2-4}{4-4}
    \mto{4-2}{4-4} \mto{3-1}{3-3}
    \eto{1-1}{2-2} \eto{3-1}{4-2} \eto{3-3}{4-4}
    \eto{1-1}{3-1} \eto{1-3}{3-3}
    \over{2-2}{2-4} \over{2-2}{4-2}
    \mto{2-2}{2-4} \eto{2-2}{4-2}
  \end{diagram}
  in which all faces with mixed morphisms are pseudo-commutative.   If $ABCD$ was
  originally distinguished, then $X'A'XC'$ will be, as well.
  
  An analogous statement with the roles of e-morphisms and m-morphisms swapped
  also holds.
\end{lemma}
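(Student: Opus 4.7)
The plan is to construct the apex vertex $X'$ as a mixed pullback via axiom (U) and then identify the remaining edges and faces by exploiting the functoriality of this construction together with the equivalences $c$ and $k$.

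First, form
\[X' \defeq A \oslash_C X\]
using axiom (U) applied to the diagram $A \mrto C \elto X$: the m-morphism comes from the back input square, and the e-morphism $X \erto C$ from the right input square. This immediately supplies the edges $X' \mrto X$ and $X' \erto A$, together with the top face $X'XCA$ of the cube, which is commutative by the definition of $\oslash$.

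The third new edge $X' \erto A'$ requires identifying $A \oslash_C X$ with $A' \oslash_{C'} X$. To do so, observe that the commutative square $ABCD$ together with axiom (U) exhibits $A$ as (isomorphic to) $B \oslash_D C$, and similarly the bottom square exhibits $A'$ as $B \oslash_D C'$. An associativity-of-mixed-pullback argument then yields
\[A \oslash_C X \;=\; (B \oslash_D C) \oslash_C X \;\cong\; B \oslash_D X,\]
and symmetrically $A' \oslash_{C'} X \cong B \oslash_D X$, where the e-morphism $X \erto D$ is the composition $X \erto C \erto D$ (equivalently $X \erto C' \erto D$; these agree by commutativity of the right e-face). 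The associativity itself is obtained by rewriting both sides via the formulas from axiom (U) and applying the equivalences $c$ and $k$ together with axiom (P) (closure of $\M$ and $\E$ under pullback). Transporting $A' \oslash_{C'} X \erto A'$ through the canonical isomorphism defines the edge $X' \erto A'$ and simultaneously realizes the front face $X'A'C'X$ as the defining mixed pullback square for $A' \mrto C' \elto X$, hence commutative.

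The remaining faces follow easily. The back, bottom, and right faces are input data; the top and front are the two mixed pullback squares constructed above; and the left face $X'ABA'$ (all e-morphisms) commutes because both composites $X' \erto A \erto B$ and $X' \erto A' \erto B$ coincide with the canonical morphism $X' \erto B$ arising from the identification $X' \cong B \oslash_D X$ followed by $B \oslash_D X \erto B$. For the distinguished case, if $ABCD$ is distinguished then the identification $A \cong B \oslash_D C$ lifts to a distinguished-square identification (axiom (K) ensures $c$ sends distinguished squares to commutative triangles, which promote the pullback identifications used above to distinguished-square identifications), and hence the front face $X'A'XC'$ inherits distinguished-square structure. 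The main obstacle is the associativity isomorphism $A \oslash_C X \cong B \oslash_D X$ used throughout; once this mixed-pullback associativity is established from axiom (U), the rest of the cube falls into place.
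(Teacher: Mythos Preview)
Your approach is correct and is essentially a repackaging of the paper's own argument in the language of $\oslash$. The paper's proof applies $c$ to the two input commutative squares, turning them into pullback squares in $\E$ (this is exactly axiom~(U)), sets the apex to be $A^c\times_C X$, and then shows by a diagram chase in $\E$ that the map $A^c\times_C X \to (A')^c$ exists and makes the remaining face a pullback; applying $k$ then returns the desired cube. Your ``associativity of mixed pullbacks'' step $(B\oslash_D C)\oslash_C X \cong B\oslash_D X$ unwinds, once you rewrite $\oslash$ via its definition $(\,\cdot\,)^c\times(\,\cdot\,)$ followed by $k$, to precisely this pullback computation in $\E$: $(B^c\times_D C)\times_C X \cong B^c\times_D X$. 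So the two proofs are the same computation, yours carried out one level up.

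Two comments. First, the paper's route is slightly more economical: by passing to $\E$ immediately, the associativity you need becomes ordinary pullback associativity and needs no separate justification, whereas you have to argue it from the formulas in axiom~(U). Second, your treatment of the distinguished case is imprecise. The phrase ``lifts to a distinguished-square identification'' does not name a concrete step. What actually has to be checked is that the back face, after applying $c$, yields an isomorphism $(X')^{c/X}\to (A')^{c/C'}$; in the paper's variables this is the assertion that $A^c\times_C X \to (A')^c$ is an isomorphism whenever $A^c\to B^c$ is. This follows from the same pullback identifications you already use (both sides become $B^c\times_D X$), but you should say so explicitly rather than gesture at axiom~(K).
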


\begin{proof}
  Apply $c$ to the left-hand diagram.  This turns both of the squares into
  pullback squares in $\E$ (by definition). We can then form the following
  diagram:
  \begin{diagram}
    { A^c \times_C X &  & X \\
      & A^c & & C \\
      (A')^c & & C' \\
      & B^c & & D \\};
    \eto{1-3}{2-4} \eto{1-3}{3-3} \eto{2-4}{4-4} \eto{3-3}{4-4}
    \eto{3-1}{3-3} \eto{3-1}{4-2} \eto{4-2}{4-4}
    \over{2-2}{2-4} \over{2-2}{4-2} \eto{2-2}{2-4} \eto{2-2}{4-2}
    \eto{1-1}{1-3} \eto{1-1}{2-2}
  \end{diagram}
  To prove the main statement of the lemma it suffices to show that a morphism
  $A^c \times_C X \rto (A')^c$ exists and makes the back face into a pullback.
  To show the last stement it suffices to show that if $A^c \rto B^c$ is an
  isomorphism then this morphism is also an isomorphism.  This is a
  straightforward diagram chase using the fact that all solid faces in the above
  diagram are pullbacks and all morphisms in $\E$ are monic.
\end{proof}

As a corollary we can see that assembling distinguished squares and pullbacks
commutes:
\begin{corollary} \label{cor:comp}
  Suppose that we are given a diagram
  \[A \mrto B \erto C \elto D.\]
  The two diagrams
  \[\begin{inline-diagram}
    { A & B & B\times_C D\\
      X & C & D \\};
    \mto{1-1}{1-2} \mto{2-1}{2-2}
    \eto{1-1}{2-1} \eto{1-2}{2-2} \dist{1-1}{2-2}
    \eto{2-3}{2-2} \eto{1-3}{2-3} \eto{1-3}{1-2}
  \end{inline-diagram}
  \qquad\hbox{and}\qquad
  \begin{inline-diagram}
    { A \oslash_B (B\times_C D) & B\times_C D & D\\
      A & B & C \\};
    \eto{2-2}{2-3} \eto{1-3}{2-3}
    \mto{2-1}{2-2} \eto{1-2}{2-2} \eto{1-2}{1-3}
    \eto{1-1}{2-1} \mto{1-1}{1-2} \comm{1-1}{2-2}
  \end{inline-diagram}\]
  fit into a cube
  \begin{diagram}
    {A & & B \\
      & X & & C \\
      A\oslash_B (B\times_C D) & & B\times_C D \\
      & W & & D\\};
    \mto{1-1}{1-3} \eto{1-3}{2-4} \eto{4-4}{2-4}
    \mto{4-2}{4-4} \mto{3-1}{3-3}
    \eto{1-1}{2-2} \eto{3-1}{4-2} \eto{3-3}{4-4}
    \eto{3-1}{1-1} \eto{3-3}{1-3}
    \over{2-2}{2-4} \over{2-2}{4-2}
    \mto{2-2}{2-4} \eto{4-2}{2-2}
  \end{diagram}
  in which the top and bottom face are distinguished squares, the front and the
  back face are pseudo-commutative squares, and the right and left face are commutative
  in $\E$ with the right-hand face a pullback. 
\end{corollary}

We now prove a ``complement'' to Lemma~\ref{lem:(B)}: instead of assuming that a
commutative square in $\E$ is attached to the back of a pseudo-commutative square, we
assume that it is attached to the front:
\begin{lemma} \label{lem:lem**}
  Suppose that we are given a diagram
  \begin{diagram}
    { A & B & B' \\
      C & D & D' \\};
    \mto{1-1}{1-2} \mto{2-1}{2-2}
    \eto{1-1}{2-1} \eto{1-2}{2-2}
    \eto{1-2}{1-3} \eto{2-2}{2-3} \eto{1-3}{2-3}
    \comm{1-1}{2-2}
  \end{diagram}
  Then this diagram assembles into a cube
  \begin{diagram}
    { A && B \\
      & A' && B'\\
      C && D \\
      & C' && D'\\};
    \mto{1-1}{1-3} \mto{3-1}{3-3} \eto{1-1}{3-1} \eto{1-3}{3-3}
    \over{2-2}{2-4} \mto{2-2}{2-4} \over{2-2}{4-2} \eto{2-2}{4-2}
    \mto{4-2}{4-4} \eto{2-4}{4-4}
    \eto{1-1}{2-2} \eto{1-3}{2-4}
    \eto{3-1}{4-2} \eto{3-3}{4-4}
  \end{diagram}
  where the front, back, and top faces are pseudo-commutative and the bottom face is
  distinguished.  If the right-hand square is a pullback then the top face will
  also be distinguished.

  The dual statement also holds. 
\end{lemma}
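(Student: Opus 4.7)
The strategy is to build the cube stepwise, using Lemma~\ref{lem:axiomC} to supply the top and bottom distinguished squares, and Axiom~(U) to produce the remaining front-to-back edge $A' \erto C'$ together with the required commutativity data; the argument parallels the style of Lemma~\ref{lem:(B)} and Lemma~\ref{lem:Ccube}.

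First, apply Lemma~\ref{lem:axiomC} to the composition $C \mrto D \erto D'$ to obtain the unique distinguished square $\dsq{C}{D}{C'}{D'}{}{}{}{}$; this defines $C'$, $C \erto C'$, and $C' \mrto D'$, supplying the bottom face. Symmetrically, apply Lemma~\ref{lem:axiomC} to $A \mrto B \erto B'$ to obtain $\dsq{A}{B}{A'}{B'}{}{}{}{}$, defining $A'$, $A \erto A'$, and $A' \mrto B'$, and supplying the top face. Axiom~(K) applied to these two distinguished squares furnishes canonical isomorphisms $(A')^{c/B'} \cong A^{c/B}$ and $(C')^{c/D'} \cong C^{c/D}$, which will provide the link between the front and back of the cube.

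Now apply the extension of $c$ given by Axiom~(U) to the back commutative square; this produces a pullback in $\E$
\[\csq{A^{c/B}}{C^{c/D}}{B}{D}{\mu}{}{}{}\]
with an induced e-morphism $\mu \colon A^{c/B} \erto C^{c/D}$. Composing $\mu$ with the right square (commutative in $\E$) yields a commutative rectangle in $\E$ whose columns are the composites $A^{c/B} \erto B \erto B'$ and $C^{c/D} \erto D \erto D'$. Using the identifications from the previous paragraph and inverting through $c^{-1}$, this rectangle gives rise to a commutative front face $\csq{A'}{B'}{C'}{D'}{}{}{}{}$, from which the desired e-morphism $A' \erto C'$ is extracted. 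The left face commutativity follows by tracing the same identifications through the $c/c^{-1}$ equivalence.

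For the final clause: if the right-hand square is itself a pullback in $\E$, then the composite of the back's $c$-image with the right square is still a pullback in $\E$, and applying $c^{-1}$ realizes the top face as a distinguished square, consistent with the construction above. The main obstacle is the diagram chase in the penultimate step: extracting $A' \erto C'$ from the $c^{-1}$ procedure and simultaneously verifying the commutativity of the front and left faces requires careful bookkeeping of how the $c$/$k$ equivalences of Axioms~(K) and~(U) interact with the distinguished squares produced by Lemma~\ref{lem:axiomC}.
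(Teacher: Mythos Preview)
Your construction of $A'$ is the source of a genuine gap. You define $A'$ by applying Lemma~\ref{lem:axiomC} to $A \mrto B \erto B'$, which forces the top face to be a \emph{distinguished} square and hence $(A')^{c/B'} \cong A^{c/B}$. But the lemma only asserts that the top face is commutative; it is distinguished only in the special case where the right-hand square is a pullback. With your choice of $A'$, the front face will generally fail to be commutative: for the front to be commutative one needs $(A')^{c/B'} \cong B' \times_{D'} (C')^{c/D'}$, i.e.\ $A^{c/B} \cong B' \times_{D'} C^{c/D}$, whereas the back face only gives $A^{c/B} \cong B \times_D C^{c/D}$. These agree precisely when the right-hand square is a pullback, and not otherwise.

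This surfaces concretely in your ``inverting through $c^{-1}$'' step. The equivalence $c^{-1}$ of Axiom~(U) carries \emph{pullback} squares in $\E$ to commutative squares; a merely commutative rectangle in $\E$ is not in its domain. The composite of the back's $c$-image with the right-hand square is a pullback only when the right-hand square is, so your argument actually proves only the final clause of the lemma, not the main statement. The paper proceeds in the opposite order: it defines $A' = (B' \times_{D'} (C')^c)^{k/B'}$ so that the front face is commutative \emph{by construction}, and then checks that the map $A \erto A'$ exists making the left face commute in $\E$ and the top face commutative (this is the diagram chase showing $A^{c/B} \erto B' \times_{D'} (C')^c$ exists with the right-hand square a pullback). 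The distinguished-top conclusion then drops out in the pullback case.
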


\begin{proof}
  Define $C'$ so that the bottom face of the cube is a distinguished square.
  Define $A' = C'\oslash_{D'} B'$.  By definition this produces a
  diagram where the front face is pseudo-commutative and the bottom face is
  distinguished.  It therefore suffices to check that there exists a morphism $A 
  \erto A'$ such that the left face commutes in $\E$ and the top face is
  pseudo-commutative. To prove this it suffices to check that there exists a morphism
  $A^{c/B} \erto B'\times_{D'} (C')^c$ such that in the diagram
  \begin{diagram}
    { C^{c/D} & A^{c/B} & B \\
      (C')^c & (C')^c\times_{D'} B' & B' \\};
    \eto{1-2}{1-1} \eto{1-2}{1-3}
    \eto{2-2}{2-1} \eto{2-2}{2-3}
    \eto{1-1}{2-1} \eto{1-2}{2-2} \eto{1-3}{2-3}
  \end{diagram}
  the left-hand square commutes and the right-hand square is a pullback.  This
  follows directly from the definitions.
\end{proof}

We are now ready to turn our attention to proving that $\C\bs\A$ is a CGW-category.

\noindent
\textbf{The m-morphisms form a well-defined category}

The m-morphisms in $\C\bs\A$ are defined to be equivalence classes of diagrams
\[ A \eDlto A' \mDlto X\eDrto B'\mrto B.\] The equivalence relation is generated
by the following types of diagrams (up to isomorphism), where the red diagram is
declared to be equivalent to the blue diagram:
\begin{diagram-numbered}{diag:single-eq}
  { & & & B \\
    & & B'' & B' \\
    & A'' & X' & X' \oslash_{B''} B' \\
    A & A' & X'\oslash_{A''} A' & X \\};
  \node (m-100-100) at (m-4-4) {\phantom{X}};
  
  \diagArrow{emor,Dmor,red!70!black}{4-2}{4-1}
  \diagArrow{mmor,Dmor,red!70!black,bend left}{100-100}{4-2}
  \diagArrow{emor,Dmor,red!70!black,bend right}{100-100}{2-4}
  \diagArrow{mmor,red!70!black}{2-4}{1-4}
  \diagArrow{emor,Dmor,blue!70!black}{3-2}{4-1}
  \diagArrow{mmor,Dmor,blue!70!black}{3-3}{3-2}
  \diagArrow{emor,Dmor,blue!70!black}{3-3}{2-3}
  \diagArrow{mmor,blue!70!black}{2-3}{1-4}
  \eDto{4-2}{3-2} \eDto{4-3}{3-3}
  \mDto{4-3}{4-2} \mDto{4-4}{4-3}
  \eDto{4-4}{3-4} \mDto{3-4}{3-3}
  \eDto{3-4}{2-4} \mDto{2-4}{2-3}
  \comm{2-3}{3-4} \comm{3-2}{4-3} \comm{3-3}{4-4}
\end{diagram-numbered}
The relation defined between m-morphisms is a formal composition of
two such relations, one inverse to another.  Thus to show that the relation is
well-defined we must check that if we are given two such relations built on top
of one another, then either they compose to a single one, or that we can ``pull
back'' two such relations.

Let us consider the first such case.  Suppose that we are given two such
diagrams, one relating $A \eDlto A' \mDlto X \eDrto B' \mrto B$ to
$A \eDlto A'' \mDlto X' \eDrto B'' \mrto B$, and one relating
$A \eDlto A'' \mDlto X' \eDrto B'' \mrto B$ to
$A \eDlto A''' \mDlto X'' \eDrto B''' \mrto B$.    We can rearrange this data into the
following diagram, where the first formal composition is in red, the second is
in blue, and the third is in green:
\begin{diagram}
  { & & & & B\\
    & & B''' & B'' & B' \\
    & A''' & X'' & B''\oslash_{B'''}X'' & B'\oslash_{B'''}X''\\
    & A'' & X'''\oslash_{A'''}A'' & X' & B'\oslash_{B''}X'\\
    A & A' & X''\oslash_{A'''}A' &  X'\oslash_{A''}A' & X\\};
  \node (m-100-100) at (m-5-5) {$\phantom{X}$};
  \diagArrow{red!70!black,emor,Dmor}{5-2}{5-1}
  \diagArrow{red!70!black,mmor,Dmor,bend left,densely dashed}{100-100}{5-2};
  \diagArrow{red!70!black,emor,Dmor,bend right,densely dashed}{100-100}{2-5};
  \diagArrow{red!70!black,mmor}{2-5}{1-5}

  \node (m-99-99) at (m-4-4) {$\phantom{X'}$};
  \diagArrow{blue!70!black,emor,Dmor}{4-2}{5-1}
  \diagArrow{blue!70!black,mmor,Dmor,bend left,densely dashed}{99-99}{4-2}
  \diagArrow{blue!70!black,emor,Dmor,bend right,densely dashed}{99-99}{2-4}
  \diagArrow{blue!70!black,mmor}{2-4}{1-5}

  \diagArrow{green!70!black,emor,Dmor}{3-2}{5-1}
  \diagArrow{green!70!black,mmor,Dmor}{3-3}{3-2}
  \diagArrow{green!70!black,emor,Dmor}{3-3}{2-3}
  \diagArrow{green!70!black,mmor}{2-3}{1-5}

  \eDto{4-2}{3-2} \eDto{5-2}{4-2} \eDto{4-3}{3-3} \eDto{5-3}{4-3}
  \mDto{4-3}{4-2} \mDto{5-3}{5-2}
  \eDto{3-4}{2-4} \eDto{4-4}{3-4} \eDto{5-4}{4-4}
  \eDto{5-5}{4-5} \eDto{4-5}{3-5} \eDto{3-5}{2-5}
  \mDto{2-4}{2-3} \mDto{2-5}{2-4}
  \mDto{3-5}{3-4} \mDto{3-4}{3-3}
  \mDto{4-5}{4-4} \mDto{4-4}{4-3}
  \mDto{5-5}{5-4} \mDto{5-4}{5-3}

  \comm{2-3}{3-4} \comm{2-4}{3-5}
  \comm{3-2}{4-3} \comm{3-3}{4-4} \comm{3-4}{4-5}
  \comm{4-2}{5-3} \comm{4-3}{5-4} \comm{4-4}{5-5}  
\end{diagram}
By regrouping the pseudo-commutative squares, we see that the red composition is
equivalent to the green composition, as desired.

To show the second case, consider the following diagram, which shows that red
and blue are both equivalent to green:
\begin{diagram}
  { & & & X \\
    & A' & X'' \oslash_{A'''} A' & & B' \oslash_{B'''} X'' & B'\\
    A & A''' & & X'' & & B''' & B \\
    & A'' & X'' \oslash_{A'''} A'' & & B'' \oslash_{B'''} X'' & B'' \\
    & & & X \\};
  \eDto{2-2}{3-2} \eDto{2-3}{3-4} \mDto{2-5}{3-4} \mDto{2-6}{3-6}
  \eDto{4-2}{3-2} \eDto{4-3}{3-4} \mDto{4-5}{3-4} \mDto{4-6}{3-6}
  \mDto{2-3}{2-2} \eDto{2-5}{2-6}
  \mDto{4-3}{4-2} \eDto{4-5}{4-6}
  \eDto{1-4}{2-5} \mDto{1-4}{2-3}
  \eDto{5-4}{4-4} \mDto{5-4}{4-3}
  \comm{1-4}{3-4} \comm{3-4}{5-4}
  \comm{2-3}{3-2} \comm{4-3}{3-2} \comm{2-5}{3-6} \comm{4-5}{3-6}

  \diagArrow{emor, Dmor, green!70!black}{3-2}{3-1}
  \diagArrow{emor, Dmor, green!70!black}{3-4}{3-6}
  \diagArrow{mmor, Dmor, green!70!black}{3-4}{3-2}
  \diagArrow{mmor, green!70!black}{3-6}{3-7}
  \diagArrow{emor,Dmor, red!70!black}{2-2}{3-1}
  \diagArrow{mmor,Dmor, red!70!black}{1-4}{2-2}
  \diagArrow{emor,Dmor, red!70!black}{1-4}{2-6}
  \diagArrow{mmor, red!70!black}{2-6}{3-7}
  \diagArrow{emor,Dmor, blue!70!black}{4-2}{3-1}
  \diagArrow{mmor,Dmor, blue!70!black}{5-4}{4-2}
  \diagArrow{emor,Dmor, blue!70!black}{5-4}{4-6}
  \diagArrow{mmor, blue!70!black}{4-6}{3-7}
\end{diagram}
Then the composition
\[A \eDlto A'\times_{A'''} A'' \mDlto  ( (B'\times_{B'''}
  B'')  \oslash_{B'''} X'' ) \oslash_{X''} (X'' \oslash_{A'''}  (A'\times_{A'''} A'') )
   \eDrto
  B'\times_{B'''} B'' \mrto B
\]
is equivalent to both the red and the blue, completing the desired picture.
Putting these together shows that the relation defined on m-morphisms is an
equivalence relation, as desired.

Now we can work with the definition of the m-morphisms directly.  Given two
morphisms $A \msrto B$ and $B \msrto C$ their composition is defined to be
represented by the diagonal in the following square:
\begin{diagram}
  {A & A' & X & B' & B \\
    & A'' & X\times(B'\oslash_B B'') & B'\oslash_B B'' & B'' \\
    & & Z & (B'\oslash_B B'')\times Y & Y \\
    & & & C'' & C' \\
    & & & & C\\};
  \eDto{1-2}{1-1} \mDto{1-3}{1-2} \eDto{1-3}{1-4} \mto{1-4}{1-5}
  \eDto{2-5}{1-5} \mDto{3-5}{2-5} \eDto{3-5}{4-5} \mto{4-5}{5-5}
  \mto{2-4}{2-5} \eDto{2-4}{1-4} \comm{1-4}{2-5}
  \eDto{2-3}{2-4} \eDto{2-3}{1-3}
  \mDto{2-3}{2-2} \eDto{2-2}{1-2} \dist{1-2}{2-3}
  \mDto{3-4}{2-4} \mto{3-4}{3-5}
  \mDto{3-3}{2-3} \eDto{3-3}{3-4} \comm{2-3}{3-4}
  \eDto{3-4}{4-4} \mto{4-4}{4-5} \dist{3-4}{4-5}
  \eDto{2-2}{1-1} \mDto{3-3}{2-2} \eDto{3-3}{4-4} \mto{4-4}{5-5}
\end{diagram}
Here, $Z = (X\times (B'\oslash_B B''))\oslash_{B'\oslash_BB''}
((B'\oslash_BB'')\times Y)$ and $A''$ and $C''$ are uniquely determined by the
distinguished squares they are in.

To check that this is well-defined, it suffices to check that given a diagram as
in (\ref{diag:single-eq}) and a morphism represented as one of $\eDlto$,
$\mDlto$, $\eDrto$ or $\mrto$ the composition (resp. precomposition) with the
red morphism and the composition (resp. precomposition) with the blue morphism
are equivalent.  We check the case of composing with a morphism represented by
$\eDlto$; all of the other cases are analogous.  This is a straightforward
diagram chase, using Lemma~\ref{lem:Ccube} to push the diagram showing the
equivalence of the two representations along the composition; the only
nontrivial part is ensured by Lemma~\ref{lem:(B)}.

We need to check that composition is associative.  As a morphism is a formal
composition of four arrows, it suffices to check that compositions of those
component arrows is associative.  It is not necessary to worry about which
morphisms have kernel/cokernel in $\A$, since that is preserved by the
definition of composition; all we are checking is associativity.  Thus our
definition of morphism is symmetric in e-morphism and m-morphism.  In addition,
since both $\E$ and $\M$ are closed under pullbacks, by standard arguments about
span categories we know that when all three morphisms are e-morphisms or all
three morphisms are m-morphisms composition is associative.  Thus it remains to
consider the case of 2 m-morphisms and 1 e-morphism or 1 m-morphism and 2
e-morphisms.  By symmetry again it suffices to consider this second case, and,
in fact, it suffices to consider the case when the m-morphism is directed
covariantly with the composition.

Now there are $12$ cases left (three positions for the m-morphism and four
directions in which the e-morphisms can point).  Most of these have only a
single composition, so associativity holds automatically for these.  The
remaining three cases are $\mrto \erto \erto$, $\mrto \elto \elto$ and
$\mrto \erto \elto$.  The first and second of these give associative
compositions because distinguished and pseudo-commutative squares work correctly with
respect to composition.  Thus the last case is the only one of interest, which
directly follows from Corollary~\ref{cor:comp}.  The fact that the two different
compositions assemble into a cube implies that they are equivalent in $\C\bs\A$.

\noindent\textbf{Distinguished squares compose correctly}  This is true by
definition.

\noindent\textbf{There exists a $\phi$}  We must show that the subcategory of
m-isomorphisms is isomorphic to the category of e-isomorphisms by a functor
which takes objects to themselves.  To construct this functor, use
Lemma~\ref{lem:axiomC} to change a representation of an m-isomorphism as
\[A \eDlto A' \mDlto X \eDrto B' \mDrto B\]
to
\[A \mDlto A'' \eDlto X \mDrto B'' \eDrto B,\] which gives a representation of
an e-isomorphism.  Since distinguished squares are unique up to unique
isomorphism, this is an isomorphism of categories.

\noindent\textbf{Axiom (Z)} We must check that $\initial$ is initial in $\M$.  

There exists a morphism $\initial \msrto B$ for any $B$ by simply taking the
representation where all but the last morphism are the identity. We must now
check that this morphism is unique.  Suppose that we are given any diagram
\[\initial \eDlto \initial \mDlto \initial \eDrto B' \mrto B.\]
We must have $B'\in \A$ for this diagram to be valid.  The diagram
\begin{diagram}
  { & & & B \\
    & & B' & \initial \\
    & \initial & \initial & \initial \\
    \initial & \initial & \initial & \initial \\};
  \node (m-100-100) at (m-4-4) {\phantom{X}};
  
  \eDto{4-2}{3-2} \eDto{4-3}{3-3}
  \mDto{4-3}{4-2} \mDto{4-4}{4-3}
  \eDto{4-4}{3-4} \mDto{3-4}{3-3}
  \eDto{3-4}{2-4} \mDto{2-4}{2-3}
  \comm{2-3}{3-4} \comm{3-2}{4-3} \comm{3-3}{4-4}
  \eDto{4-2}{4-1} \eDto{3-2}{4-1} \mDto{3-3}{3-2}
  \eDto{3-3}{2-3} \mDto{2-3}{1-4} \mDto{2-4}{1-4}
\end{diagram}
shows that the two are equivalent.  Thus $\initial$ is horizontally initial.

\noindent\textbf{Axiom (I)} The m-morphisms which are isomorphisms are
exactly those morphisms of the form
\[A \eDlto \mDlto \eDrto \mDrto B.\]
Using this description and the listing of different kinds of distinguished
squares we can construct each of the required squares by hand.

\noindent\textbf{Axiom (M)} It suffices to check this for the m-morphisms of
$\C\bs\A$; the statement for the e-moprhisms will follow by symmetry.  Thus we
want to check that if we are given two morphisms $f,g: A \msrto B$ and a
morphism $h: B \msrto C$ in $\C\bs\A$ then if $h f = h g$ then $f = g$.  All
morphisms in $\M$ are equal, up to isomorphism, to ones represented by diagrams
$\dotp \mrto \dotp$.  Thus it suffices to assume that $h$ is of this form.  This
means that the compositions $hf$ and $hg$ are computed simply by composing the
last m-moprhism components.

The fact that $hf = hg$ implies that for any choice of representatives for $f$
and $g$, the following diagram exists:
\begin{diagram}
  {& & \dotp && B'  \\
    & \dotp && \dotp && C' \\
    A && \dotp && \dotp && C && B \\
    & \dotp && \dotp && C'' \\
    & & \dotp && B'' \\};
  \eDto{2-2}{3-1} \mDto{1-3}{2-2} \eDto{1-3}{1-5} \diagArrow{mmor,bend left}{1-5}{3-7}
  \eDto{4-2}{3-1} \mDto{5-3}{4-2} \eDto{5-3}{5-5} \diagArrow{mmor,bend right}{5-5}{3-7}
  \eDto{3-3}{2-2} \eDto{3-3}{4-2}
  \mDto{3-5}{2-4} \mDto{3-5}{4-4} \mDto{4-4}{3-3} \mDto{2-4}{3-3}
  \eDto{3-5}{2-6} \mto{2-6}{3-7}
  \eDto{3-5}{4-6} \mto{4-6}{3-7}
  \eDto{2-4}{1-3} \comm{2-2}{2-4} \eDto{2-4}{1-5}
  \mDto{2-6}{1-5} \comm{2-4}{2-6}
  \eDto{4-4}{5-3} \comm{4-2}{4-4} \eDto{4-4}{5-5} \mDto{4-6}{5-5}
  \comm{4-4}{4-6}
  \to{2-6}{4-6}^{\cong}
  \mto{3-9}{3-7}_h \diagArrow{mmor,bend left}{1-5}{3-9} \diagArrow{mmor,bend right}{5-5}{3-9}

  \node (m-100-100) at (m-3-9) {$\phantom{B}$};
  \diagArrow{densely dashed,->,red!70!black,out=90,in=90}{3-1}{100-100}^f
  \diagArrow{densely dashed,->,blue!70!black,out=270,in=270}{3-1}{100-100}_g
\end{diagram}
To show that $f = g$ it suffices to check that there exist maps $C' \mrto B$ and
$C'' \mrto B$ such that the triangle
\begin{squisheddiagram}
  { C' \\
    & B \\
    C'' \\};
  \to{1-1}{3-1}_\cong \mto{1-1}{2-2} \mto{3-1}{2-2}
\end{squisheddiagram}
commutes.  Setting these maps to be the evident ones generated by the above
diagram, we see that the given triangle must commute, as it commutes after
postcomposition with $h$ and $h$ is monic.

\noindent\textbf{Axiom (K)} As before, we prove this only for $c$; the result for $k$
follows by symmetry.

Let $f: A \msrto B$ be a morphism.  Given a representative
\[A \eDlto A' \mDlto X \eDrto B' \mrto B\]
of $f$, we can conclude that $c(f) \cong (B')^c \erto B$.  Thus if we can show
that a distinguished square as desired exists for this representative, we will
be done.  The following diagram shows that this is the case
\begin{diagram}
  { \initial & \initial & \initial & \initial & (B')^c \\
    A & A' & X & B' & B \\};
  \eDto{1-2}{1-1} \mDto{1-3}{1-2} \eDto{1-3}{1-4} \mto{1-4}{1-5}
  \eDto{2-2}{2-1} \mDto{2-3}{2-2} \eDto{2-3}{2-4} \mto{2-4}{2-5}
  \eto{1-1}{2-1} \eto{1-2}{2-2} \eto{1-3}{2-3} \eto{1-4}{2-4} \eto{1-5}{2-5}
  \dist{1-4}{2-5} \comm{1-2}{2-3}
\end{diagram}
as it is a composition of squares which are distinguished in $\C\bs\A$.

\noindent\textbf{Axiom (A)}  This holds because it holds inside $\C$ and all
distinguished squares in $\C$ are also distinguished in $\C\bs\A$. \qed

\bibliographystyle{amsalpha}
\bibliography{CZ}

\end{document}